\newtheorem{theorem}{Theorem}[section]
\newtheorem{lemma}[theorem]{Lemma}
\theoremstyle{definition}
\newtheorem*{remark}{Remark}
\numberwithin{equation}{section}
\title[Sign-changing solutions for Kirchhoff type problems]
      {Infinitely many sign-changing solutions for Kirchhoff type problems in $\mathbb{R}^3$}
\author[Jijiang Sun, Lin Li,
M. Cencelj, B. Gabrov\v{s}ek]{}
 \keywords{Infinitely many sign-changing solutions, Kirchhoff type problems, invariant sets, descending flow}
 \email{sunjijiang2005@163.com} \email{lilin420@gmail.com}  \email{matija.cencelj@fmf.uni-lj.si} \email{bostjan.gabrovsek@fmf.uni-lj.si}
\thanks{J. Sun
was supported by NSFC (No.11501280, No.11861046) and the Natural Science Foundation of Jiangxi Province (No.20181BAB201004). L. Li was supported by the National Natural Science Foundation of China (No. 11601046), Chongqing Science and Technology Commission (No. cstc2016jcyjA0310) and Program for University Innovation Team of Chongqing (No. CXTDX201601026).
M. Cencelj and B. Gabrov\v{s}ek
were supported by the Slovenian Research Agency grants
J1-8131, J1-7025, N1-0064 and N1-0083.}
\thanks{$^*$ Corresponding author}
\date{\today}
\begin{document}
\maketitle
\medskip
\centerline{\scshape Jijiang Sun}
\medskip
{\footnotesize
 \centerline{Department of Mathematics}
   \centerline{Nanchang University, Nanchang 330031, PR China}}

\medskip
\centerline{\scshape Lin Li}
\medskip
{\footnotesize
 \centerline{School of Mathematics and Statistics}
   \centerline{Chongqing Technology and Business University, Chongqing 400067, PR China}}

\medskip
\centerline{\scshape
	Matija Cencelj}
\medskip
{\footnotesize
	\centerline{Faculty of Education and Faculty of Mathematics and Physics}
	\centerline{University of Ljubljana, 1000 Ljubljana, Slovenia}}
	
\medskip
\centerline{\scshape
Bo\v{s}tjan Gabrov\v{s}ek}
\medskip
{\footnotesize
	\centerline{Faculty of Mechanical Engineering and Faculty of Mathematics and Physics}
	\centerline{University of Ljubljana, 1000 Ljubljana, Slovenia}}
\bigskip

\begin{abstract}
In this paper, we consider the following nonlinear Kirchhoff type problem:
\[
\left\{\begin{array}{lcl}-\left(a+b\displaystyle\int_{\mathbb{R}^3}|\nabla u|^2\right)\Delta u+V(x)u=f(u), & \textrm{in}\,\,\mathbb{R}^3,\\
u\in H^1(\mathbb{R}^3),
\end{array}\right.
\]
where $a,b>0$ are constants, the nonlinearity $f$ is superlinear at infinity with subcritical growth and $V$ is continuous and coercive.
For the case when
 $f$ is odd in $u$
 we obtain infinitely many sign-changing solutions for the above problem
 by using a combination of invariant sets method and
 the Ljusternik-Schnirelman type minimax method.
To the best of our knowledge, there are only few existence results for this problem.
It is worth mentioning that the nonlinear term may not
be 4-superlinear at infinity, in particular, it includes the power-type nonlinearity
$|u|^{p-2}u$ with $p\in(2,4]$.
\end{abstract}

\section{Introduction and Main Results}

In this paper we are interested in establishing the multiplicity of sign-changing solutions to the following nonlinear Kirchhoff type problem
\begin{equation}\label{eq1}
\left\{\begin{array}{lcl}-\left(a+b\displaystyle\int_{\mathbb{R}^3}|\nabla u|^2\right)\Delta u+V(x)u=f(u), & \textrm{in}\,\,\mathbb{R}^3,\\
u\in H^1(\mathbb{R}^3),
\end{array}\right.
\end{equation}
where $a,b>0$ are constants, $V\in C(\mathbb{R}^3,\mathbb{R})$, and $f\in C(\mathbb{R},\mathbb{R})$.

Problems like \eqref{eq1} have been widely investigated because they have a strong physical meaning. Indeed, \eqref{eq1} is related to the stationary analogue of the equation
\begin{equation}\label{eq2}
u_{tt}-\left(a + b \int_{\Omega}|\nabla u|^2dx \right) \Delta u=f(x,u)
\end{equation}
proposed by Kirchhoff in \cite{Kirchhoff1883} as an extension of the classical D'Alembert's wave equation for free vibrations of elastic strings. In \cite{Lions1977}, Lions proposed an abstract framework for
 the problem and after that, problem \eqref{eq2} began to receive
 a lot of
 attention.

In \eqref{eq1}, if we set $V(x)=0$ and replace $\mathbb{R}^3$ and $f(u)$ by a bounded domain $\Omega\subset\mathbb{R}^N$ and $f(x,u)$, respectively, then we get the following Kirchhoff type equation
\begin{equation}\label{eq3}
\left\{\begin{array}{lcl}-\left(a+b\displaystyle\int_{\Omega}|\nabla u|^2\right)\Delta u=f(x,u), & x\in\Omega,\\
u=0,& x\in\partial\Omega.
\end{array}\right.
\end{equation}
The above problem is a nonlocal one as the appearance of the term $\int_{\Omega}|\nabla u|^2dx$ implies that \eqref{eq3} is not a pointwise identity. This phenomenon causes some mathematical difficulties, which make the study of \eqref{eq3} particularly interesting. In recent years, by using variational methods, the solvability of equation \eqref{eq3} with subcritical or critical growth nonlinearity has been paid much attention by various authors, see, e.g. \cite{Chen2011,Ma2003,Naimen2014,Perera2006,Sun2011,Sun2012} and the references therein. For the results concerning the existence of sign-changing solutions for \eqref{eq3}, we refer the reader to papers \cite{Mao2009,Shuai2015,Zhang2006} which depend heavily on the nonlinearity term with $4$-superlinear growth at infinity in the sense that $$\lim_{|t|\to\infty}\frac{F(x,t)}{t^4}=+\infty,\quad\textrm{uniformly\ in\ }x\in\Omega,$$
where $F(x,t)=\int_0^tf(x,s)ds$ and \cite{Lu2015,Yao2016} with the nonlinearity $f(x,u)$ may not be $4$-superlinear at infinity.

If we replace $f(u)$ by $f(x,u)$ in \eqref{eq1},
several authors have
considered the following problem
\begin{equation}\label{eq4}
-\left(a+b\displaystyle\int_{\mathbb{R}^N}|\nabla u|^2\right)\Delta u+V(x)u=f(x,u), \quad x\in\mathbb{R}^N.
\end{equation}
In recent years, there have been enormous results on existence, nonexistence and
multiplicity of nontrivial solutions for such problem depending on the assumptions of the potential $V$ and $f$. See, for example, \cite{Li2014,Liy2012,Wu2011} and the references therein. Recently, replacing $a$ and $b$ by $\varepsilon^2a$ and $\varepsilon b$ in \eqref{eq4}, respectively, many researches have studied
a certain concentration phenomena for the following Kirchhoff type equation
\begin{equation*}
\left\{\begin{array}{lcl}\left(-\varepsilon^2a+\varepsilon b\displaystyle\int_{\mathbb{R}^3}|\nabla u|^2\right)\Delta u+V(x)u=f(x,u),& \textrm{in}\,\,\mathbb{R}^3,\\
u>0,u\in H^1(\mathbb{R}^3),
\end{array}\right.
\end{equation*}
see e.g. \cite{Figueiredo2014,He2012,He2015,Lius2015,Wang2012}. We mention that there are only
few works concerning
the existence of sign-changing solutions for \eqref{eq4}. We are only aware of the works \cite{Deng2015,Huang2014,Ye2015}. In \cite{Deng2015}, Deng \textit{et al.} studied the existence of radial sign-changing solutions with prescribed numbers of nodal domains for \eqref{eq4} in $H_r^1(\mathbb{R}^3)$, the subspace of radial functions of $H^1(\mathbb{R}^3)$ by using a Nehari manifold and gluing solution pieces together, when $V(x)=V(|x|)$ satisfies
\begin{itemize}
	\item [$(V'_1)$] $V\in C([0,+\infty),\mathbb{R})$ is bounded below by a positive constant $V_0$;
\end{itemize}
and $f(x,u)=f(|x|,u)$ satisfies the following hypotheses:
\begin{itemize}
	\item [$(f'_{1})$] $f(r,u)\in C^1([0,+\infty)\times\mathbb{R},\mathbb{R})$ is odd in $u$ for every $r\geq0$;
	\item [$(f'_{2})$] $f(r,u)=o(|u|)$ as $u\to0$ uniformly in $r\geq0$;
	\item [$(f'_{3})$] for some constant $p\in(4,6)$, $\lim_{u\to+\infty}\frac{f(r,u)}{u^{p-1}}=0$ uniformly in $r\geq0$;
	\item [$(f'_{4})$] $\lim_{u\to+\infty}\frac{F(r,u)}{u^4}=+\infty$, where $F(r,u)=\int_{0}^{u}f(r,t)dt$;
	\item [$(f'_{5})$] $\frac{f(r,u)}{|u|^3}$ is an increasing function of $u\in\mathbb{R}\setminus\{0\}$ for every $r\geq0$.
\end{itemize}
In \cite{Huang2014}, Huang and Liu studied the existence of least energy sign-changing solutions with exactly two nodal domains for a variant of \eqref{eq4}:
\begin{equation*}
-\left(1+\lambda\displaystyle\int_{\mathbb{R}^N}(|\nabla u|^2+V(x)u^2)\right)[\Delta u+V(x)u]=|u|^{p-2}u, \quad x\in\mathbb{R}^N.
\end{equation*}
where $\lambda>0$, $p\in(4,6)$ and $V$ is assumed to guarantee the compactness. Ye \cite{Ye2015} proved
 the existence of least energy sign-changing solutions for equation \eqref{eq4} with $f(x,u)=f(u)$ (i.e., \eqref{eq1}) by using constrained minimization of the sign-changing Nehari manifold and Brouwer degree theory under the conditions that $V$ satisfies
\begin{itemize}
	\item [$(V_1)$] $V\in C(\mathbb{R}^3,\mathbb{R})$ satisfies $\inf_{\mathbb{R}^3}V(x)\geq V_0>0$ for some positive constant $V_0$ and is coercive, i.e., $\lim\limits_{|x|\to\infty}V(x)=\infty$,
\end{itemize}
and the nonlinearity $f\in C^1(\mathbb{R},\mathbb{R})$ satisfies the following assumptions:
\begin{itemize}
	\item [$(\widetilde{f}_{1})$] $\lim_{s\to0}\frac{f(s)}{|s|^3}=0$;
	\item [$(\widetilde{f}_{2})$] there exists $3<q<2^*-1$ such that $\lim_{|s|\to+\infty}\frac{f(s)}{|s|^q}=0$, where $2^*=+\infty$ if $N=2$ and $2^*=6$ if $N=3$;
	\item [$(\widetilde{f}_{3})$] $\lim_{|s|\to+\infty}\frac{F(s)}{s^4}=+\infty$, where $F(s)=\int_{0}^{s}f(t)dt$;
	\item [$(\widetilde{f}_{4})$] the function $\frac{f(s)}{|s|^3}$ is nondecreasing on $\mathbb{R}\setminus\{0\}$.
\end{itemize}

To the best of
 our knowledge, there is no result in the literature on the existence of multiple sign-changing solutions for problems
  \eqref{eq1} and \eqref{eq4} without any symmetry. Motivated by the above works, in the present paper we
   study the existence of infinitely many sign-changing solutions for problem \eqref{eq1} with coercive potential $V$, that is,
    $(V_1)$ holds and more general assumptions on $f$. More precisely, we assume
    that
     $f$ satisfies the following assumptions:
\begin{itemize}
	\item [$(f_{1})$] $f\in C(\mathbb{R},\mathbb{R})$ and $|f(u)|\leq C(1+|u|^{p-1})$ for some $C>0$ and $p\in(2,6)$;
	\item [$(f_{2})$] $f(u)=o(u)$ as $u\to0$;
	\item [$(f_{3})$] there exists $\mu>2$ such that $\frac{1}{\mu}f(u)u\geq F(u)>0$ for all $u\in\mathbb{R}\setminus\{0\}$, where $F(u)=\int_{0}^{u}f(s)ds$;
	\item [$(f_{4})$] $f$ is odd, i.e., $f(-u)=-f(u)$.
\end{itemize}

Now we state our first main result.

\begin{theorem}\label{thm1.1}
	Suppose
	that
	$(V_1)$ and $(f_{1})$--$(f_{4})$ hold and $\mu>4$.
	Then problem \eqref{eq1} admits infinitely many sign-changing solutions.
\end{theorem}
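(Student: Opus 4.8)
The plan is to obtain the sign-changing solutions as critical points of the energy functional that lie \emph{outside} two small neighbourhoods of the cones of nonnegative and nonpositive functions, and to produce infinitely many of them by a genus-based minimax run inside the framework of invariant sets of a descending flow. I work in the Hilbert space $H=\{u\in H^1(\mathbb{R}^3):\int_{\mathbb{R}^3}V(x)u^2<\infty\}$ with $\|u\|^2=\int_{\mathbb{R}^3}(a|\nabla u|^2+V(x)u^2)$; by the coercivity in $(V_1)$ the embedding $H\hookrightarrow L^s(\mathbb{R}^3)$ is compact for $2\le s<6$, which is the source of all compactness below. The functional
\[
I(u)=\tfrac12\|u\|^2+\tfrac{b}{4}\Big(\int_{\mathbb{R}^3}|\nabla u|^2\Big)^2-\int_{\mathbb{R}^3}F(u)
\]
is $C^1$ and, by $(f_4)$, even; its critical points are exactly the weak solutions of \eqref{eq1}.

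First I would set up the descending-flow machinery. Because of the nonlocal coefficient the gradient of $I$ does not interact well with the order structure, so for each $u$ I freeze the coefficient and use the $u$-dependent inner product $\langle v,w\rangle_u=(a+b\|\nabla u\|_2^2)\int\nabla v\cdot\nabla w+\int Vvw$, all of which are uniformly equivalent to $\|\cdot\|$ on bounded sets since $a+b\|\nabla u\|_2^2\ge a$. I define the auxiliary operator $A$ by letting $Au=w$ be the unique solution of $\langle w,\varphi\rangle_u=\int_{\mathbb{R}^3}f(u)\varphi$ for all $\varphi\in H$. A direct computation gives $\langle I'(u),\varphi\rangle=\langle u-Au,\varphi\rangle_u$, so $u-Au$ is a descent direction with $\langle I'(u),u-Au\rangle=\|u-Au\|_u^2$ and $\|I'(u)\|$ comparable to $\|u-Au\|$ on bounded sets; moreover $A$ is continuous, odd, and compact (by $(f_1)$ and the compact embedding), and its fixed points are precisely the solutions.

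The technical heart, and what I expect to be the main obstacle, is the invariance of the cones under $A$. Writing $u^{\pm}=\max\{\pm u,0\}$, I set $P^{-}_\varepsilon=\{u:\|u^{+}\|<\varepsilon\}$ and $P^{+}_\varepsilon=\{u:\|u^{-}\|<\varepsilon\}$. For $w=Au$ I test against $w^{+}$: the left side dominates $\|w^{+}\|^2$ (using only $a+b\|\nabla u\|_2^2\ge a$), while on the right the sign condition forced by $(f_3)$, namely $f(s)s>0$, gives $f(u)w^{+}\le f(u^{+})w^{+}$ pointwise; combining this with the small-$s$ bound from $(f_2)$, the growth from $(f_1)$, and the Sobolev inequalities yields
\[
\|(Au)^{+}\|\le \sigma V_0^{-1}\|u^{+}\|+C_\sigma\|u^{+}\|^{p-1},
\]
with $\sigma$ as small as we wish. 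Fixing $\sigma$ small and then $\varepsilon$ small (legitimate because $p>2$) produces a factor $\theta<1$ with $\|(Au)^{+}\|\le\theta\|u^{+}\|$, hence $A(\overline{P^{-}_\varepsilon})\subset P^{-}_\varepsilon$ and, by oddness, $A(\overline{P^{+}_\varepsilon})\subset P^{+}_\varepsilon$. This is exactly the step where the nonlocal coefficient must be tamed and where $(f_3)$ is indispensable. From $A$ one then builds in the standard way a locally Lipschitz pseudo-gradient field generating an odd descending flow for which $W=P^{+}_\varepsilon\cup P^{-}_\varepsilon$ is positively invariant; every critical point in $\Sigma=H\setminus W$ satisfies $\|u^{+}\|\ge\varepsilon$ and $\|u^{-}\|\ge\varepsilon$, so it is automatically sign-changing, and $0\in W$ is excluded.

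Finally I would run the symmetric minimax. The Palais--Smale condition holds: for a $(PS)_c$ sequence, $(f_3)$ with $\mu>4$ gives $I(u_n)-\tfrac1\mu\langle I'(u_n),u_n\rangle\ge(\tfrac12-\tfrac1\mu)\|u_n\|^2$, since then both the $\|u\|^2$ and the quartic coefficients are nonnegative and $\tfrac1\mu f(u)u-F(u)\ge0$; this forces boundedness, and the compact embedding yields a convergent subsequence. As $(f_3)$ also gives $F(u)\ge c_0|u|^{\mu}$ for large $|u|$ with $\mu>4$, the functional tends to $-\infty$ on every finite-dimensional subspace, so it is bounded above on large symmetric spheres of high genus. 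Invoking the abstract critical point theorem on invariant sets of descending flow for even functionals (its $\mathbb{Z}_2$-genus version), I set $c_j=\inf_{B\in\Gamma_j}\sup_{u\in B\setminus W}I(u)$, the infimum over symmetric families of genus at least $j$ relative to $W$; the deformation respects the invariance of $W$, so for large $j$ each $c_j$ is a critical value carried by a critical point in $\Sigma$, hence sign-changing. A standard Ljusternik--Schnirelman argument gives $c_j\to\infty$ (otherwise infinitely many sign-changing solutions would accumulate at a single level, contradicting the finiteness of the genus of the compact critical set), which yields infinitely many sign-changing solutions and completes the proof.
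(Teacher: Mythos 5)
Your strategy is in essence the paper's own: the same auxiliary operator $A$ obtained by freezing the nonlocal coefficient $a+b\|\nabla u\|_2^2$, the same contraction estimate on neighborhoods of the cones via $(f_2)$--$(f_3)$, boundedness of (PS) sequences from $(f_3)$ with $\mu>4$ plus the compact embedding, and a genus-based minimax respecting the invariant set $W$, with the same scheme for $c_j\to\infty$. There is, however, a concrete gap in your invariance step: you define the neighborhoods as $P^{+}_\varepsilon=\{u:\|u^{-}\|<\varepsilon\}$ in the $E$-norm. These sets are in general \emph{not convex}, because the map $u\mapsto u^{-}$ has no convexity property with respect to a norm containing gradient terms: from $w=(1-s)u+sB(u)$ one cannot bound $\|w^{-}\|_E$ by $(1-s)\|u^{-}\|_E+s\|B(u)^{-}\|_E$. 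But convexity is exactly what the standard subtangentiality argument uses to upgrade $B(\overline{P^{\pm}_\varepsilon})\subset P^{\pm}_\varepsilon$ to positive invariance of $P^{\pm}_\varepsilon$ under the flow $\frac{d}{dt}\sigma=-(\sigma-B(\sigma))$, so as written your claim that $W$ is positively invariant does not follow. The paper avoids this by working with the metric neighborhoods $\{u:\mathrm{dist}_E(u,P^{\pm})<\varepsilon\}$ of the convex cones $P^{\pm}$, for which $\mathrm{dist}_E(\cdot,P^{\pm})$ is convex; your own estimate chain goes through verbatim in that setting, using $\|u^{\mp}\|_{L^r}\le C(r)\,\mathrm{dist}_E(u,P^{\pm})$, so the fix is cosmetic but necessary.

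The second soft spot is the minimax step. You set $c_j=\inf_{B\in\Gamma_j}\sup_{B\setminus W}I$ and assert that each $c_j$ is a positive critical value carried in $\Sigma=E\setminus W$, but you never verify the intersection property that makes this meaningful: that every admissible set actually meets $Q=E\setminus W$, and does so at levels bounded below by $\alpha>0$. This is the substantive Step 1 of the paper's proof: the classes $\Gamma_k$ are built from odd maps $h$ on $\mathcal{B}_{R_m}\cap E_m$, and $A\cap Q\neq\emptyset$ is established through the attracting domain $\mathcal{D}$ of $0$, the Borsuk--Ulam theorem (giving $\gamma(\partial\mathcal{O})=m$), subadditivity of the genus, and the bound $\gamma(W\cap\partial\mathcal{D})\le 1$, which in turn rests on $P^{+}_\varepsilon\cap P^{-}_\varepsilon\cap\partial\mathcal{D}=\emptyset$ and on $I>0$ on $\overline{P^{+}_\varepsilon}\cap\overline{P^{-}_\varepsilon}\setminus\{0\}$. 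If you mean to invoke the abstract theorem of Liu--Liu--Wang, its hypotheses still have to be checked for this nonlocal functional; as stated, your appeal to "genus at least $j$ relative to $W$" black-boxes precisely this. Likewise, your parenthetical justification of $c_j\to\infty$ compresses a genuine deformation-plus-genus-counting argument (the paper's Step 3, with the neighborhood $N$ of $K_c\cap Q$ and the excision $Y_1=Y\cup h^{-1}(N)$). Both lacunae are repairable along the paper's lines, but they sit exactly at the two places where the invariant-set method requires real work beyond the semilinear template.
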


\begin{remark}
	The assumptions $(V_1)$ plays a role only in guaranteeing the compactness of the (PS) sequence for the energy functional $I$ associated with \eqref{eq1}. We point out that Theorem \eqref{thm1.1} also holds when working in $H_r^1(\mathbb{R}^3)$ if $V$ is a positive constant.
\end{remark}

Recall that $(f_{3})$ is the so-called Ambrosetti-Rabinowitz condition ((AR) for short). It is easy to see that $\mu>4$ guarantees the Palais-Smale ((PS) for short) sequence for $I$ at any $c\in\mathbb{R}$ is bounded. But if $\mu<4$, $f$ may not be $4$-superlinear at infinity, due to the effect of the nonlocal term, it is difficult to get a bounded (PS) sequence for $I$. Motivated by \cite{Li2014}, to overcome this difficulty, in the case $\mu<4$, we suppose that $V(x)$
satisfies
 the following additional condition
\begin{itemize}
	\item [$(V_2)$] $V$ is weakly differentiable, $(DV(x),x)\in L^{r}(\mathbb{R}^3)$ for some $r\in[\frac{3}{2},\infty]$ and $$\frac{\mu-2}{2}V(x)-(DV(x),x)\geq0\quad\textrm{for\, a.e.\ }\,x\in\mathbb{R}^3,$$
	where $\mu$ is given by $(f_{3})$.
\end{itemize}
It is worth mentioning that this assumption is different from that of \cite{Li2014}. Li and Ye \cite{Li2014} assumed $$V(x)-(DV(x),x)\geq0\quad\textrm{for\, a.e.\ }\,x\in\mathbb{R}^3,$$ and then obtained a positive ground state solution to \eqref{eq1} with $f(u)=|u|^{p-2}u$ $(p\in(3,6))$ by using the constrained minimization on a suitable Pohozaev-Nehari manifold. We remark that the case $2<p\leq 3$ is not included in their result.

Then we have the following result.
\begin{theorem}\label{thm1.2}
	Suppose
	that  $(V_1)$--$(V_2)$ and $(f_{1})$--$(f_{4})$ hold.
	Then problem \eqref{eq1} admits infinitely many sign-changing solutions.
\end{theorem}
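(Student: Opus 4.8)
The plan is to prove Theorem~\ref{thm1.2} with the same invariant-set/descending-flow architecture that underlies Theorem~\ref{thm1.1}, isolating the one place where $\mu\le 4$ forces new work, namely compactness. First I would fix the working space $E=\{u\in H^1(\mathbb{R}^3):\int_{\mathbb{R}^3}V(x)u^2<\infty\}$ with the inner-product norm $\|u\|^2=\int_{\mathbb{R}^3}(a|\nabla u|^2+V(x)u^2)$, which by $(V_1)$ is equivalent to the $H^1$-norm on $E$ and, crucially, yields a \emph{compact} embedding $E\hookrightarrow L^q(\mathbb{R}^3)$ for $q\in[2,6)$ (coercivity of $V$ kills the loss of mass at infinity). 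Under $(f_1)$--$(f_2)$ the functional
\[
I(u)=\tfrac{a}{2}\|\nabla u\|_2^2+\tfrac{b}{4}\|\nabla u\|_2^4+\tfrac12\int_{\mathbb{R}^3}V(x)u^2-\int_{\mathbb{R}^3}F(u)
\]
is $C^1$ on $E$, and by $(f_4)$ it is even, so the problem reduces to producing an unbounded sequence of sign-changing critical points.

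Next I would set up the descending-flow machinery. Because $I$ is only $C^1$ and the nonlocal term obstructs a direct pseudo-gradient construction, define the auxiliary operator $\mathcal{T}$ by letting $v=\mathcal{T}(u)$ solve the linear problem $\big(a+b\|\nabla u\|_2^2\big)(-\Delta v)+V(x)v=f(u)$; compactness of the embedding makes $\mathcal{T}$ continuous and compact, its fixed points are exactly the solutions, and $\langle I'(u),u-\mathcal{T}(u)\rangle$ controls $\|u-\mathcal{T}(u)\|^2$, so $u-\mathcal{T}(u)$ is a descent direction. I would then introduce the $\varepsilon$-neighborhoods $P^{\pm}_\varepsilon$ of the cones of nonnegative/nonpositive functions and, using $f(u)=o(u)$ together with the order-preserving property of the linear resolvent, prove $\mathcal{T}(P^{\pm}_\varepsilon)\subset P^{\pm}_\varepsilon$ for small $\varepsilon$; hence $\overline{P^{+}_\varepsilon\cup P^{-}_\varepsilon}$ is invariant under the associated descending flow and its complement, where the genuine sign-changing functions live, is an invariant set. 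Since $I$ is even and $P^{-}_\varepsilon=-P^{+}_\varepsilon$, a $\mathbb{Z}_2$-genus (Ljusternik--Schnirelman type) minimax over symmetric families disjoint from the cones produces values $c_k\to+\infty$, each a critical value carried by a sign-changing critical point, \emph{provided} the corresponding Palais--Smale sequences are compact.

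The heart of the matter, and the role of $(V_2)$, is the boundedness of these sequences when $\mu\le4$, where the Ambrosetti--Rabinowitz inequality no longer dominates the quartic term $\tfrac b4\|\nabla u\|_2^4$. Here I would bring in the Pohozaev functional
\[
P(u)=\tfrac a2\|\nabla u\|_2^2+\tfrac b2\|\nabla u\|_2^4+\tfrac32\int_{\mathbb{R}^3}V(x)u^2+\tfrac12\int_{\mathbb{R}^3}(DV(x),x)u^2-3\int_{\mathbb{R}^3}F(u)
\]
and form the combination $I(u)-\alpha\langle I'(u),u\rangle-\beta P(u)$ with $\alpha=\tfrac14-\tfrac\beta2$, chosen exactly so that the $\|\nabla u\|_2^4$ terms cancel. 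A direct computation then gives, using $(f_3)$ in the form $f(u)u\ge\mu F(u)>0$ and $(V_2)$ in the form $(DV(x),x)\le\tfrac{\mu-2}{2}V(x)$,
\[
I(u)-\alpha\langle I'(u),u\rangle-\beta P(u)\ \ge\ \tfrac a4\|\nabla u\|_2^2+\tfrac{1-\beta(\mu+2)}{4}\int_{\mathbb{R}^3}V(x)u^2,
\]
where the nonlinear remainder $(3\beta-1)\int F+\alpha\int f(u)u\ge(\alpha+\tfrac{3\beta-1}{\mu})\int f(u)u$ is nonnegative once $\beta\ge\tfrac{4-\mu}{2(6-\mu)}$. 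Thus the right-hand side is coercive in $\|u\|^2$ as soon as $\tfrac{4-\mu}{2(6-\mu)}\le\beta<\tfrac1{\mu+2}$, and this window is nonempty precisely because $(4-\mu)(\mu+2)<2(6-\mu)$ reduces to $(\mu-2)^2>0$, which holds by $\mu>2$ (note $(f_1)$ and $(f_3)$ force $\mu\le p<6$, so all denominators are positive). Consequently any sequence along which $I$ is bounded, $\langle I'(u_n),u_n\rangle=o(\|u_n\|)$, and $P(u_n)\to0$ is automatically bounded, whereupon the compact embedding upgrades it to a convergent subsequence.

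The main obstacle, and where most of the real work lies, is that arbitrary Palais--Smale sequences do \emph{not} satisfy the Pohozaev identity, so $P(u_n)\to0$ cannot simply be assumed; indeed dilations show genuinely unbounded $(\mathrm{PS})$ sequences can occur when $f$ is not $4$-superlinear. To close this gap I would not work with the raw minimax but enforce Pohozaev along the relevant sequences, either by Jeanjean's monotonicity trick---perturbing to $I_\lambda(u)=\tfrac a2\|\nabla u\|_2^2+\tfrac b4\|\nabla u\|_2^4+\tfrac12\int Vu^2-\lambda\int F(u)$, extracting for a.e.\ $\lambda\in[1,2]$ a bounded sign-changing critical point (which then satisfies $P_\lambda=0$ exactly), and passing $\lambda\to1$ using the coercive combination above to retain boundedness and, via the compact embedding, recover a sign-changing solution of \eqref{eq1}---or by constraining the symmetric minimax to the Pohozaev manifold $\{P(u)=0\}$ intersected with the sign-changing invariant set. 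Either route must be carried out compatibly with the $\mathbb{Z}_2$-genus so that the countably many levels $c_k\to\infty$ survive the limiting procedure; reconciling the monotonicity-trick limit with the genus-based multiplicity, while preserving the cone invariance throughout the deformation, is the delicate step. Once this is secured, each $c_k$ yields a distinct sign-changing solution and the proof is complete.
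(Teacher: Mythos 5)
Your setup (the space $E$, compact embedding, auxiliary operator $\mathcal{T}$, cone neighborhoods $P^{\pm}_\varepsilon$, genus minimax) matches the paper, and your coercivity computation is correct and is essentially the paper's key estimate in disguise: your combination $I(u)-\alpha\langle I'(u),u\rangle-\beta P(u)$ with $\alpha=\tfrac14-\tfrac{\beta}{2}$ and $\beta\in\bigl[\tfrac{4-\mu}{2(6-\mu)},\tfrac{1}{\mu+2}\bigr)$ (nonempty since $(\mu-2)^2>0$) is a reparametrization of the paper's linear combination of the energy, Nehari and Pohozaev identities (they take, in effect, $\alpha=\tfrac{1}{\mu+6}$, $\beta=\tfrac{2}{\mu+6}$, keeping a positive multiple $\tfrac{\mu-2}{4}b\bigl(\int_{\mathbb{R}^3}|\nabla u|^2\bigr)^2$ rather than cancelling the quartic term). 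You also correctly identify the true obstruction: arbitrary (PS) sequences do not satisfy the Pohozaev identity, so this estimate only bounds \emph{exact critical points}, and some device is needed to produce them.

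The genuine gap is in your proposed device. The monotonicity trick with $I_\lambda=\ldots-\lambda\int F(u)$, $\lambda\in[1,2]$, does not change the growth of the nonlinearity, so for \emph{every} $\lambda$ the functional still fails (PS) a priori; the trick yields only a bounded (PS) sequence at a minimax level for a.e.\ $\lambda$. But the entire sign-changing construction of Section 3 needs more than that: the deformation lemma (Lemma \ref{lem3.5}) uses the (PS) condition to get compactness of $K_c$, the lower bound $\|u-B(u)\|_E\geq\beta$ on the strip $I^{-1}([c-\varepsilon_0,c+\varepsilon_0])\setminus N_\delta(K_c)$, and Step 3 of the proof of Theorem \ref{thm1.1} uses it again to show $c_k\to\infty$. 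With only a bounded (PS) sequence for a.e.\ $\lambda$ you cannot run the invariant-cone deformation, so neither the sign-changing property nor the infinitely many levels survive; your route (b), a genus minimax constrained to $\{P(u)=0\}$ compatible with the descending flow, is likewise nonstandard and not carried out. You flag this yourself (``once this is secured''), but securing it is precisely the missing proof. The paper's resolution is different and cleaner: it perturbs by $-\tfrac{\lambda}{r}\int_{\mathbb{R}^3}|u|^r$ with $r\in(\max\{4,p\},6)$ and $\lambda\in(0,1]$, so that the $r$-superlinear term dominates the nonlocal quartic term and $I_\lambda$ satisfies the full (PS) condition for each fixed $\lambda>0$ (Lemma \ref{lem4.1}); the whole Theorem \ref{thm1.1} machinery then applies verbatim to $I_\lambda$, giving sign-changing critical points $u_k^\lambda$ with $c_k^\lambda\to\infty$, and the Pohozaev-plus-$(V_2)$ estimate you derived is used exactly where it is legitimate --- on exact critical points --- to get bounds uniform in $\lambda$ (here $r>\mu$ keeps the perturbation term with the right sign), after which $\lambda\to0^+$ and the compact embedding produce the solutions, with $\|u_k^{\lambda\pm}\|_E\geq\alpha_0>0$ guaranteeing the limits remain sign-changing. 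In short: right skeleton and right a priori estimate, but the limiting mechanism you propose would not go through as described, and the paper's $|u|^r$-perturbation is the missing idea.
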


\begin{remark}
	(i) To the best of our knowledge, there is no existence result for sign-changing solutions to \eqref{eq1} in the literature even in the special case $f(u)=|u|^{p-2}u$ with $2<p\leq4$.
	
	(ii) There exists function $V(x)$ satisfying the assumptions $(V_1)$--$(V_2)$. For example, let $$V(x)=\ln(1+|x|)+\frac{2}{\mu-2}.$$
	Clearly, $(V_1)$ holds. Moreover, for $x\in\mathbb{R}^3\setminus\{0\}$, $(DV(x),x)=\frac{|x|}{1+|x|}$. Therefore, $(DV(x),x)\in L^{\infty}(\mathbb{R}^3)$ and for a.e. $x\in\mathbb{R}^3$, $$\frac{\mu-2}{2}V(x)-(DV(x),x)=\frac{\mu-2}{2}\ln(1+|x|)+1-\frac{|x|}{1+|x|}\geq\frac{\mu-2}{2}\ln(1+|x|)\geq0,$$
	and so condition $(V_2)$ holds. Another example is $V(x)=\ln(1+|x|^2)-\frac{|x|^2}{1+|x|^2}+\frac{\mu+2}{\mu-2}$. One can also check that $V(x)$ satisfies $(V_1)$--$(V_2)$.
\end{remark}

Motivated by \cite{Sun2017,Sun2014}, we will prove Theorems \ref{thm1.1} and \ref{thm1.2} by applying the usual Ljusternik-Schnirelman type minimax method in conjunction with invariant set method. More precisely, we will construct certain invariant sets of the gradient flow corresponding to the energy functional $I$ such that all positive and negative solutions are contained in these invariant sets and then minimax arguments can be applied to construct sign-changing solutions outside these invariant sets. The method of invariant sets of descending flow has been used widely in dealing with sign-changing solutions of elliptic problem, see \cite{Bartsch2004,Bartsch2005,Liu2001} and the references therein. But for the nonlocal problem, there are serious technical
 difficulties to overcome. Here we would like to point out the difficulties we will encounter and our main ideas.

Due to the effect of the nonlocal term, the arguments of constructing invariant sets of descending flow in \cite{Bartsch2004,Liu2005,Sun2014} cannot be directly applied to problem \eqref{eq1}. To overcome this difficulty, we will adopt some ideas from \cite{Liu2015} which studied the existence of infinitely many sign-changing solutions for a Schr\"{o}dinger-Poisson system by using an abstract critical point developed by Liu et al. \cite{Liuj2015}. First,
 we will construct an auxiliary operator $A$ (see Lemma \ref{lem3.00} below) from which we can construct closed convex sets containing all the positive and negative solutions in their interior. However, $A$ itself cannot be used to defined the desired invariant sets of the flow, because the operator $A$ is merely continuous under our assumptions. Inspired by \cite{Bartsch2005}, from $A$, we can get a locally Lipschitz continuous operator $B$ (see Lemma \ref{lem3.3} below) which inherits the main properties of $A$. Then, we can use $B$ to define the flow (see Lemma \ref{lem3.4} below). Finally, by using a suitable deformation lemma in the presence of invariant sets (see Lemma \ref{lem3.5} below) and minimax procedures, we prove that problem \eqref{eq1} has infinitely many sign-changing solutions.

As mentioned above, if $\mu<4$, the nonlinearity term $f$ may not be $4$-superlinear at infinity. It prevents us from obtaining a bounded (PS) sequence, let alone (PS) condition holds for $I$. Therefore, the above arguments cannot be applied directly to prove Theorem \ref{thm1.2}. To overcome the obstacle, inspired by \cite{Liu2015}, we consider the perturbed functionals $I_\lambda: E\to\mathbb{R}$ (see \eqref{eq4.000} below) defined by $$I_\lambda(u)=I(u)-\frac{\lambda}{r}\int_{\mathbb{R}^3}|u|^r, \quad \lambda\in(0,1],$$
where $r\in(\max\{4,p\},6)$, here $p$ is from $(f_{1})$. It will be shown that $I_\lambda$ admits infinitely many sign-changing critical points $\{u_k^\lambda\}_{k\geq2}$ by using the above framework. Then, by using a Pohozaev identity and $(V_2)$, we can prove that $u_k^\lambda\to u_k$ strongly in $E$ as $\lambda\to0^+$ and then the existence of infinitely many sign-changing solutions for \eqref{eq1} are obtained.

The remainder of this paper is organized as follows. In
Section \ref{sec2} we derive a variational setting for
problem \eqref{eq1} and give some preliminary lemmas. We will prove Theorem \ref{thm1.1} in
Section \ref{sec3}. Section \ref{sec4} is devoted to the proof of Theorem \ref{thm1.2}.

\section{Variational setting and preliminary lemmas}\label{sec2}

Throughout this paper, we use the standard notations. We denote by $C,c_i,C_i,i=1,2,\cdots$ for various positive
constants whose exact value may change from lines to lines but are
not essential to the analysis of problem. $\|\cdot\|_{q}$ denotes the usual norm of
$L^{q}(\mathbb{R}^3)$ for $q\in[2,\infty]$. For simplicity, we write $\int_{\mathbb{R}^3}h$ to mean the Lebesgue integral of $h(x)$ over $\mathbb{R}^3$. For a functional $J: E\to \mathbb{R}$, we set
$J^b:=\{u\in E:J(u)\leq b\}$. We use ``$\rightarrow$" and ``$\rightharpoonup$" to denote the strong and weak convergence in the related function space respectively.
We will write $o(1)$ to denote quantity that tends to $0$ as $n\to\infty$.

Our argument is variational. In the paper, we work in the following Hilbert space $$E:=\left\{u\in H^1(\mathbb{R}^3):\int_{\mathbb{R}^3} V(x)u^2<\infty\right\}$$
with the norm $$\|u\|_E=\left(\int_{\mathbb{R}^3}(a|\nabla u|^2+V(x)|u|^2)\right)^{\frac{1}{2}}.$$ We denote its inner product by $(\cdot,\cdot)_E$. It is well known that $E\hookrightarrow L^s(\mathbb{R}^3)$ is continuous for $s\in[2,6]$. Moreover, as in \cite{Bartsch1995}, we have the following result which plays an important role in our proof.

\begin{lemma}\label{lem2}
	Under $(V_1)$, the embedding $E\hookrightarrow L^s(\mathbb{R}^3)$ is compact for any $s\in[2,6)$.
\end{lemma}

\begin{remark}
	Indeed, as in \cite{Bartsch2001}, $(V_1)$ can be replaced by the more general condition.
	\begin{itemize}
		\item [$(V'_1)$]  $V\in C(\mathbb{R}^3,\mathbb{R})$ satisfies $\inf_{\mathbb{R}^3}V(x)\geq V_0>0$ and there exists $r_0>0$ such that for any $M>0$,
		$$\lim_{|y|\to\infty}m\left(\{x\in\mathbb{R}^3:V(x)\leq M\}\cap\{x\in\mathbb{R}^3:|x-y|\leq r_0\}\right)=0.$$
	\end{itemize}
\end{remark}

Since $E\hookrightarrow L^2(\mathbb{R}^3)$ and $L^2(\mathbb{R}^3)$ is a separable Hilbert space, $E$ has a countable orthogonal basis $\{e_i\}$. In the following, for any $m\in\mathbb{N}$, we denote $E_m:=\textrm{span}\{e_1,e_2,\cdots,e_m\}$.

Under our assumptions, it is standard to show that the weak solutions to \eqref{eq1} correspond to the critical points of the energy functional $I\in C^1(E,\mathbb{R})$ defined by
\begin{equation}\label{eq2.0}
I(u)=\frac{1}{2}\int_{\mathbb{R}^3}\left(a|\nabla u|^2+V(x)|u|^2\right)+\frac{b}{4}\left(\int_{\mathbb{R}^3}|\nabla u|^2\right)^2-\int_{\mathbb{R}^3}F(u),
\end{equation}
where $F(u)=\int_0^uf(s)ds$. Moreover, for any $u,v\in E$, we have
$$\langle I'(u),v\rangle=\int_{\mathbb{R}^3}\left(a\nabla u\cdot\nabla v+V(x)uv\right)+b\int_{\mathbb{R}^3}|\nabla u|^2\int_{\mathbb{R}^3}\nabla u\cdot\nabla v-\int_{\mathbb{R}^3}f(u)v.$$

\section{proof of Theorem \ref{thm1.1}}\label{sec3}

In this section, we devote to prove the existence of infinitely many sign-changing solutions to problem \eqref{eq1} with $\mu>4$ by using a combination of invariant sets method and Ljusternik-Schnirelman type minimax results.

\subsection{Some technical lemmas}

In order to construct the minimax values for the functional $I$ defined in \eqref{eq2.0}, the following three technical lemmas are needed.

\begin{lemma}
	Under the assumptions $(f_1)$-$(f_3)$, the functional $I$ satisfies the $\rm(PS)$ condition.
\end{lemma}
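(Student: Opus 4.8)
The plan is to verify the (PS) condition in the two standard stages: first show that every (PS) sequence is bounded in $E$, then upgrade the resulting weak convergence to strong convergence by means of the compact embedding of Lemma~\ref{lem2}. Throughout, let $\{u_n\}\subset E$ satisfy $I(u_n)\to c$ and $I'(u_n)\to 0$ in $E^*$, and recall that $\mu>4$ is in force throughout this section (as in Theorem~\ref{thm1.1}).

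For boundedness I would exploit the Ambrosetti--Rabinowitz condition $(f_3)$ together with $\mu>4$. Forming the combination $I(u_n)-\frac{1}{\mu}\langle I'(u_n),u_n\rangle$ and using the expressions for $I$ and $I'$, one gets
\[
I(u_n)-\tfrac{1}{\mu}\langle I'(u_n),u_n\rangle=\left(\tfrac{1}{2}-\tfrac{1}{\mu}\right)\|u_n\|_E^2+\left(\tfrac{1}{4}-\tfrac{1}{\mu}\right)b\left(\int_{\mathbb{R}^3}|\nabla u_n|^2\right)^2+\int_{\mathbb{R}^3}\left(\tfrac{1}{\mu}f(u_n)u_n-F(u_n)\right).
\]
The last integral is nonnegative by $(f_3)$, and since $\mu>4$ both coefficients $\frac{1}{2}-\frac{1}{\mu}$ and $\frac{1}{4}-\frac{1}{\mu}$ are strictly positive. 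As the left-hand side is bounded above by $c+o(1)+o(1)\|u_n\|_E$, I obtain $\left(\frac{1}{2}-\frac{1}{\mu}\right)\|u_n\|_E^2\leq c+o(1)+o(1)\|u_n\|_E$, which forces $\{u_n\}$ to be bounded. It is precisely here that $\mu>4$ is needed, since for $\mu<4$ the nonlocal coefficient $\frac{1}{4}-\frac{1}{\mu}$ changes sign; this is the obstruction later circumvented by the perturbed functional $I_\lambda$.

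Once boundedness is in hand, up to a subsequence $u_n\rightharpoonup u$ in $E$, and Lemma~\ref{lem2} yields $u_n\to u$ strongly in $L^s(\mathbb{R}^3)$ for every $s\in[2,6)$. I would then test with the difference, using $\langle I'(u_n),u_n-u\rangle\to 0$, and analyze the three resulting terms. The nonlinear term tends to $0$: from $(f_1)$ and $(f_2)$ one has $|f(u)|\leq\varepsilon|u|+C_\varepsilon|u|^{p-1}$, so H\"older's inequality combined with the strong $L^2$ and $L^p$ convergence gives $\int_{\mathbb{R}^3}f(u_n)(u_n-u)\to 0$. The inner-product term splits, via weak convergence, as $(u_n,u_n-u)_E=\|u_n-u\|_E^2+(u,u_n-u)_E=\|u_n-u\|_E^2+o(1)$. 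For the nonlocal term I would write $\int_{\mathbb{R}^3}\nabla u_n\cdot\nabla(u_n-u)=\int_{\mathbb{R}^3}|\nabla(u_n-u)|^2+\int_{\mathbb{R}^3}\nabla u\cdot\nabla(u_n-u)$, where the last integral is $o(1)$ by weak convergence.

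The routine steps are the elementary growth and H\"older estimates; the only point requiring care is the nonlocal term, and the key observation is that it helps rather than hinders. Indeed, since $\int_{\mathbb{R}^3}|\nabla u_n|^2$ is bounded, the above splitting gives $b\int_{\mathbb{R}^3}|\nabla u_n|^2\int_{\mathbb{R}^3}\nabla u_n\cdot\nabla(u_n-u)=b\int_{\mathbb{R}^3}|\nabla u_n|^2\int_{\mathbb{R}^3}|\nabla(u_n-u)|^2+o(1)$, a nonnegative quantity up to $o(1)$. Collecting the three contributions, $\|u_n-u\|_E^2+b\int_{\mathbb{R}^3}|\nabla u_n|^2\int_{\mathbb{R}^3}|\nabla(u_n-u)|^2=o(1)$, and since both summands are nonnegative the nonlocal term may simply be discarded to conclude $\|u_n-u\|_E\to 0$, i.e. $u_n\to u$ strongly in $E$.
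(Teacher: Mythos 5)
Your proof is correct and follows essentially the same route as the paper, which notes that $(f_3)$ with $\mu>4$ gives boundedness of (PS) sequences and then defers to the compact embedding of Lemma~\ref{lem2} together with the argument of Lemma~4 in \cite{Wu2011}; your write-up is exactly that standard argument spelled out, including the correct splitting of the nonlocal term $b\int_{\mathbb{R}^3}|\nabla u_n|^2\int_{\mathbb{R}^3}\nabla u_n\cdot\nabla(u_n-u)$ into a nonnegative part plus $o(1)$.
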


\begin{proof}
	By $(f_3)$, it is easy to check that any (PS) sequence for $I$ at level $c\in\mathbb{R}$ is bounded. Thus, by Lemma \ref{lem2}, one can follow the same way as in the proof of Lemma 4 in \cite{Wu2011} to complete the present proof.
\end{proof}

\begin{lemma}\label{lem3}
	Suppose $(f_1)$-$(f_3)$ hold and $m\geq1$. Then there exists $R_m=R(E_m)>0$, such that
	$$\sup_{\mathcal{B}^{c}_{R_m}\cap E_m}I<0,$$
	where $\mathcal{B}^{c}_{R}:=E\setminus \mathcal{B}_{R}$ and $\mathcal{B}_{R}:=\{u\in E:\|u\|_E\leq R\}$.
\end{lemma}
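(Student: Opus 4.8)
The plan is to show that $I$ tends to $-\infty$ uniformly as $\|u\|_E\to\infty$ along the finite-dimensional subspace $E_m$, which immediately yields the claimed $R_m$. The decisive structural feature is that, since we work in the regime $\mu>4$ of this section, the superlinear term of order $\mu$ coming from $F$ dominates both the quadratic term $\tfrac12\|u\|_E^2$ and the quartic nonlocal term $\tfrac{b}{4}\|\nabla u\|_2^4$.

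First I would extract from the Ambrosetti--Rabinowitz condition $(f_3)$ the global lower bound
$$F(u)\ge c_1\abs{u}^\mu-c_1\abs{u}^2\qquad\text{for all }u\in\mathbb{R},$$
for some $c_1>0$. To obtain this, note that for $u>0$ the inequality $f(u)u\ge\mu F(u)$ together with $F(u)>0$ gives $\frac{d}{du}\log F(u)\ge\mu/u$; integrating from $1$ to $u$ yields $F(u)\ge F(1)u^\mu$ for $u\ge1$, and a symmetric computation (via the substitution $w=-u$) gives $F(u)\ge F(-1)\abs{u}^\mu$ for $u\le-1$. Combined with $F\ge0$ everywhere and the elementary fact that $\abs{u}^\mu\le\abs{u}^2$ when $\abs{u}<1$ (as $\mu>2$), this produces the displayed pointwise estimate with $c_1=\min\{F(1),F(-1)\}>0$.

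Integrating this bound over $\mathbb{R}^3$ and inserting it into \eqref{eq2.0} gives
$$I(u)\le\tfrac12\|u\|_E^2+\tfrac{b}{4}\|\nabla u\|_2^4-c_1\|u\|_\mu^\mu+c_1\|u\|_2^2.$$
Now I would exploit that $E_m$ is finite-dimensional, so all norms on it are equivalent: there are constants (depending on $m$) with $\|\nabla u\|_2\le a^{-1/2}\|u\|_E$, $\|u\|_2\le C\|u\|_E$ and, crucially, $\|u\|_\mu\ge c_m\|u\|_E$ for some $c_m>0$. These reduce the estimate to a scalar inequality $I(u)\le C_1\|u\|_E^2+C_2\|u\|_E^4-C_3\|u\|_E^\mu$ with $C_3>0$. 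Since $\mu>4$, the right-hand side, viewed as a function $g(t)$ of $t=\|u\|_E$, satisfies $g(t)\to-\infty$; choosing $R_m$ past the largest zero of $g$ and beyond the point where $g$ becomes monotone decreasing gives $\sup_{\|u\|_E>R_m}I(u)\le g(R_m)<0$, which is exactly $\sup_{\mathcal{B}^{c}_{R_m}\cap E_m}I<0$.

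The only delicate point is the interplay between the order-$\mu$ term and the quartic nonlocal term $\tfrac{b}{4}\|\nabla u\|_2^4$: it is precisely the hypothesis $\mu>4$ that lets the negative term win. Were $\mu\le4$, this argument would break down, since the nonlocal term could force $I\to+\infty$ on $E_m$, which is exactly why the perturbation device of Section~\ref{sec4} is needed for Theorem~\ref{thm1.2}. A minor technical care is required to pass from the pointwise bound to the integral bound on the unbounded domain $\mathbb{R}^3$, which is handled above by absorbing the small-$u$ region into the $\|u\|_2^2$ term rather than into an (infinite) additive constant.
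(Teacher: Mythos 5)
Your proof is correct and follows essentially the same route as the paper: both derive the lower bound $F(t)\geq c_1\lvert t\rvert^{\mu}-c_2\lvert t\rvert^{2}$ from the Ambrosetti--Rabinowitz condition $(f_3)$, estimate the nonlocal term by $\tfrac{b}{4a^2}\lVert u\rVert_E^4$, and use equivalence of norms on the finite-dimensional space $E_m$ together with $\mu>4$ to conclude $I(u)\to-\infty$ as $\lVert u\rVert_E\to\infty$ in $E_m$. You merely spell out details the paper leaves implicit (the integration of $f(u)u\geq\mu F(u)$ yielding the pointwise bound, and the explicit choice of $R_m$), which is fine.
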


\begin{proof}
	By $(f_1)$-$(f_3)$, there exists $c_1,c_2>0$ such that $F(t)\geq c_1|t|^\mu-c_2|t|^2$ for all $t\in\mathbb{R}$. Thus,
	\begin{equation*}
	\begin{split}
	I(u)&\leq\frac{1}{2}\|u\|^2_{E}+\frac{b}{4a^2}\|u\|^4_E-\int_{\mathbb{R}^3}F(u)\\
	&\leq\frac{1}{2}\|u\|^2_{E}+\frac{b}{4a^2}\|u\|^4_E-c_1\int_{\mathbb{R}^3}|u|^\mu+c_2\int_{\mathbb{R}^3}|u|^2.
	\end{split}
	\end{equation*}
	Since $\mu>4$ and any norm in finite dimensional space is equivalent, one concludes that $$\lim_{\mbox{\tiny$\begin{array}{c}
			u\in E_m\\\|u\|_E\to\infty
			\end{array}$}}I(u)=-\infty$$ for any fixed $m\geq 1$. Therefore the result follows.
\end{proof}

\begin{lemma}\label{lem3.0}
	Assume $(f_1)$ and $(f_2)$ hold. Then there exist $\rho>0$ and $\alpha>0$ such that
	$$\inf_{\partial \mathcal{B}_\rho}I\geq \alpha.$$
\end{lemma}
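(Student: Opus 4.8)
The plan is to verify the standard mountain-pass geometry of $I$ near the origin, namely that on a small sphere $\partial\mathcal{B}_\rho$ the quadratic part $\tfrac{1}{2}\|u\|_E^2$ strictly dominates the nonlinear part. The first step is to extract a single pointwise bound on $F$ out of $(f_1)$ and $(f_2)$. By $(f_2)$, for any $\varepsilon>0$ there is $\delta>0$ with $|f(u)|\le \varepsilon|u|$ whenever $|u|\le\delta$, while $(f_1)$ furnishes $|f(u)|\le C(1+|u|^{p-1})$ globally; treating the regions $|u|\le\delta$ and $|u|\ge\delta$ separately yields a constant $C_\varepsilon>0$ such that $|f(u)|\le \varepsilon|u|+C_\varepsilon|u|^{p-1}$ for all $u\in\mathbb{R}$. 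Integrating this gives $F(u)\le \tfrac{\varepsilon}{2}|u|^2+\tfrac{C_\varepsilon}{p}|u|^p$.

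Next I would substitute this into the functional. The key observation is that the nonlocal Kirchhoff term $\tfrac{b}{4}\bigl(\int_{\mathbb{R}^3}|\nabla u|^2\bigr)^2$ is nonnegative, so it may be discarded from below and only strengthens the estimate. Hence
$$
I(u)\ge \frac{1}{2}\|u\|_E^2-\int_{\mathbb{R}^3}F(u)
\ge \frac{1}{2}\|u\|_E^2-\frac{\varepsilon}{2}\|u\|_2^2-\frac{C_\varepsilon}{p}\|u\|_p^p.
$$
Since $p\in(2,6)$, the embeddings $E\hookrightarrow L^2(\mathbb{R}^3)$ and $E\hookrightarrow L^p(\mathbb{R}^3)$ are continuous, so there are $S_2,S_p>0$ with $\|u\|_2\le S_2\|u\|_E$ and $\|u\|_p\le S_p\|u\|_E$. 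Inserting these bounds produces
$$
I(u)\ge \Big(\frac{1}{2}-\frac{\varepsilon S_2^2}{2}\Big)\|u\|_E^2-\frac{C_\varepsilon S_p^p}{p}\|u\|_E^p.
$$

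Finally I would fix $\varepsilon$ small enough that $\tfrac{1}{2}-\tfrac{\varepsilon S_2^2}{2}\ge \tfrac{1}{4}$, obtaining
$$
I(u)\ge \frac{1}{4}\|u\|_E^2-C'\|u\|_E^p=\|u\|_E^2\Big(\frac{1}{4}-C'\|u\|_E^{p-2}\Big),
$$
with $C'=C_\varepsilon S_p^p/p$. Because $p-2>0$, the bracketed factor is positive for all sufficiently small $\|u\|_E$; choosing $\rho>0$ with $C'\rho^{p-2}\le \tfrac{1}{8}$ gives $I(u)\ge \tfrac{1}{8}\rho^2$ for every $u$ with $\|u\|_E=\rho$, and setting $\alpha:=\tfrac{1}{8}\rho^2>0$ yields $\inf_{\partial\mathcal{B}_\rho}I\ge\alpha$. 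There is no genuine difficulty in this argument: the only points requiring care are merging $(f_1)$ and $(f_2)$ into one inequality valid for every $u$, and noting that the nonlocal term, being nonnegative, is harmless for the lower bound.
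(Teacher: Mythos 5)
Your proof is correct and follows essentially the same route as the paper: extract the pointwise bound $|f(u)|\le\varepsilon|u|+C_\varepsilon|u|^{p-1}$ from $(f_1)$--$(f_2)$, discard the nonnegative nonlocal term, apply the Sobolev embeddings $E\hookrightarrow L^2$ and $E\hookrightarrow L^p$, and use $p>2$ to choose $\rho$ small. Your version is merely more explicit in the final choice of $\rho$ and $\alpha$; no substantive difference.
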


\begin{proof}
	By $(f_1)$ and $(f_2)$, for any $\delta>0$, there exists $C_\delta>0$ such that
	\begin{equation}\label{eq3.0}
	|f(t)|\leq\delta|t|+C_\delta|t|^{p-1}
	\end{equation}
	for $t\in\mathbb{R}$.
	Then, for $u\in E$, we have
	\begin{equation}\label{eq3.00}
	\begin{split}
	I(u)&=\frac{1}{2}\|u\|^2_{E}+\frac{b}{4}\left(\int_{\mathbb{R}^3}|\nabla u|^2\right)^2-\int_{\mathbb{R}^3}F(u)\\
	&\geq\frac{1}{2}\|u\|^2_{E}-\frac{\delta}{2}\int_{\mathbb{R}^3}|u|^2-\frac{C_\delta}{p}\int_{\mathbb{R}^3}|u|^p.
	\end{split}
	\end{equation}
	By the Sobolev embedding theorem, for any $r\in[2,6]$, there exists $C(r)>0$ such that $\|u\|_r\leq C(r)\|u\|_E$. Choose $\delta$ satisfying that $C(2)\delta<\frac{1}{2}$. Then, it follows from \eqref{eq3.00} that
	\begin{equation*}
	I(u)\geq\frac{1}{4}\|u\|^2_{E}-\frac{C_\delta C(p)}{p}\|u\|_E^p.
	\end{equation*}
	Noting that $p>2$, we conclude that there exist $\rho,\alpha>0$ such that
	$\inf_{\partial \mathcal{B}_\rho}I\geq \alpha,$ as required.
\end{proof}

\subsection{Invariant subsets of descending flow}

In order to construct a descending flow guaranteeing existence of the desired invariant sets for the functional $I$, we introduce an auxiliary operator $A:E\to E$. Precisely, for any $u\in E$, we define $v=A(u)$ the unique solution to the following equation
\begin{equation}\label{eq3.1}
-\left(a+b\int_{\mathbb{R}^3}|\nabla u|^2\right)\Delta v+V(x)v=f(u), \quad v\in E.
\end{equation}
Clearly, the set of fixed points of $A$ is the same as the set of critical points of $I$.

\begin{lemma}\label{lem3.00}
	The operator $A$ is well defined and continuous.
\end{lemma}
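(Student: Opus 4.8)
The plan is to treat \eqref{eq3.1} for fixed $u$ as a \emph{linear} elliptic problem with constant leading coefficient, invoke the Lax--Milgram theorem for well-definedness, and then establish continuity by testing the difference of two such problems against the difference of their solutions.

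First I would fix $u\in E$ and set $\beta_u:=a+b\int_{\mathbb{R}^3}|\nabla u|^2$, so that $\beta_u\geq a>0$. The weak form of \eqref{eq3.1} asks for $v\in E$ with
$$a_u(v,\varphi):=\int_{\mathbb{R}^3}\left(\beta_u\nabla v\cdot\nabla\varphi+V(x)v\varphi\right)=\int_{\mathbb{R}^3}f(u)\varphi\qquad\text{for all }\varphi\in E.$$
The bilinear form $a_u$ is bounded, since $\beta_u\|\nabla v\|_2\|\nabla\varphi\|_2\leq(\beta_u/a)\|v\|_E\|\varphi\|_E$ and $\int V|v||\varphi|\leq\|v\|_E\|\varphi\|_E$ by Cauchy--Schwarz in the $V$-weighted inner product, and coercive because $a_u(v,v)\geq a\int|\nabla v|^2+\int V v^2=\|v\|_E^2$. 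For the right-hand side I would use \eqref{eq3.0}: since $|f(u)|\leq\delta|u|+C_\delta|u|^{p-1}$, H\"older's inequality and the embeddings $E\hookrightarrow L^2,L^p$ give $\bigl|\int f(u)\varphi\bigr|\leq C(\|u\|_E+\|u\|_E^{p-1})\|\varphi\|_E$, so that $\varphi\mapsto\int f(u)\varphi$ lies in $E^*$. The Lax--Milgram theorem then produces a unique $v=A(u)\in E$, establishing that $A$ is well defined.

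For continuity, suppose $u_n\to u$ in $E$ and write $v_n=A(u_n)$, $v=A(u)$, $\beta_n=a+b\int|\nabla u_n|^2$, $\beta=\beta_u$, and $w_n=v_n-v$. Since $u_n\to u$ in $E$ forces $\|\nabla u_n\|_2\to\|\nabla u\|_2$, we have $\beta_n\to\beta$. Subtracting the weak formulations for $v_n$ and $v$, testing with $\varphi=w_n$, and using the rearrangement $\beta_n\nabla v_n-\beta\nabla v=\beta_n\nabla w_n+(\beta_n-\beta)\nabla v$, I obtain
$$\beta_n\int_{\mathbb{R}^3}|\nabla w_n|^2+\int_{\mathbb{R}^3}V|w_n|^2=\int_{\mathbb{R}^3}\bigl(f(u_n)-f(u)\bigr)w_n-(\beta_n-\beta)\int_{\mathbb{R}^3}\nabla v\cdot\nabla w_n.$$
Because $\beta_n\geq a$, the left-hand side dominates $\|w_n\|_E^2$. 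The last term on the right is bounded by $|\beta_n-\beta|\,a^{-1}\|v\|_E\|w_n\|_E=o(1)\|w_n\|_E$, since $\|v\|_E$ is fixed and $\beta_n\to\beta$.

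The main obstacle is the first term on the right, for which I would show that the superposition operator $u\mapsto f(u)$ is continuous from $E$ into $E^*$. Using the growth bound \eqref{eq3.0} together with the continuity of the Nemytskii operator from $L^2\times L^p$ into $L^2+L^{p'}$ (where $p'=p/(p-1)$) and the convergences $u_n\to u$ in $L^2$ and $L^p$ guaranteed by the embeddings of Lemma \ref{lem2}, one deduces $\|f(u_n)-f(u)\|_{E^*}\to0$, whence $\bigl|\int(f(u_n)-f(u))w_n\bigr|\leq\|f(u_n)-f(u)\|_{E^*}\|w_n\|_E=o(1)\|w_n\|_E$. Combining the two estimates yields $\|w_n\|_E^2\leq o(1)\|w_n\|_E$, so $\|w_n\|_E\to0$ and $A$ is continuous. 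I expect the Nemytskii continuity step to be the delicate point, since it is the only place where more than elementary inequalities and the variable-coefficient splitting are required.
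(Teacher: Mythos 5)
Your proof is correct, and it diverges from the paper's only in the well-definedness half. Since \eqref{eq3.1} is linear in $v$ with a constant (in $x$) leading coefficient $\beta_u\geq a$, your Lax--Milgram argument is the most direct route: boundedness and coercivity of $a_u(\cdot,\cdot)$ are immediate, and \eqref{eq3.0} puts the right-hand side in $E^*$ exactly as you say. The paper instead minimizes the associated energy $J(v)=\frac{1}{2}a_u(v,v)-\int_{\mathbb{R}^3}f(u)v$, verifying coercivity, weak lower semicontinuity and strict convexity and citing Theorems 1.5.6 and 1.5.8 of Badiale--Serra; the two arguments are interchangeable here, with the variational one being the template that would survive if the equation defining $A$ were nonlinear in $v$, and yours being shorter for the problem actually at hand. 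For continuity the two proofs coincide in substance: your rearrangement $\beta_n\nabla v_n-\beta\nabla v=\beta_n\nabla w_n+(\beta_n-\beta)\nabla v$, with $\beta_n\int|\nabla w_n|^2$ kept on the left, is the same algebra by which the paper discards the sign-definite term $-b\int|\nabla u_n|^2\int|\nabla w_n|^2$, and your two error terms are its $I_1$ and $I_2$. The one place where you defer to a citation --- continuity of the superposition operator from $L^2\cap L^p$ into $L^2+L^{p'}$ --- is precisely what the paper proves by hand: it splits $f=g_1+g_2$ via a cutoff $\phi$, with $|g_1(s)|\leq C|s|$ and $|g_2(s)|\leq C|s|^{p-1}$, pairs $g_1$ in $L^2$ and $g_2$ in $L^{p'}$--$L^p$ duality, and concludes by dominated convergence using the strong convergences from Lemma \ref{lem2}. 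So the ``delicate point'' you flag is exactly the paper's explicit work; if you wanted your write-up self-contained rather than resting on the Krasnoselskii-type Nemytskii theorem, that cutoff decomposition is the standard way to supply it.
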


\begin{proof}
	Let $u\in E$, and define $$J(v)=\frac{1}{2}\left(a+b\int_{\mathbb{R}^3}|\nabla u|^2\right)\int_{\mathbb{R}^3}|\nabla v|^2+\frac{1}{2}\int_{\mathbb{R}^3}V(x)v^2-\int_{\mathbb{R}^3}f(u)v,\quad v\in E.$$
	Obviously, $J\in C^1(E,\mathbb{R})$. And it is easy to check that $J$ is weakly lower semicontinuous.
	
	By \eqref{eq3.0} and the Sobolev embeddings, one has
	\begin{equation}\label{eq3.2}
	\begin{split}
	\left|\int_{\mathbb{R}^3}f(u)v\right|\leq\delta\int_{\mathbb{R}^3}|u||v|+C_\delta\int_{\mathbb{R}^3}|u|^{p-1}|v|
	&\leq\delta\|u\|_2\|v\|_2+C_\delta\|u\|_p^{p-1}\|v\|_p\\
	&\leq C\|v\|_E,
	\end{split}
	\end{equation}
	which implies
	\begin{equation*}
	\begin{split}
	J(v)&=\frac{1}{2}\left(a+b\int_{\mathbb{R}^3}|\nabla u|^2\right)\int_{\mathbb{R}^3}|\nabla v|^2+\frac{1}{2}\int_{\mathbb{R}^3}V(x)v^2-\int_{\mathbb{R}^3}f(u)v\\
	&\geq\frac{1}{2}\|v\|_E^2-C\|v\|_E\to+\infty,\quad \textrm{as}\,\,\|v\|_E\to\infty,
	\end{split}
	\end{equation*}
	where $C=C(\varepsilon,u)$ is a constant depending on $\varepsilon$ and $u$. Therefore, $J$ is coercive.
	
	By \eqref{eq3.2}, it is easy to see that $J$ is bounded below and maps bounded sets into bounded sets. Now we prove $J$ is also strictly convex. In fact,
	\begin{equation*}
	\begin{split}
	\langle J'(v)-J'(w),v-w\rangle&=\left(a+b\int_{\mathbb{R}^3}|\nabla u|^2\right)\int_{\mathbb{R}^3}|\nabla (v-w)|^2+\int_{\mathbb{R}^3}V(x)|v-w|^2\\
	&\geq\|v-w\|_E^2>0,\quad \textrm{if}\,\,v\neq w,
	\end{split}
	\end{equation*}
	where
	$$\langle J'(v),\varphi\rangle=\left(a+b\int_{\mathbb{R}^3}|\nabla u|^2\right)\int_{\mathbb{R}^3}\nabla v\cdot\nabla \varphi+\int_{\mathbb{R}^3}V(x)v\varphi-\int_{\mathbb{R}^3}f(u)\varphi,\quad \varphi\in E.$$
	Thus, by Theorems 1.5.6 and 1.5.8 in \cite{Badiale2011}, $J$ admits a unique minimizer $v=A(u)\in E$, which is the unique solution to \eqref{eq3.1}. Moreover, by \eqref{eq3.2}, $A$ maps bounded sets into bounded sets.
	
	In the following, we prove that the operator $A$ is continuous. Let $\{u_n\}\subset E$ with $u_n\to u$ in $E$ strongly. Denote $v=A(u)$ and $v_n=A(u_n)$. Then we have
	\begin{equation*}
	\begin{split}
	\|v_n-v\|_E^2
	&=b\int_{\mathbb{R}^3}|\nabla u|^2\int_{\mathbb{R}^3}\nabla v\nabla (v_n-v)-b\int_{\mathbb{R}^3}|\nabla u_n|^2\int_{\mathbb{R}^3}\nabla v_n\cdot\nabla (v_n-v)\\
	&\quad+\int_{\mathbb{R}^3}(f(u_n)-f(u))(v_n-v)\\
	&\leq b\left(\int_{\mathbb{R}^3}|\nabla u|^2-\int_{\mathbb{R}^3}|\nabla u_n|^2\right)\int_{\mathbb{R}^3}\nabla v\cdot\nabla (v_n-v)\\
	&\quad+\int_{\mathbb{R}^3}(f(u_n)-f(u))(v_n-v)\\
	&\triangleq I_1+I_2.
	\end{split}
	\end{equation*}
	By the H\"{o}lder inequality and Sobolev embedding theorem,
	\begin{equation}\label{eq3.3}
	\begin{split}
	I_1&\leq b\left|\int_{\mathbb{R}^3}|\nabla u|^2-\int_{\mathbb{R}^3}|\nabla u_n|^2\right|\|v\|_2\|v_n-v\|_2\\
	&\leq C\left|\int_{\mathbb{R}^3}|\nabla u|^2-\int_{\mathbb{R}^3}|\nabla u_n|^2\right|\|v\|_E\|v_n-v\|_E.
	\end{split}
	\end{equation}
	Now, we estimate the second term $I_2$. The proof is similar to that of \cite{Liu2015}. Let $\phi\in C^{\infty}_{0}(\mathbb{R})$ be such that $\phi(t)\in[0,1]$ for $t\in\mathbb{R}$, $\phi(t)=1$ for $\|t\|\leq1$ and $\phi(t)=0$ for $\|t\|\geq2$. Let $g_1(t)=\phi(t)f(t)$, $g_2(t)=f(t)-g_1(t)$. Then, by $(f_1)$ and $(f_2)$, there exists $C>0$ such that $|g_1(s)|\leq C|s|$ and $|g_2(s)|\leq C|s|^{p-1}$ for $s\in\mathbb{R}$. Thus,
	\begin{equation*}
	\begin{split}
	I_2&=\int_{\mathbb{R}^3}(g_1(u_n)-g_1(u))(v_n-v)+\int_{\mathbb{R}^3}(g_2(u_n)-g_2(u))(v_n-v)\\
	&\leq \left(\int_{\mathbb{R}^3}|g_1(u_n)-g_1(u)|^2\right)^{\frac{1}{2}}\left(\int_{\mathbb{R}^3}|v_n-v|^2\right)^{\frac{1}{2}}\\
	&\quad+\left(\int_{\mathbb{R}^3}|g_2(u_n)-g_2(u)|^p\right)^{\frac{p-1}{p}}\left(\int_{\mathbb{R}^3}|v_n-v|^p\right)^{\frac{1}{p}}\\
	&\leq C\|v_n-v\|_E\left[\left(\int_{\mathbb{R}^3}|g_1(u_n)-g_1(u)|^2\right)^{\frac{1}{2}}+\left(\int_{\mathbb{R}^3}|g_2(u_n)-g_2(u)|^p\right)^{\frac{p-1}{p}}\right],
	\end{split}
	\end{equation*}
	which, jointly with \eqref{eq3.3}, implies
	\begin{equation*}
	\begin{split}
	\|v_n-v\|_E&\leq C\Bigg[\left|\int_{\mathbb{R}^3}|\nabla u|^2-\int_{\mathbb{R}^3}|\nabla u_n|^2\right|\|v\|_E+\left(\int_{\mathbb{R}^3}|g_1(u_n)-g_1(u)|^2\right)^{\frac{1}{2}}\\
	&\quad+\left(\int_{\mathbb{R}^3}|g_2(u_n)-g_2(u)|^p\right)^{\frac{p-1}{p}}\Bigg].
	\end{split}
	\end{equation*}
	Therefore, noting that $u_n\to u$ in $E$ and by the dominated convergence theorem, one has $\|v_n-v\|_E\to 0$ as $n\to\infty$. This proof is completed.
\end{proof}

Now we summarize some properties of the operator $A$ which are useful to study our problem.
\begin{lemma}\label{lem3.1}
	\
	
	\begin{itemize}
		\item [$\rm{(i)}$] $\langle I'(u),u-A(u)\rangle\geq\|u-A(u)\|_E^2$ for all $u\in E$.
		\item [$\rm{(ii)}$] $\|I'(u)\|\leq\|u-A(u)\|_E(1+C\|u\|_E^2)$ for some $C>0$ and all $u\in E$ .
		\item [$\rm{(iii)}$] For $M>0$ and $\alpha>0$, there exists $\beta>0$ such that $\|u-A(u)\|_E\geq \beta$ for any $u\in E$ with $|I(u)|\leq M$ and $\|I'(u)\|\geq \alpha$.
		\item [$\rm{(iv)}$] If $f$ is odd, then so is $A$.
	\end{itemize}
\end{lemma}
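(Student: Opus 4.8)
The plan rests on a single algebraic identity. Write $\theta(u)=a+b\int_{\mathbb{R}^3}|\nabla u|^2$ and $v=A(u)$, so that by \eqref{eq3.1}, for every $\varphi\in E$,
$$\theta(u)\int_{\mathbb{R}^3}\nabla v\cdot\nabla\varphi+\int_{\mathbb{R}^3}V(x)v\varphi=\int_{\mathbb{R}^3}f(u)\varphi.$$
Substituting $\int_{\mathbb{R}^3}f(u)\varphi$ from this relation into the formula for $\langle I'(u),\varphi\rangle$ and absorbing the gradient terms $a\int\nabla u\cdot\nabla\varphi+b\int|\nabla u|^2\int\nabla u\cdot\nabla\varphi$ into $\theta(u)\int\nabla u\cdot\nabla\varphi$, I would first establish
\begin{equation*}
\langle I'(u),\varphi\rangle=\theta(u)\int_{\mathbb{R}^3}\nabla(u-v)\cdot\nabla\varphi+\int_{\mathbb{R}^3}V(x)(u-v)\varphi,\qquad\varphi\in E.\tag{$\star$}
\end{equation*}
This is the engine for (i)--(iii). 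For (i), put $\varphi=u-v=u-A(u)$ in $(\star)$; the right-hand side becomes $\theta(u)\int_{\mathbb{R}^3}|\nabla(u-v)|^2+\int_{\mathbb{R}^3}V(x)|u-v|^2$, and since $\theta(u)\geq a$ this is at least $\|u-A(u)\|_E^2$.

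For (ii) I estimate the right-hand side of $(\star)$. The first two terms are exactly the $E$-inner product $(u-v,\varphi)_E$, bounded by $\|u-v\|_E\|\varphi\|_E$. Splitting $\theta(u)=a+b\int|\nabla u|^2$, the remaining piece $b\int_{\mathbb{R}^3}|\nabla u|^2\int_{\mathbb{R}^3}\nabla(u-v)\cdot\nabla\varphi$ is controlled by Cauchy--Schwarz together with $\int|\nabla w|^2\leq a^{-1}\|w\|_E^2$, giving a bound $\tfrac{b}{a^2}\|u\|_E^2\|u-v\|_E\|\varphi\|_E$. Taking the supremum over $\|\varphi\|_E=1$ yields (ii) with $C=b/a^2$.

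Part (iii) is the main obstacle, because the nonlocal term obstructs the naive boundedness argument; here the hypothesis $\mu>4$ is essential. I would argue by contradiction: suppose $u_n$ satisfy $|I(u_n)|\leq M$, $\|I'(u_n)\|\geq\alpha$, yet $w_n:=u_n-A(u_n)$ has $\|w_n\|_E\to0$. By (ii) it suffices to bound $\{u_n\}$ in $E$, for then $\|I'(u_n)\|\leq\|w_n\|_E(1+C\|u_n\|_E^2)\to0$, contradicting $\alpha>0$. To bound $\|u_n\|_E$ I use the Ambrosetti--Rabinowitz identity
$$I(u)-\tfrac1\mu\langle I'(u),u\rangle=\Big(\tfrac12-\tfrac1\mu\Big)\|u\|_E^2+b\Big(\tfrac14-\tfrac1\mu\Big)\Big(\int_{\mathbb{R}^3}|\nabla u|^2\Big)^2+\int_{\mathbb{R}^3}\Big(\tfrac1\mu f(u)u-F(u)\Big),$$
where $(f_3)$ makes the last integral nonnegative and $\mu>4$ makes both coefficients positive. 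On the other hand, taking $\varphi=u$ in $(\star)$ and estimating gives, with $g_n:=\int_{\mathbb{R}^3}|\nabla u_n|^2$,
$$|\langle I'(u_n),u_n\rangle|\leq\|w_n\|_E\|u_n\|_E+\tfrac{b}{\sqrt a}\,g_n^{3/2}\|w_n\|_E.$$
Combining the two displays, the quartic term $b(\tfrac14-\tfrac1\mu)g_n^2$ on the left dominates the nonlocal cubic term $\tfrac{b}{\mu\sqrt a}g_n^{3/2}\|w_n\|_E$ on the right, since their ratio is of order $\|w_n\|_E\,g_n^{-1/2}\to0$; absorbing the nonlocal terms leaves $(\tfrac12-\tfrac1\mu)\|u_n\|_E^2\leq M+\tfrac1\mu\|w_n\|_E\|u_n\|_E$, which forces $\{u_n\}$ bounded because $\|w_n\|_E\to0$. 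This is precisely where the sharper gradient-based estimate, rather than the crude $\|u\|_E^3$ bound, is needed, and where $\mu>4$ enters decisively.

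Finally, (iv) is immediate from uniqueness. If $f$ is odd then $\theta(-u)=\theta(u)$ and $f(-u)=-f(u)$, so $-A(-u)$ solves the very equation \eqref{eq3.1} defining $A(u)$; by the uniqueness established in Lemma \ref{lem3.00}, $A(-u)=-A(u)$.
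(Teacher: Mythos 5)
Your proof is correct and takes essentially the same route as the paper: your identity $(\star)$ with $\varphi=u-A(u)$ for (i), the bound with $C=b/a^2$ for (ii), the Ambrosetti--Rabinowitz identity combined with absorption of the nonlocal $g_n^{3/2}$ term into the quartic term $b\left(\tfrac14-\tfrac1\mu\right)g_n^2$ for (iii), and uniqueness of the solution of \eqref{eq3.1} for (iv). One small caveat in (iii): the ratio $\|w_n\|_E\,g_n^{-1/2}$ need not tend to $0$ if $g_n\to0$ along a subsequence, but in that case the cubic term itself vanishes, so either treat that trivial case separately or replace the ratio remark by Young's inequality, $g_n^{3/2}\|w_n\|_E\leq\varepsilon g_n^2+C(\varepsilon)\|w_n\|_E^4$, which is in effect what the paper's proof does via its estimates \eqref{eq3.5}--\eqref{eq3.6}.
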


\begin{proof}
	(i) Noting that $A(u)$ is the solution to \eqref{eq3.1}, for $u\in E$, it is easy to see that
	\begin{equation}\label{eq3.4}
	\begin{split}
	\langle I'(u),u-A(u)\rangle&=\left(a+b\int_{\mathbb{R}^3}|\nabla u|^2\right)\int_{\mathbb{R}^3}|\nabla (u-A(u))|^2\\
	&\quad+\int_{\mathbb{R}^3}V(x)|u-A(u)|^2\\
	&\geq\|u-A(u)\|_E^2.
	\end{split}
	\end{equation}
	
	(ii) For any $\varphi\in E$, By the H\"{o}lder inequality, one has
	\begin{equation*}
	\begin{split}
	\langle I'(u),\varphi\rangle&=(u-A(u),\varphi)_E+b\int_{\mathbb{R}^3}|\nabla u|^2\int_{\mathbb{R}^3}\nabla (u-A(u))\cdot\nabla \varphi\\
	&\leq\|u-A(u)\|_E\|\varphi\|_E+C\|u\|_E^2\|u-A(u)\|_E\|\varphi\|_E,
	\end{split}
	\end{equation*}
	which implies $\|I'(u)\|\leq\|u-A(u)\|_E(1+C\|u\|_E^2)$ for all $u\in E$, here $C=b/a^2>0$ is a constant.
	
	(iii) Since
	\begin{equation*}
	\langle I'(u),u\rangle=(u-A(u),u)_E+b\int_{\mathbb{R}^3}|\nabla u|^2\int_{\mathbb{R}^3}\nabla (u-A(u))\nabla u,
	\end{equation*}
	it follows from $(f_3)$ that
	\begin{equation*}
	\begin{split}
	&I(u)-\frac{1}{\mu}(u-A(u),u)_E\\
	&\quad=\left(\frac{1}{2}-\frac{1}{\mu}\right)\|u\|_E^2+\left(\frac{1}{4}-\frac{1}{\mu}\right)b\left(\int_{\mathbb{R}^3}|\nabla u|^2\right)^2\\
	&\qquad+\frac{b}{\mu}\int_{\mathbb{R}^3}|\nabla u|^2\int_{\mathbb{R}^3}\nabla (u-A(u))\nabla u+\int_{\mathbb{R}^3}\left(\frac{1}{\mu}f(u)u-F(u)\right)\\
	&\quad\geq\left(\frac{1}{2}-\frac{1}{\mu}\right)\|u\|_E^2+\left(\frac{1}{4}-\frac{1}{\mu}\right)b\left(\int_{\mathbb{R}^3}|\nabla u|^2\right)^2\\
	&\quad+\frac{b}{\mu}\int_{\mathbb{R}^3}|\nabla u|^2\int_{\mathbb{R}^3}\nabla u\cdot\nabla (u-A(u)).
	\end{split}
	\end{equation*}
	Consequently,
	\begin{equation}\label{eq3.5}
	\begin{split}
	\|u\|_E^2&+\left(\int_{\mathbb{R}^3}|\nabla u|^2\right)^2\\&\quad\leq C\Bigg(|I(u)|+\|u\|_E\|u-A(u)\|_E
	+\int_{\mathbb{R}^3}|\nabla u|^2\left|\int_{\mathbb{R}^3}\nabla u\cdot\nabla (u-A(u))\right|\Bigg).
	\end{split}
	\end{equation}
	By the H\"{o}lder inequality and Young inequality, for any $\varepsilon>0$,
	\begin{equation*}
	\begin{split}
	\int_{\mathbb{R}^3}|\nabla u|^2&\left|\int_{\mathbb{R}^3}\nabla u\cdot\nabla (u-A(u))\right|\\&\leq \int_{\mathbb{R}^3}|\nabla u|^2\left(\int_{\mathbb{R}^3}|\nabla u|^2\right)^{\frac{1}{2}}\left(\int_{\mathbb{R}^3}|\nabla (u-A(u))|^2\right)^{\frac{1}{2}}\\
	&\leq \varepsilon\left(\int_{\mathbb{R}^3}|\nabla u|^2\right)^2+C(\varepsilon)\|u\|_E^2\|u-A(u)\|_E^2.
	\end{split}
	\end{equation*}
	Then, for $\varepsilon$ small enough, from \eqref{eq3.5}, we have
	\begin{equation}\label{eq3.6}
	\begin{split}
	\|u\|_E^2\leq C_1(|I(u)|+\|u\|_E\|u-A(u)\|_E
	+\|u\|_E^2\|u-A(u)\|_E^2).
	\end{split}
	\end{equation}
	Arguing indirectly, suppose that there exists $\{u_n\}\subset E$ with $|I(u_n)|\leq M$ and $\|I'(u_n)\|\geq \alpha$ such that $\|u_n-A(u_n)\|_E\to0$ as $n\to\infty$. Then it follows from \eqref{eq3.6} that $\{\|u_n\|_E\}$ is bounded. Thus, by (ii), one concludes that $\|I'(u_n)\|\to0$ as $n\to\infty$, which is a contradiction.
	
	The conclusion (iv) is obviously, and the proof is completely.
\end{proof}

Here and in the sequel, define the convex cones $$P^+:=\{u\in E:u\geq0\}\quad \textrm{and}\quad P^-:=\{u\in E:u\leq0\}.$$
For $\varepsilon>0$, we denote $$P_\varepsilon^+:=\{u\in E:\textrm{dist}(u,P^+)<\varepsilon\}\quad \textrm{and}\quad P_\varepsilon^-:=\{u\in E:\textrm{dist}(u,P^-)<\varepsilon\},$$
where $\textrm{dist}(u,P^\pm)=\inf_{v\in P^{\pm}}\|u-v\|_{E}$. Obviously, $P_\varepsilon^-=-P_{\varepsilon}^+.$ Let $W:=P_{\varepsilon}^+\cup P_{\varepsilon}^-.$ It is easy to see that $W$ is an open and symmetric subset of $E$ and $Q:=E\setminus W$ contains only sign-changing functions. The following result shows that for $\varepsilon$ small, all sign-changing solutions to \eqref{eq1} are contained in $Q$.

\begin{lemma}\label{lem3.2}
	There exists $\varepsilon_0>0$ such that for any $\varepsilon\in(0,\varepsilon_0]$, there holds
	$$A(\overline{P}^\pm_\varepsilon)\subset P^\pm_\varepsilon.$$ Moreover, every nontrivial solutions $u\in P^+_\varepsilon$ and $u\in P^-_\varepsilon$ of \eqref{eq1} are positive and negative, respectively.
\end{lemma}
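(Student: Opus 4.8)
The plan is to reduce the set-invariance $A(\overline{P}^\pm_\varepsilon)\subset P^\pm_\varepsilon$ to a single quantitative estimate of the form $\textrm{dist}(A(u),P^-)\le \tfrac12\,\textrm{dist}(u,P^-)$, valid whenever $\textrm{dist}(u,P^-)$ is small, and then to read off both conclusions from it. I shall only treat $P^-$; the case of $P^+$ follows by symmetry, since $A$ is odd by Lemma \ref{lem3.1}(iv), so $A(\overline{P}^+_\varepsilon)=-A(\overline{P}^-_\varepsilon)\subset -P^-_\varepsilon=P^+_\varepsilon$.

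Fix $u\in E$, write $v=A(u)$, $v^\pm=\max\{\pm v,0\}$ and $\beta=a+b\int_{\mathbb{R}^3}|\nabla u|^2\ge a$. Since $-(A(u))^-\in P^-$, one always has $\textrm{dist}(A(u),P^-)\le\|v^+\|_E$, so it suffices to estimate $\|v^+\|_E$. Testing \eqref{eq3.1} with $v^+\in E$ and using $\nabla v\cdot\nabla v^+=|\nabla v^+|^2$ and $vv^+=(v^+)^2$ a.e. gives
\[
\|v^+\|_E^2\le \beta\int_{\mathbb{R}^3}|\nabla v^+|^2+\int_{\mathbb{R}^3}V(x)(v^+)^2=\int_{\mathbb{R}^3}f(u)v^+ .
\]
The main obstacle is to bound the right-hand side by $\textrm{dist}(u,P^-)$ rather than by $\|u\|_E$; because $f$ is merely continuous, one cannot Lipschitz-compare $f(u)$ with $f$ of the projection of $u$ onto $P^-$. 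Instead the sign structure is decisive: from $(f_3)$ we have $f(t)t\ge\mu F(t)>0$ for $t\ne0$, so $f(t)$ has the sign of $t$, whence $f(u)v^+\le0$ on $\{u<0\}$ (as $v^+\ge0$), while on $\{u\ge0\}$ one has $u=u^+$. Thus $\int_{\mathbb{R}^3}f(u)v^+\le\int_{\mathbb{R}^3}f(u^+)v^+\le\int_{\mathbb{R}^3}|f(u^+)|v^+$, and now \eqref{eq3.0} with the Sobolev inequalities yields
\[
\|v^+\|_E^2\le \delta\|u^+\|_2\|v^+\|_2+C_\delta\|u^+\|_p^{p-1}\|v^+\|_p\le C\bigl(\delta\|u^+\|_2+C_\delta\|u^+\|_p^{p-1}\bigr)\|v^+\|_E .
\]

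To convert the $L^s$-norms of $u^+$ into $\textrm{dist}(u,P^-)$, note the pointwise inequality $u^+\le|u-w|$ for every $w\in P^-$, whence $\|u^+\|_s\le C_s\|u-w\|_E$ for $s\in[2,6]$ by Sobolev embedding, and taking the infimum over $w\in P^-$ gives $\|u^+\|_s\le C_s\,\textrm{dist}(u,P^-)$. Writing $d=\textrm{dist}(u,P^-)$, the previous display becomes $\|v^+\|_E\le C'\delta\,d+C''_\delta\,d^{p-1}$. Since $p>2$, I first fix $\delta$ so small that $C'\delta\le\tfrac14$, and then choose $\varepsilon_0>0$ so small that $C''_\delta\varepsilon_0^{p-2}\le\tfrac14$; for every $\varepsilon\in(0,\varepsilon_0]$ and every $u$ with $d\le\varepsilon$ this gives $\textrm{dist}(A(u),P^-)\le\|v^+\|_E\le\tfrac12 d\le\tfrac12\varepsilon<\varepsilon$. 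As $\overline{P}^-_\varepsilon=\{u:\textrm{dist}(u,P^-)\le\varepsilon\}$, this is exactly $A(\overline{P}^-_\varepsilon)\subset P^-_\varepsilon$.

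Finally, the ``moreover'' statement follows from the contraction factor $\tfrac12$. If $u\in P^-_\varepsilon$ is a nontrivial solution of \eqref{eq1} with $\varepsilon\le\varepsilon_0$, then $u=A(u)$, so the estimate above forces $\textrm{dist}(u,P^-)=\textrm{dist}(A(u),P^-)\le\tfrac12\,\textrm{dist}(u,P^-)$, hence $\textrm{dist}(u,P^-)=0$ and $u\in P^-$, i.e.\ $u\le0$. Since $u\not\equiv0$ solves $-\beta\Delta u+V(x)u=f(u)$ with $f(u)\le0$ and $V\ge V_0>0$, the strong maximum principle applied to $-u\ge0$ forces $u<0$; the positivity of solutions lying in $P^+_\varepsilon$ is identical. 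I expect the only delicate points beyond the sign trick to be routine: the nonlocal coefficient $\beta$ plays no adverse role, as $\beta\ge a>0$ merely improves the coercive term.
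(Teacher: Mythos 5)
Your proof is correct and takes essentially the same route as the paper's: test the auxiliary equation \eqref{eq3.1} with the ``wrong-signed'' part of $A(u)$, drop the nonnegative nonlocal term, use the sign of $f$ coming from $(f_3)$ together with \eqref{eq3.0} and the estimate $\|u^{\pm}\|_s\le C(s)\,\mathrm{dist}(u,P^{\mp})$ to get the contraction $\mathrm{dist}(A(u),P^{\mp})\le\tfrac12\,\mathrm{dist}(u,P^{\mp})$ for suitable $\delta$ and $\varepsilon_0$, and then conclude via the fixed-point property and the strong maximum principle. The only cosmetic differences are that you argue on $P^-$ and transfer to $P^+$ by oddness of $A$ (valid under $(f_4)$, whereas the paper simply repeats the symmetric argument), and that you spell out the sign reasoning behind $\int_{\mathbb{R}^3} f(u)v^{+}\le\int_{\mathbb{R}^3} f(u^{+})v^{+}$, which the paper leaves implicit.
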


\begin{proof}
	We only prove the case $A(\overline{P}^+_\varepsilon)\subset P^+_\varepsilon$, because the other case can be obtained similarly. We write $u\in E$ as $u=u^-+u^+$, where $u^+=\max\{u,0\}$ and $u^-=\min\{u,0\}$. For $u\in E$, denote $v=A(u)$. By the Sobolev embedding theorem, for any $r\in[2,6]$, there exists $C(r)>0$ such that
	\begin{equation*}
	\begin{split}
	\|u^-\|_r=\inf_{\phi\in P^{+}}\|u-\phi\|_r\leq C(r)\inf_{\phi\in P^{+}}\|u-\phi\|_E=C(r)\textrm{dist}(u,P^+).
	\end{split}
	\end{equation*}
	Then, noting that $\textrm{dist}(v,P^+)\leq\|v^-\|_E$, by \eqref{eq3.0} and H\"{o}lder inequality, we have
	\begin{equation*}
	\begin{split}
	\textrm{dist}(v,P^+)\|v^-\|_E\leq \|v^-\|_E^2&=(v,v^-)_E\\
	&=\int_{\mathbb{R}^3}f(u)v^--b\int_{\mathbb{R}^3}|\nabla u|^2\int_{\mathbb{R}^3}\nabla v\cdot\nabla v^-\\
	&\leq\int_{\mathbb{R}^3}f(u)v^-\leq\int_{\mathbb{R}^3}f(u^-)v^-\\
	&\leq\int_{\mathbb{R}^3}(\delta|u^-|+C_\delta|u^-|^{p-1})|v^-|\\
	&\leq \delta\|u^-\|_2\|v^-\|_2+C_\delta\|u^-\|_p^{p-1}\|v^-\|_p\\
	&\leq C(\delta\textrm{dist}(u,P^+)+C_\delta\textrm{dist}(u,P^+)^{p-1})\|v^-\|_E,
	\end{split}
	\end{equation*}
	which implies that $$\textrm{dist}(v,P^+)\leq C(\delta\textrm{dist}(u,P^+)+C_\delta\textrm{dist}(u,P^+)^{p-1}).$$
	Letting $\delta=\frac{1}{4C}$ and taking $\varepsilon_0\in(0,\frac{1}{(4CC_\delta)^{1/(p-2)}})$, for any $\varepsilon\in (0,\varepsilon_0]$, one has
	$$\textrm{dist}(v,P^+)\leq\frac{1}{2}\textrm{dist}(u,P^+)<\varepsilon, \quad \textrm{for\ any\ } u\in \overline{P}_\varepsilon^+.$$
	This implies that $A(\overline{P}^+_\varepsilon)\subset P^+_\varepsilon$. If there exists $u\in P_\varepsilon^+$ such that $A(u)=u,$ then $\textrm{dist}(u,P^+)=0$, i.e. $u\in P^+$. Moreover, if $u\not\equiv0$, by the maximum principle, $u>0$ in $\mathbb{R}^3$.
\end{proof}

Note that the operator $A$ is merely continuous. Denote the set of critical points of $I$ by $K$. In order to construct a descending flow for $I$, we need to construct a locally Lipschitz continuous operator on $E_0:= E\setminus K$ which inherits the main properties of $A$. Together with Lemma \ref{lem3.2} and following the same way as in the proof of Lemma 2.1 in \cite{Bartsch2005}, we have the following results.
\begin{lemma}\label{lem3.3}
	There exists a locally Lipschitz continuous operator $B: E_0\to E$ with the following properties:
	\begin{itemize}
		\item [$\rm{(1)}$] $B(\overline{P}^\pm_\varepsilon)\subset P^\pm_\varepsilon$ for all $\varepsilon\in(0,\varepsilon_0]$;
		\item [$\rm{(2)}$] $\frac{1}{2}\|u-B(u)\|_E\leq\|u-A(u)\|_E\leq2\|u-B(u)\|_E$ for all $u\in E_0$;
		\item [$\rm{(3)}$] $\langle I'(u),u-B(u)\rangle\geq\frac{1}{2}\|u-A(u)\|_E^2$ for all $u\in E_0$;
		\item [$\rm{(4)}$] If $f$ is odd, then so is $B$.
	\end{itemize}
\end{lemma}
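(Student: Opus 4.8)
The plan is to manufacture $B$ by regularizing the merely continuous map $A$ through a locally finite covering of $E_0=E\setminus K$ together with a subordinate locally Lipschitz partition of unity, freezing $A$ at the centers of the covering balls, and then to read off the four properties from the corresponding properties of $A$ already recorded in Lemmas \ref{lem3.1} and \ref{lem3.2}. The organizing observation is that the two quantitative conclusions (2) and (3) both follow from the single pointwise estimate
\begin{equation}\label{eqstar}
\|A(u)-B(u)\|_E\leq\frac{\|u-A(u)\|_E}{2(1+C\|u\|_E^2)},\qquad u\in E_0,
\end{equation}
where $C$ is the constant from Lemma \ref{lem3.1}(ii); so the whole construction is arranged to force \eqref{eqstar}.

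First I would build the covering. Since $A$ is continuous (Lemma \ref{lem3.00}) and the fixed points of $A$ are exactly the critical points, every $w\in E_0$ satisfies $w\neq A(w)$, so the right-hand side of \eqref{eqstar} is positive at $u=w$. By continuity of $A$ and of $u\mapsto\|u-A(u)\|_E/(2(1+C\|u\|_E^2))$, for each $w\in E_0$ I can pick an open ball $N_w\subset E_0$ of radius $r_w$ centered at $w$, so small that $\|A(u)-A(w)\|_E\leq\|u-A(u)\|_E/(2(1+C\|u\|_E^2))$ for all $u\in N_w$; in addition I would shrink $r_w$ to satisfy $r_w<\tfrac12\,\mathrm{dist}(w,P^+)$ whenever $\mathrm{dist}(w,P^+)>0$ and $r_w<\tfrac12\,\mathrm{dist}(w,P^-)$ whenever $\mathrm{dist}(w,P^-)>0$ (at least one of these distances is positive on $E_0$, as $P^+\cap P^-=\{0\}\notin E_0$). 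As $E_0$ is an open subset of a metric space, hence paracompact, I pass to a locally finite refinement $\{N_{w_j}\}_{j\in\Lambda}$ with a subordinate locally Lipschitz partition of unity $\{\pi_j\}$ and set $B(u):=\sum_{j\in\Lambda}\pi_j(u)A(w_j)$. Local finiteness makes $B$ locally Lipschitz, and since every active index satisfies $u\in N_{w_j}$, writing $A(u)-B(u)=\sum_j\pi_j(u)(A(u)-A(w_j))$ as a convex combination yields \eqref{eqstar}.

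Granting \eqref{eqstar}, property (2) is immediate from the triangle inequality: $\|u-B(u)\|_E$ lies between $\|u-A(u)\|_E-\|A(u)-B(u)\|_E\geq\tfrac12\|u-A(u)\|_E$ and $\|u-A(u)\|_E+\|A(u)-B(u)\|_E\leq\tfrac32\|u-A(u)\|_E$. For (3) I split $\langle I'(u),u-B(u)\rangle=\langle I'(u),u-A(u)\rangle+\langle I'(u),A(u)-B(u)\rangle$ and combine Lemma \ref{lem3.1}(i),(ii) with \eqref{eqstar} to get
\begin{equation*}
\langle I'(u),u-B(u)\rangle\geq\|u-A(u)\|_E^2-\|u-A(u)\|_E(1+C\|u\|_E^2)\cdot\frac{\|u-A(u)\|_E}{2(1+C\|u\|_E^2)}=\frac12\|u-A(u)\|_E^2.
\end{equation*}
For the cone invariance (1) I would use that each $P^\pm_\varepsilon$ is convex (the distance to the convex cone $P^\pm$ is a convex function) and that, by the proof of Lemma \ref{lem3.2}, $A$ contracts toward the cones, $\mathrm{dist}(A(w),P^\pm)\leq\tfrac12\,\mathrm{dist}(w,P^\pm)$ on $\overline{P}^\pm_{\varepsilon_0}$. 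Fix $u\in\overline{P}^+_\varepsilon$; each active $w_j$ satisfies $u\in N_{w_j}$. If $w_j\in P^+$ then $\mathrm{dist}(A(w_j),P^+)=0$, so $A(w_j)\in P^+\subset P^+_\varepsilon$; if $w_j\notin P^+$ then $r_j<\tfrac12\,\mathrm{dist}(w_j,P^+)$ together with $\mathrm{dist}(w_j,P^+)\leq\mathrm{dist}(u,P^+)+\|u-w_j\|_E<\varepsilon+r_j$ forces $\mathrm{dist}(w_j,P^+)<2\varepsilon$, whence $\mathrm{dist}(A(w_j),P^+)<\varepsilon$, i.e. $A(w_j)\in P^+_\varepsilon$. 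Since $B(u)$ is a convex combination of these $A(w_j)$ and $P^+_\varepsilon$ is convex, $B(u)\in P^+_\varepsilon$; the case of $P^-$ is identical, and by taking the $\varepsilon_0$ of Lemma \ref{lem3.2} twice as large at the outset the argument covers all $\varepsilon\in(0,\varepsilon_0]$. Finally, for (4), when $f$ is odd $I$ is even, so $K$ and $E_0$ are symmetric and $A$ is odd (Lemma \ref{lem3.1}(iv)); replacing $B$ by $u\mapsto\tfrac12(B(u)-B(-u))$ produces an odd operator, and since $A(-u)=-A(u)$, $\|{-u}-A(-u)\|_E=\|u-A(u)\|_E$ and $I'(-u)=-I'(u)$, the estimate \eqref{eqstar} and hence (1)--(3) are preserved.

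The hard part will be property (1): reconciling one fixed locally Lipschitz operator $B$ with the whole family of cone conditions indexed by $\varepsilon\in(0,\varepsilon_0]$. A plain convex-combination regularization cannot by itself return $B(u)$ to the cone when $u$ lies on $P^\pm$ (which \eqref{eqstar} alone does not guarantee), so the decisive devices are the \emph{quantitative} contraction of $A$ toward the cones extracted from Lemma \ref{lem3.2} and the radius conditions $r_w<\tfrac12\,\mathrm{dist}(w,P^\pm)$, which prevent any covering ball centered off a cone from reaching it and thereby keep every active frozen value inside the convex set $P^\pm_\varepsilon$.
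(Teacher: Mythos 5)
Your construction is correct and is essentially the paper's intended proof: the paper disposes of this lemma by invoking Lemma 2.1 of Bartsch--Liu--Weth \cite{Bartsch2005} together with Lemma \ref{lem3.2}, and that cited argument is precisely your partition-of-unity regularization with frozen values $A(w_j)$, the single quantitative estimate $\|A(u)-B(u)\|_E\leq\|u-A(u)\|_E/\bigl(2(1+C\|u\|_E^2)\bigr)$ yielding (2) and (3) via Lemma \ref{lem3.1}(i)--(ii), convexity of $P^\pm_\varepsilon$ combined with the half-contraction $\mathrm{dist}(A(w),P^\pm)\leq\frac{1}{2}\mathrm{dist}(w,P^\pm)$ extracted from the proof of Lemma \ref{lem3.2} for the cone invariance, and a symmetrized construction for oddness. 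The only cosmetic deviations --- shrinking $\varepsilon_0$ by a fixed factor so the contraction region covers $\mathrm{dist}(w_j,P^\pm)<2\varepsilon$, and the unstated but immediate observation that (1) survives the symmetrization $u\mapsto\frac{1}{2}(B(u)-B(-u))$ because $B(u)$ and $-B(-u)$ both lie in the convex set $P^\pm_\varepsilon$ --- are harmless.
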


Set $H(u)=u-B(u)$. For $u\in E_0$, consider the following initial value problems:
\begin{equation}\label{eq3.8}
\left\{\begin{array}{lcl}\frac{d}{dt}\sigma(t,u)=-H(\sigma(t,u)),\\
\sigma(0,u)=u.
\end{array}\right.
\end{equation}
Invoking the locally Lipschitz continuity of $B$, the existence and uniqueness theory of ODE implies that problem \eqref{eq3.8} has a unique solution, denoted by $\sigma(t,u)$ with maximal interval of existence $[0,T(u))$. By Lemma \ref{lem3.3}(3), it is easy to check that $I(\sigma(t,u))$ is strictly decreasing in $[0,T(u))$. Moreover, with the help of Lemma \ref{lem3.3}, using similar arguments to the proof of Lemma 3.2 in \cite{Bartsch2005}, we have the following result.

\begin{lemma}\label{lem3.4}
	For any $\varepsilon\in(0,\varepsilon_0]$ and $u\in \overline{P}^\pm_\varepsilon\setminus K$, $\sigma(t,u)\in P^\pm_\varepsilon$ for all $t\in(0,T(u))$. Here $\varepsilon_0$ is given by Lemma \ref{lem3.2}.
\end{lemma}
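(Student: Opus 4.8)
The plan is to prove the positive invariance of $P^+_\varepsilon$ under the descending flow; the case of $P^-_\varepsilon$ is identical (or follows at once from $P^-_\varepsilon=-P^+_\varepsilon$ together with the oddness of $B$ in Lemma \ref{lem3.3}(4)). Fix $u\in\overline{P}^+_\varepsilon\setminus K$ and set $\varphi(t):=\textrm{dist}(\sigma(t,u),P^+)$, so that the claim $\sigma(t,u)\in P^+_\varepsilon$ is exactly $\varphi(t)<\varepsilon$ for $t\in(0,T(u))$. Two facts drive the argument. First, since $I(\sigma(t,u))$ is strictly decreasing along the flow, $\sigma(t,u)$ never meets $K$, so it stays in $E_0$ and $B(\sigma(t,u))$ is well defined for all $t\in[0,T(u))$. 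Second, Lemma \ref{lem3.3}(1) provides the crucial \emph{strict} inequality $\textrm{dist}(B(w),P^+)<\varepsilon$ for every $w\in\overline{P}^+_\varepsilon$, which is the quantitative expression of ``the field $-H=B-\mathrm{id}$ points inward'' on $\partial P^+_\varepsilon$.

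The first step is to derive a differential inequality for $\varphi$. From \eqref{eq3.8} we have $\frac{d}{dt}\sigma(t,u)=-H(\sigma(t,u))=B(\sigma(t,u))-\sigma(t,u)$, hence $\sigma(t+h,u)=(1-h)\sigma(t,u)+hB(\sigma(t,u))+o(h)$ as $h\to0^+$. Using that $\textrm{dist}(\cdot,P^+)$ is $1$-Lipschitz and that it is convex (because $P^+$ is a convex cone), I would estimate
\[
\varphi(t+h)\leq(1-h)\varphi(t)+h\,\textrm{dist}(B(\sigma(t,u)),P^+)+o(h),
\]
which, after dividing by $h$ and passing to the upper Dini derivative $D^+\varphi$, yields $D^+\varphi(t)\leq-\varphi(t)+\textrm{dist}(B(\sigma(t,u)),P^+)$ for all $t\in[0,T(u))$.

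The second step upgrades the pointwise inequality $\textrm{dist}(B(w),P^+)<\varepsilon$ to genuine invariance by a Gronwall/comparison argument. As long as $\varphi(s)\leq\varepsilon$ on $[0,t]$, we have $\sigma(s,u)\in\overline{P}^+_\varepsilon$, so $g(s):=\textrm{dist}(B(\sigma(s,u)),P^+)<\varepsilon$, and the above gives $D^+\varphi(s)\leq-\varphi(s)+g(s)$. Setting $\psi(t):=e^{t}\varphi(t)$ one computes $D^+\psi(t)\leq e^{t}g(t)$, whence $e^{t}\varphi(t)\leq\varphi(0)+\int_0^t e^{s}g(s)\,ds<\varphi(0)+\varepsilon(e^{t}-1)\leq\varepsilon e^{t}$, using $\varphi(0)\leq\varepsilon$ and $g(s)<\varepsilon$; thus $\varphi(t)<\varepsilon$. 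To make this rigorous globally I would introduce $\tau:=\sup\{t\in[0,T(u)):\varphi(s)\leq\varepsilon\ \forall\,s\in[0,t]\}$. One checks $\tau>0$ (if $\varphi(0)<\varepsilon$ this is immediate from continuity; if $\varphi(0)=\varepsilon$ it follows since $D^+\varphi(0)\leq-\varepsilon+g(0)<0$), the displayed estimate forces $\varphi(t)<\varepsilon$ on $(0,\tau)$, and a continuity argument shows that $\tau<T(u)$ would contradict the maximality of $\tau$. Hence $\tau=T(u)$ and $\varphi(t)<\varepsilon$ throughout $(0,T(u))$, as desired.

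The main obstacle is not the geometry but its rigorous analytic packaging: since $\varphi$ is only Lipschitz (not $C^1$), the differential inequality must be handled at the level of the Dini derivative, and the passage from the strict interior condition of Lemma \ref{lem3.3}(1) to invariance requires the comparison estimate above rather than a naive monotonicity claim (note $-\varphi(t)+\varepsilon$ alone is not sign-definite). Care is also needed to ensure the flow remains in $E_0$ so that $B(\sigma(t,u))$ stays defined, which is exactly where the strict decrease of $I$ along $\sigma$ is used. This is precisely the scheme carried out for Lemma 3.2 in \cite{Bartsch2005}, adapted here to the operator $B$ furnished by Lemma \ref{lem3.3}.
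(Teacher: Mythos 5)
Your proof is correct and takes essentially the same route as the paper, which does not write out the argument but simply appeals to Lemma 3.2 of \cite{Bartsch2005}: the convexity of $u\mapsto\mathrm{dist}(u,P^+)$, the Dini-derivative differential inequality $D^+\varphi(t)\leq-\varphi(t)+\mathrm{dist}(B(\sigma(t,u)),P^+)$, and the Gronwall-type comparison using $B(\overline{P}^+_\varepsilon)\subset P^+_\varepsilon$ from Lemma \ref{lem3.3}(1) are precisely the contents of that cited proof. You in fact supply more detail (the continuation argument via $\tau$ and the case $\varphi(0)=\varepsilon$) than the paper, which delegates these steps to the reference.
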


\begin{remark}
	since $\partial P^\pm_\varepsilon \cap K=\emptyset$ by Lemma \ref{lem3.2}, the above lemma implies that $\sigma(t,u)\in P^\pm_\varepsilon$ for all $t\in(0,T(u))$ and $u\in \partial P^\pm_\varepsilon$. In the following, we may choose an $\varepsilon>0$ sufficiently small such that $W=P^+_{\varepsilon}\cup P^-_{\varepsilon}$ is an invariant set with respect to $\sigma$.
\end{remark}

In order to construct nodal solutions by using the combination of invariant sets method and minimax method, we need a deformation lemma in the presence of invariant sets. In fact, we have the following result.

\begin{lemma}\label{lem3.5}
	Let $N$ be an open symmetric neighborhood of $K_{c}\setminus W$. Then there exists an $\varepsilon_0>0$, such that for any $0<\varepsilon<\varepsilon_0$, there exists $\eta\in C([0,1]\times E,E)$ satisfying:
	\begin{itemize}
		\item [$(\rm{i})$] $\eta(t,u)=u$ for $t=0$ or $u\notin I^{-1}([c-\varepsilon_0,c+\varepsilon_0]);$
		\item [$(\rm{ii})$] $\eta(1,I^{c+\varepsilon}\cup W\setminus N)\subset I^{c-\varepsilon}\cup W$ and $\eta(1,I^{c+\varepsilon}\cup W)\subset I^{c-\varepsilon}\cup W$ if $K_{c}\setminus W=\emptyset$;
		\item [$(\rm{iii})$] $\eta(t,-u)=-\eta(t,u)$ for all $(t,u)\in[0,1]\times E$;
		\item [$(\rm{iv})$] $\eta(t,\overline{P}^\pm_\varepsilon)\subset \overline{P}^\pm_\varepsilon$ for any $t\in[0,1].$
	\end{itemize}
\end{lemma}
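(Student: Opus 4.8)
The plan is to build $\eta$ as a suitably time-rescaled, truncated version of the descending flow $\sigma$ defined by \eqref{eq3.8}. Because $A$ maps bounded sets into bounded sets (Lemma~\ref{lem3.00}) and $\tfrac12\|u-B(u)\|_E\le\|u-A(u)\|_E\le2\|u-B(u)\|_E$ by Lemma~\ref{lem3.3}(2), the field $H=\mathrm{id}-B$ is bounded on bounded sets, so after we localize it with a bounded cutoff the resulting ODE will have a solution defined for all $t\ge0$; rescaling the time variable then yields a map on $[0,1]\times E$. The descent of $I$ along the flow is supplied by Lemma~\ref{lem3.3}(3), while the invariance (iv) and the equivariance (iii) will be inherited from Lemma~\ref{lem3.4}, Lemma~\ref{lem3.3}(1) and the oddness in Lemma~\ref{lem3.3}(4).

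The first and main step is a uniform descent estimate. Using the (PS) condition I would first note that $K_c$ is compact and $K_c\setminus W\subset N$; hence there exist $\bar\varepsilon>0$ and $\beta>0$ with
\[
\|u-A(u)\|_E\ge\beta\qquad\text{for all } u\in I^{-1}([c-\bar\varepsilon,c+\bar\varepsilon])\setminus(N\cup W).
\]
This is proved by contradiction: a sequence $u_n$ in this set with $I(u_n)\to c$ and $\|u_n-A(u_n)\|_E\to0$ would, by Lemma~\ref{lem3.1}(iii), satisfy $\|I'(u_n)\|\to0$, hence be a (PS) sequence at level $c$; by the (PS) condition a subsequence converges to some $u\in K_c$, and since $W$ and $N$ are open with $K_c\setminus W\subset N$, this would force $u_n\in N\cup W$ for large $n$, a contradiction. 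Combining this bound with Lemma~\ref{lem3.3}(2)--(3) gives $\langle I'(u),u-B(u)\rangle\ge\tfrac12\beta^2$ on the same region, i.e. a definite rate of decrease of $I$ along $\sigma$ as long as the trajectory stays outside $N\cup W$.

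Next I would fix $\varepsilon_0\in(0,\bar\varepsilon)$ and $\varepsilon\in(0,\varepsilon_0)$ small (relating $\varepsilon$ to $\beta$ so that the drop $2\varepsilon$ is realizable in controlled time), choose a smaller symmetric neighborhood $N'$ of $K_c\setminus W$ with $\overline{N'}\subset N$, and introduce an even, locally Lipschitz cutoff $g:E\to[0,1]$ that equals $0$ on $(E\setminus I^{-1}([c-\varepsilon_0,c+\varepsilon_0]))\cup N'$ and $1$ on $I^{-1}([c-\varepsilon,c+\varepsilon])\setminus N$, constructed from distance functions and symmetrized so as to be even. Defining $\eta$ by integrating $\tfrac{d}{dt}\eta=-g(\eta)H(\eta)$ and normalizing the time interval to $[0,1]$, property (i) is immediate from the support of $g$ together with $\eta(0,u)=u$; property (iii) follows since $H$ is odd (Lemma~\ref{lem3.3}(4)) and $g$ is even; property (iv) holds because multiplying $H$ by the nonnegative scalar $g$ merely reparametrizes or freezes the trajectories of $\sigma$, so the invariance of $\overline{P}^\pm_\varepsilon$ recorded in Lemma~\ref{lem3.4} is preserved. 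Property (ii) is then obtained from the descent estimate: a point in $I^{c+\varepsilon}\cup W\setminus N$ either already lies in $W$ (and stays there by (iv)), or is driven downward at rate $\ge\tfrac12\beta^2$ until its energy falls below $c-\varepsilon$ or it enters $W$; the second inclusion in (ii) is the special case $N=\emptyset$, permissible when $K_c\setminus W=\emptyset$.

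The hard part will be the bookkeeping in this last step: one must verify that the normalized flow time is long enough to realize the full energy drop of $2\varepsilon$ for \emph{every} orbit that neither enters $W$ nor meets $N$, while ensuring that orbits passing near $K_c$ are safely captured by $N$ (or by $W$) so that the cutoff $g$ does not stall the descent before the target sublevel is reached. This forces $\varepsilon$ to be chosen sufficiently small relative to $\beta$ and $\varepsilon_0$, and requires checking that along any orbit leaving the region where $g\equiv1$ the energy has already decreased past $c-\varepsilon$. The invariance and equivariance, by contrast, are direct consequences of the corresponding properties of $B$ established in Lemmas~\ref{lem3.3} and \ref{lem3.4}.
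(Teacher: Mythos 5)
Your overall strategy (cutoff of the descending flow generated by $H=\mathrm{id}-B$, uniform lower bound on $\|u-A(u)\|_E$ away from $K_c$ via the $\rm(PS)$ condition and Lemma \ref{lem3.1}(iii), oddness and cone-invariance inherited from Lemma \ref{lem3.3} and Lemma \ref{lem3.4}) is the same family of argument as the paper's, but your sketch has a genuine gap precisely at the point you yourself flag as ``the hard part,'' and the mechanism you propose for it cannot work. The conclusion (ii) requires that \emph{every} orbit starting in $I^{c+\varepsilon}\cup W\setminus N$ end up in $I^{c-\varepsilon}\cup W$ at time $1$. With your unnormalized field $-g(\eta)H(\eta)$ and time interval normalized to $[0,1]$, there is no control whatsoever on the displacement of an orbit, so an orbit starting outside $N$ may re-enter $N$ and then the smaller set $N'$ where $g\equiv 0$, and freeze there at an energy level in $(c-\varepsilon,c+\varepsilon]$ outside $W$ --- violating (ii). Your phrase ``safely captured by $N$ (or by $W$)'' concedes exactly this scenario, but capture by $N$ is fatal, not helpful: nothing places such a stalled point in $I^{c-\varepsilon}\cup W$. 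A second, related flaw is the claim that localizing $H$ ``with a bounded cutoff'' yields global existence: $g\in[0,1]$ does not bound $g H$, and $H$ is only bounded on bounded sets while the support $I^{-1}([c-\varepsilon_0,c+\varepsilon_0])$ is in general unbounded in $E$, so finite-time blow-up of the ODE is not excluded by your argument.

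The paper closes both holes with one device that your proposal omits: the \emph{normalized} field. It splits $K_c^1:=K_c\cap W$, $K_c^2:=K_c\setminus W$, chooses $\delta$ with $N_{3\delta}(K_c^2)\subset N$ and $N_\delta(K_c^1)\subset W$ (where $N_a(G):=\{u:\mathrm{dist}(u,G)<a\}$), and integrates
$\frac{d}{dt}\xi=-\psi(\xi)H(\xi)/\|H(\xi)\|_E$, setting $\eta(t,u):=\xi(\delta t,u)$. Since the speed is at most $1$ and the total time is $\delta$, one gets $\|\eta(t,u)-u\|_E\le\delta$; hence an initial point outside $N_{3\delta}(K_c^2)$ can never reach $N_{2\delta}(K_c^2)$, where the cutoff degenerates, and any approach to $K_c^1$ already happens inside $W$, which is flow-invariant. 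This displacement bound is what makes the contradiction argument run: along an orbit avoiding $I^{c-\varepsilon}\cup W$ one has $\psi\equiv 1$ and $\|u-B(u)\|_E\ge\beta$ throughout, so the drop is at least $\beta\delta/8\ge 2\varepsilon_0>2\varepsilon$ once $\varepsilon_0\le\beta\delta/16$ --- a quantitative linkage between $\varepsilon_0$, $\beta$ and $\delta$ that your sketch defers rather than supplies. The normalization simultaneously makes the vector field bounded, settling global existence. Without it (or an equivalent short-time/bounded-speed device), your construction cannot be completed as written.
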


\begin{proof}
	The proof is similar to that of Lemma 5.1 in \cite{Liu2005}. We state the proof here for the readers convenience. For $G\subset E$ and $a>0$, we define $N_a(G):=\{u\in E:\textrm{dist}(u,G)<a\}$.
	Set $K_c^1:=K_c\cap W$ and $K_c^2:=K_c\setminus W$.  Due to the $\rm{(PS)}$ condition, $K_c^1$ is compact and therefore $d:=\textrm{dist}(K_c^1,Q)>0$. Choose $\delta\in(0,\frac{d}{2})$ small enough such that $N_{3\delta}(K_c^2)\subset N$. Obviously, $N_\delta(K_c^1)\subset W$. Since $I$ satisfies the $\rm{(PS)}$ condition, there exists $\varepsilon_0,\alpha>0$ such that
	$$\|I'(u)\|\geq \alpha,\quad\textrm{for\ } u\in I^{-1}([c-\varepsilon_0,c+\varepsilon_0])\setminus N_\delta(K_c).$$
	Then, from Lemmas \ref{lem3.1} and \ref{lem3.3}, there exists $\beta>0$ such that
	\begin{equation}\label{eq3.9}
	\|u-B(u)\|_E\geq \beta,\quad\textrm{for\ } u\in I^{-1}([c-\varepsilon_0,c+\varepsilon_0])\setminus N_\delta(K_c).
	\end{equation}
	Without loss of generality, we may assume that $\varepsilon_0\leq\frac{\beta\delta}{16}$.
	Define $$E_1:=I^{-1}([c-\varepsilon_0,c+\varepsilon_0])\setminus N_\delta(K_c^2),$$ and for any fixed $\varepsilon\in(0,\varepsilon_0)$, we set $$E_2:=I^{-1}([c-\varepsilon,c+\varepsilon])\setminus N_{2\delta}(K_c^2).$$ Let
	$$\psi(u)=\frac{\textrm{dist}(u,E\setminus E_1)}{\textrm{dist}(u,E\setminus E_1)+\textrm{dist}(u,E_2)}.$$
	Recall that $H(u)=u-B(u)$ for $u\in E_0=E\setminus K$. By Lemma \ref{lem3.3}, $\psi(\cdot)H(\cdot)$ is locally Lipschitz continuous on $E$. Consider the following initial value problem
	\begin{equation*}
	\frac{d}{dt}\xi(t,u)=-\frac{\psi(\xi(t,u))H(\xi(t,u))}{\|H(\xi(t,u))\|_E},\quad
	\xi(0,u)=u.
	\end{equation*}
	Then $\xi(t,u)$ is well-defined and continuous on $\mathbb{R}^+\times E$.
	
	Define $\eta(t,u):=\xi(\delta t,u)$. It suffices to check $(\rm{ii})$, because $(\rm{i})$ is obviously and $(\rm{iii})$-$(\rm{iv})$ are easily checked by Lemma \ref{lem3.3}(4) and \ref{lem3.4}. For $(\rm{ii})$, arguing indirectly, we suppose that $\eta(1,u)\notin I^{c-\varepsilon}\cup W$ for some $u\in I^{c+\varepsilon}\cup W\setminus N$. Then $\eta(t,u)\notin I^{c-\varepsilon}\cup W$ for all $t\in[0,1]$ since $I^{c-\varepsilon}\cup W$ is an invariant subset. As a result, $\eta(t,u)\notin N_\delta(K_c^1)$ for all $t\in[0,1]$ since $N_\delta(K_c^1)\subset W$. Noting that $$\|\eta(t,u)-u\|_E=\|\xi(\delta t,u)-u\|_E=\left\|\int_0^{\delta t}\xi'(s,u)ds\right\|_E\leq \delta, \quad\textrm{for\ any\ }t\in[0,1],$$
	we have $\eta(t,u)\notin N_{2\delta}(K_c^2)$ for all $t\in[0,1]$ due to $u\notin N_{3\delta}(K_c^2)$. Therefore, for all $t\in[0,1]$, $$\eta(t,u)\in I^{-1}([c-\varepsilon,c+\varepsilon])\setminus\left(N_{2\delta}(K_c^2)\cup N_\delta(K_c^1)\right).$$
	As a consequence, for $t\in[0,1]$, $$\psi(\eta(t,u))=1\quad\textrm{and}\quad \|u-B(u)\|_E\geq \beta.$$
	Hence, by Lemmas \ref{lem3.3}(2)-(3) and \eqref{eq3.9}, we have
	\begin{equation*}
	\begin{split}
	I(\eta(t,u))=I(u)+\int_0^1\frac{d}{ds}I(\eta(s,u))ds&\leq c+\varepsilon-\int_0^1\frac{\delta}{8}\|\eta(s,u)-B(\eta(s,u))\|_Eds\\
	&\leq c+\varepsilon-\frac{\beta\delta}{8}\leq c+\varepsilon-2\varepsilon_0<c-\varepsilon,
	\end{split}
	\end{equation*}
	a contradiction. The proof is completed.
\end{proof}

\subsection{The proof of Theorem \ref{thm1.1}}

Now we are in a position to prove Theorem \ref{thm1.1}.

\begin{proof}[Proof of Theorem \ref{thm1.1}] We adopt some techniques in the proof of Theorem 1.1 in \cite{Sun2014} (see also \cite{Liu2005,Rabinowitz1986}). We divide our proof into three steps.
	
	{\bf Step 1.} We define a minimax value $c_{k}$ for the functional $I(u)$ with $k=2,3,\cdots$.
	
	Define
	$$G_m:=\{h\in C(\mathcal{B}_{R_m}\cap E_m,E): h \textrm{\ is\ odd\ and\ } h=\text{id} \textrm{\ on\ }\partial\mathcal{B}_{R_m}\cap E_m\},$$
	where $R_m>0$ is given by Lemma \ref{lem3}. It is easy to see that $G_m\neq\emptyset$, because $\text{id}\in G_m$ for all $m\in\mathbb{N}$. For $k\geq2$, we set
	$$\Gamma_k:=\{h(\mathcal{B}_{R_m}\cap E_m\setminus Y):h\in G_m, m\geq k, Y=-Y \textrm{\ is\ open\ and\ } \gamma(Y)\leq m-k\},$$
	where $\gamma(M)$ denote the Krasnoselskii's genus of the set $M$ (cf. \cite{Struwe2000}).
	As Proposition 9.18 in \cite{Rabinowitz1986}, $\Gamma_k$ possess the following properties:
	\begin{itemize}
		\item [$(\rm{1^{\circ}})$] $\Gamma_k\neq\emptyset$ and $\Gamma_{k+1}\subset \Gamma_k$ for all $k\geq2$.
		\item [$(\rm{2^{\circ}})$] If $\phi\in C(E,E)$ is odd and $\phi=\text{id}$ on $\partial\mathcal{B}_{R_m}\cap E_m$, then $\phi(A)\in \Gamma_k$ if $A\in\Gamma_k$ for all $k\geq2$.
		\item [$(\rm{3^{\circ}})$] If $A\in\Gamma_k$, $Z=-Z$ is open and $\gamma(Z)\leq s<k$ and $k-s\geq2$, then $A\setminus Z\in \Gamma_{k-s}.$
	\end{itemize}
	
	Now, for $k\geq2$, we claim that for any $A\in \Gamma_k$, $A\cap Q\neq\emptyset$. Consider the attracting domain of $0$ in $E$:
	$$\mathcal{D}:=\{u\in E:\sigma(t,u)\to0, \textrm{\ as\ } t\to\infty\}.$$
	Since $0$ is a local minimum of $I$ and by the continuous dependence of ODE on initial data, $\mathcal{D}$ is open. Moreover, $\partial\mathcal{D}$ is an invariant set and $\overline{P^+_\varepsilon}\cap\overline{P^-_\varepsilon}\subset \mathcal{D}.$ Similar to Lemma 3.4 in \cite{Bartsch2004}, we have $$I(u)>0  \text{ for every } u\in\overline{P^+_\varepsilon}\cap\overline{P_\varepsilon^-}\setminus\{0\}.$$
	Given $k\geq2$, let $A\in\Gamma_k$, that is
	$$A=h(\mathcal{B}_{R_m}\cap E_m\setminus Y)$$ with $\gamma(Y)\leq m-k$. Define $$\mathcal{O}:=\{u\in \mathcal{B}_{R_m}\cap E_m:h(u)\in \mathcal{D}\}.$$ Obviously, $\mathcal{O}$ is a bounded open symmetric set with $0\in\mathcal{O}$ and $\overline{\mathcal{O}}\subset\mathcal{B}_{R_m}\cap E_m$. Therefore, by the Borsuk-Ulam theorem, $\gamma(\partial\mathcal{O})=m$. Moreover, by the continuity of $h$, $h(\partial\mathcal{O})\subset\partial\mathcal{D}$. Consequently, $$h(\partial\mathcal{O}\setminus Y)\subset A\cap\partial\mathcal{D}.$$ Thus, by using the ``monotone, sub-additive and supervariant" property of the genus (cf. Proposition 5.4 in \cite{Struwe2000}), we have $$\gamma(A\cap\partial\mathcal{D})\geq\gamma(h(\partial\mathcal{O}\setminus Y))\geq\gamma(\partial\mathcal{O}\setminus Y)\geq\gamma(\partial\mathcal{O})-\gamma(Y)\geq k.$$ Noting that $P^+_\varepsilon\cap P^-_\varepsilon\cap \partial\mathcal{D}=\emptyset$, one has $\gamma(W\cap \partial\mathcal{D})\leq1.$ Hence, for $k\geq2,$ we conclude that $$\gamma(A\cap Q\cap\partial\mathcal{D})\geq\gamma(A\cap\partial\mathcal{D})-\gamma(W\cap \partial\mathcal{D})\geq k-1\geq1,$$
	which implies
	\begin{equation}\label{eq32.4}
	A\cap Q\cap\partial\mathcal{D}\neq\emptyset.
	\end{equation} Then, it follows that $A\cap Q\neq\emptyset$ for any $A\in \Gamma_k$ with $k\geq2$. Thus, we finish the proof of the claim. Hence, for $k=2,3,\cdots$, we can define a minimax value $c_{k}$ by
	$$c_{k}:=\inf_{A\in \Gamma_k}\sup_{A\cap Q}I.$$ Choosing $\rho$ given in Lemma \ref{lem3.0} small enough if necessary, we have $\partial\mathcal{B}_\rho\subset \mathcal{D}$. Then by \eqref{eq32.4}, for any $A\in \Gamma_k$, one has
	$$\sup_{A\cap Q}I\geq\inf_{\partial\mathcal{D}}I\geq\inf_{\partial\mathcal{B}_\rho}I\geq\alpha>0$$ by Lemma \ref{lem3.0}. As a consequence, $c_{k}\geq \alpha>0.$ Moreover, by $(\rm{1^{\circ}})$, $c_{k+1}\geq c_k$ for any $k\geq2$.
	
	{\bf Step 2.} We show that for all $k\geq2$, there exists a sign-changing critical point $u_{k}$ such that $I(u_{k})=c_{k}$ i.e.,
	\begin{equation}\label{eq32.5}
	K_{c_{k}}\cap Q\neq\emptyset.
	\end{equation}
	
	To prove \eqref{eq32.5}, arguing by contradiction, we suppose $K_{c_{k}}\cap Q=\emptyset.$
	By Lemma \ref{lem3.5}, there exist $\varepsilon>0$ and a map $\eta\in C([0,1]\times E,E)$ such that $\eta(1,\cdot)$ is odd, $\eta(1,u)=u$ for $u\in I^{c_{k}-2\varepsilon}$ and
	\begin{equation}\label{eq32.6}
	\eta(1,I^{c_{k}+\varepsilon}\cup W)\subset I^{c_{k}-\varepsilon}\cup W.
	\end{equation}
	It follows from the definition of $c_{k}$ that there exists $A\in \Gamma_k$ such that $$\sup_{A\cap Q}I\leq c_{k}+\varepsilon.$$ Set $B=\eta(1,A)$. Then, by \eqref{eq32.6}, we have $$\sup_{B\cap Q}I\leq c_{k}-\varepsilon.$$ Noting that $B\in \Gamma_k$ by Lemma \ref{lem3} and $(\rm{2^{\circ}})$ above, one concludes that $c_{k}\leq c_{k}-\varepsilon,$ a contradiction.
	
	{\bf Step 3.} Finally, we shall prove that $c_{k}\to\infty$, as $k\to\infty.$ This implies that $I(u)$ has infinitely many sign-changing critical points.
	
	Arguing indirectly, we assume $c_{k}\to c<\infty,$ as $k\to\infty.$ Owing to the $\rm(PS)$ condition, it follows that $K_{c}$ is nonempty and compact. Moreover, we have $$K^2_{c}:=K_{c}\cap Q\neq\emptyset.$$ In fact, suppose $\{u_{k}\}$ is a sequence of sign-changing solutions to \eqref{eq1} with $I(u_{k})=c_{k}$. Then, $\langle I'(u_k),u_k^\pm\rangle=0$ and therefore $$\|u_{k}^{\pm}\|_{E}^2+b\int_{\mathbb{R}^3}|\nabla u_k|^2\int_{\mathbb{R}^3}|\nabla u^\pm_k|^2=\int_{\mathbb{R}^3}f(u_k^{\pm})u_k^\pm.$$
	Thus by \eqref{eq3.0} and Sobolev embedding theorem, we have $\|u_{k}^{\pm}\|_{E}\geq \delta_0>0,$ where $\delta_0$ is a constant independent of $k$. Noting that $I$ satisfies the $\rm(PS)$ condition, passing to a subsequence if necessary, there exists $u\in K_{c}$ such that $u_{k}\to u$. Then, the above inequality implies that $u$ is still sign-changing and hence $K^2_{c}\neq\emptyset.$
	
	Suppose $\gamma(K^2_{c})=\tau$. Since $0\notin K^2_{c}$ and $K^2_{c}$ is compact, by the ``continuous" property of the genus, there exists a open neighborhood $N$ in $E$ with $K^2_{c}\subset N$ such that $\gamma(N)=\tau.$
	From Lemma \ref{lem3.5}, there exist $\varepsilon>0$ and a map $\eta\in C([0,1]\times E,E)$ such that $\eta(1,\cdot)$ is odd, $\eta(1,u)=u$ for $u\in I^{c-2\varepsilon}$ and
	\begin{equation}\label{eq32.7}
	\eta(1,I^{c+\varepsilon}\cup W\setminus N)\subset I^{c-\varepsilon}\cup W.
	\end{equation}
	Since $c_{k}\to c$ as $k\to\infty$, we can choose $k$ sufficiently large, such that
	\begin{equation}\label{eq32.8}
	c_{k}\geq c-\frac{1}{2}\varepsilon.
	\end{equation}
	Note that $c_{k+\tau}\geq c_{k}$. By the definition of $c_{k+\tau}$, there exists $A\in\Gamma_{k+\tau}$, i.e., $$A=h(\mathcal{B}_R\cap E_m\setminus Y),$$ where $h\in G_m$, $m\geq k+\tau$, $\gamma(Y)\leq m-(k+\tau),$ such that
	$$I(u)\leq c_{k+\tau}+\frac{1}{4}\varepsilon<c+\varepsilon,\quad \textrm{for\ any \ } u\in A\cap Q.$$
	Therefore $A\subset I^{c+\varepsilon}\cup W.$ Then, from \eqref{eq32.7}, we have
	\begin{equation}\label{eq32.9}
	\eta(1,A\setminus N)\subset I^{c-\varepsilon}\cup W.
	\end{equation}
	Set $Y_1=Y\cup h^{-1}(N).$ Clearly, $Y_1$ is symmetric and open, and $$\gamma(Y_1)\leq \gamma(Y)+\gamma(h^{-1}(N))\leq m-(k+\tau)+\tau=m-k.$$ Then, by $(\rm{2^{\circ}})$ and $(\rm{3^{\circ}})$ above, one concludes that $$\widetilde{A}:=\eta(1,h(\mathcal{B}_R\cap E_m\setminus Y_1))\in\Gamma_k.$$ As a result, by \eqref{eq32.9}
	$$c_{k}\leq \sup_{\widetilde{A}\cap Q}I\leq\sup_{\eta(1,A\setminus N)\cap Q}I\leq c-\varepsilon.$$
	This is a contradiction to \eqref{eq32.8} and hence the proof is completed.
\end{proof}

\section{Proof of Theorem 1.2}\label{sec4}

Fix a number $r\in(\max\{4,p\},6)$. As in \cite{Liu2015}, we introduce a family of functional defined by
\begin{equation}\label{eq4.000}
I_\lambda(u)=\frac{a}{2}\int_{\mathbb{R}^3}|\nabla u|^2+\frac{b}{4}\left(\int_{\mathbb{R}^3}|\nabla u|^2\right)^2+\frac{1}{2}\int_{\mathbb{R}^3}V(x)|u|^2-\int_{\mathbb{R}^3}F(u)-\frac{\lambda}{r}\int_{\mathbb{R}^3}|u|^r
\end{equation}
for $\lambda\in(0,1]$. It is standard to show that $I_\lambda\in C^1(E,\mathbb{R})$.

In order to study our problem, we give some preliminary results. Firstly, as Lemma 2.1 in \cite{Li2014}, we have the following Pohozaev type identity.
\begin{lemma}\label{lem4.0}
	Assume $(V_0)$--$(V_1)$ and $(f_1)$--$(f_3)$ hold. Let $u$ be a critical point of $I_\lambda$ in $E$, then
	\begin{equation*}
	\begin{split}
	\frac{a}{2}\int_{\mathbb{R}^3}|\nabla u|^2+\frac{3}{2}\int_{\mathbb{R}^3}V(x)|u|^2&+\frac{1}{2}\int_{\mathbb{R}^3}(DV(x),x)|u|^2+\frac{b}{2}\left(\int_{\mathbb{R}^3}|\nabla u|^2\right)^2\\
	&-3\int_{\mathbb{R}^3}F(u)-\frac{3\lambda}{r}\int_{\mathbb{R}^3}|u|^r=0.
	\end{split}
	\end{equation*}
\end{lemma}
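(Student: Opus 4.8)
The plan is to derive the Pohozaev identity for $I_\lambda$ by testing the Euler--Lagrange equation against the scaling generator $x\cdot\nabla u$ and integrating over $\mathbb{R}^3$. Since $u$ is a critical point of $I_\lambda$, it solves
\[
-\Bigl(a+b\int_{\mathbb{R}^3}|\nabla u|^2\Bigr)\Delta u+V(x)u=f(u)+\lambda|u|^{r-2}u
\quad\text{in }\mathbb{R}^3,
\]
so $u$ is a weak solution to a Schr\"odinger-type equation whose leading coefficient $a+b\int_{\mathbb{R}^3}|\nabla u|^2$ is a fixed positive constant depending on $u$. The idea is to multiply this equation by $x\cdot\nabla u$ and integrate, which is precisely the argument used for Lemma 2.1 in \cite{Li2014}; the nonlocal coefficient poses no new difficulty here because, once $u$ is frozen, it behaves like an ordinary constant multiplying $-\Delta u$.

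First I would treat each term separately. For the principal part, the standard identity
\[
\int_{\mathbb{R}^3}(-\Delta u)(x\cdot\nabla u)=\frac{1}{2}\int_{\mathbb{R}^3}|\nabla u|^2
\]
(valid in dimension $3$, with the $N=3$ factor $\frac{N-2}{2}=\frac{1}{2}$) handles the Laplacian, so the coefficient $a+b\int|\nabla u|^2$ contributes $\frac{1}{2}(a+b\int|\nabla u|^2)\int|\nabla u|^2=\frac{a}{2}\int|\nabla u|^2+\frac{b}{2}(\int|\nabla u|^2)^2$, matching the first and fourth terms in the statement. For the potential term, integrating $V(x)u(x\cdot\nabla u)=\frac12 V(x)\,x\cdot\nabla(u^2)$ by parts produces $-\frac{3}{2}\int V|u|^2-\frac{1}{2}\int(DV(x),x)|u|^2$, which under the sign convention of the Pohozaev formula yields the $\frac32\int V|u|^2+\frac12\int(DV(x),x)|u|^2$ terms. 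Finally, the nonlinearity gives $\int f(u)(x\cdot\nabla u)=\int x\cdot\nabla F(u)=-3\int F(u)$, and the perturbation gives $\int\lambda|u|^{r-2}u(x\cdot\nabla u)=\frac{\lambda}{r}\int x\cdot\nabla(|u|^r)=-\frac{3\lambda}{r}\int|u|^r$. Collecting these produces exactly the claimed identity.

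The main obstacle is not the formal computation but its rigorous justification: the naive multiplier $x\cdot\nabla u$ need not lie in $E$, and the integrations by parts must be legitimated for a weak solution with only $H^1$-type regularity. The standard remedy, which I would follow, is to first upgrade the regularity of $u$ via elliptic estimates (using $(f_1)$ and the subcritical growth $r<6$ so that $f(u)+\lambda|u|^{r-2}u$ lies in a suitable $L^q$) to guarantee $u\in W^{2,q}_{loc}$, then to apply a cutoff-and-rescaling argument, testing against $\zeta_R(x)\,x\cdot\nabla u$ with $\zeta_R$ a smooth cutoff and letting $R\to\infty$, controlling the boundary and commutator terms by the decay afforded by membership in $E$ and the coercivity of $V$. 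The hypothesis $(V_2)$, namely $(DV(x),x)\in L^{r}(\mathbb{R}^3)$, is exactly what makes the term $\int(DV(x),x)|u|^2$ well-defined and finite, so I would invoke it at this stage to ensure all integrals converge.

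Since the statement explicitly points to the analogue in \cite{Li2014}, I would present the derivation as an adaptation of that argument, emphasizing only the modifications caused by the extra perturbation $\frac{\lambda}{r}\int|u|^r$ and by keeping the nonlocal coefficient frozen; the rest is verbatim the classical Pohozaev scheme. Thus the proof reduces to carrying out the four term-by-term computations above and citing the regularity and approximation machinery to justify the integrations by parts.
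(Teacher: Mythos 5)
Your strategy coincides with the paper's: the paper offers no independent argument, simply invoking Lemma 2.1 of \cite{Li2014}, and that lemma is itself proved by exactly the scheme you outline --- freeze the nonlocal coefficient $a+b\int_{\mathbb{R}^3}|\nabla u|^2$ as a constant, test the frozen equation against a cutoff multiplier $\zeta_R(x)\,x\cdot\nabla u$ after an elliptic regularity upgrade, and let $R\to\infty$, with $(V_2)$ guaranteeing that $\int_{\mathbb{R}^3}(DV(x),x)|u|^2$ is finite. Your treatment of the extra term $\lambda|u|^{r-2}u$ is also the right (and only) modification needed. Note in passing that the exponent in $(V_2)$ (an $r\in[\frac{3}{2},\infty]$) is a different letter-clash $r$ from the perturbation exponent $r\in(\max\{4,p\},6)$; the finiteness of the $(DV(x),x)$-term comes from H\"older against $u^2\in L^{r'}$ for the $(V_2)$-exponent, not from the perturbation exponent.

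There is, however, a concrete sign error in your term-by-term computation. The correct identity in $\mathbb{R}^N$ is
\[
\int_{\mathbb{R}^N}(-\Delta u)(x\cdot\nabla u)=-\frac{N-2}{2}\int_{\mathbb{R}^N}|\nabla u|^2,
\]
so in dimension $3$ the principal part contributes $-\frac{1}{2}\left(a+b\int_{\mathbb{R}^3}|\nabla u|^2\right)\int_{\mathbb{R}^3}|\nabla u|^2$, not the positive quantity you state. With the signs as you wrote them, ``collecting'' the four contributions yields
\[
\frac{a}{2}\int_{\mathbb{R}^3}|\nabla u|^2+\frac{b}{2}\left(\int_{\mathbb{R}^3}|\nabla u|^2\right)^2-\frac{3}{2}\int_{\mathbb{R}^3}V|u|^2-\frac{1}{2}\int_{\mathbb{R}^3}(DV(x),x)|u|^2+3\int_{\mathbb{R}^3}F(u)+\frac{3\lambda}{r}\int_{\mathbb{R}^3}|u|^r=0,
\]
which differs from the lemma in the signs of the last four terms; your appeal to ``the sign convention of the Pohozaev formula'' silently flips only the potential terms and is not a legitimate step. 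The repair is local: with the correct sign on the Laplacian term, the tested equation reads
\[
-\frac{a}{2}\int_{\mathbb{R}^3}|\nabla u|^2-\frac{b}{2}\left(\int_{\mathbb{R}^3}|\nabla u|^2\right)^2-\frac{3}{2}\int_{\mathbb{R}^3}V|u|^2-\frac{1}{2}\int_{\mathbb{R}^3}(DV(x),x)|u|^2=-3\int_{\mathbb{R}^3}F(u)-\frac{3\lambda}{r}\int_{\mathbb{R}^3}|u|^r,
\]
and multiplying through by $-1$ gives precisely the claimed identity. So the approach is sound and matches the paper's source, but as written your computation does not produce the stated conclusion until this sign is fixed consistently across all terms.
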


\begin{lemma}\label{lem4.1}
	For $\lambda\in(0,1]$, $I_\lambda$ satisfies the $\rm{(PS)}$ condition.
\end{lemma}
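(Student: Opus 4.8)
The plan is to establish the two ingredients of the (PS) condition for the fixed parameter $\lambda\in(0,1]$: that every (PS) sequence is bounded in $E$, and that a bounded (PS) sequence has a strongly convergent subsequence. So let $\{u_n\}\subset E$ satisfy $I_\lambda(u_n)\to c$ and $I_\lambda'(u_n)\to 0$ in $E^*$. The decisive feature I would exploit for boundedness is that the perturbation $\frac{\lambda}{r}\int|u|^r$ is \emph{super-quartic} ($r>4$), so that it can control the quartic nonlocal term even when $f$ is not $4$-superlinear. Testing with the combination $I_\lambda(u_n)-\frac1\mu\langle I_\lambda'(u_n),u_n\rangle$ and using $(f_3)$ to discard the nonnegative contribution $\int(\frac1\mu f(u_n)u_n-F(u_n))\ge 0$, I obtain
\[
c+o(\|u_n\|_E)\ \ge\ \Big(\tfrac12-\tfrac1\mu\Big)\|u_n\|_E^2+\Big(\tfrac14-\tfrac1\mu\Big)b\Big(\int_{\mathbb{R}^3}|\nabla u_n|^2\Big)^2+\lambda\Big(\tfrac1\mu-\tfrac1r\Big)\int_{\mathbb{R}^3}|u_n|^r .
\]
When $\mu\ge 4$ the nonlocal coefficient is nonnegative and boundedness is immediate as in Section \ref{sec3}. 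In the delicate range $\mu\le 4<r$ the nonlocal term has the wrong sign, and I would argue by contradiction: assuming $t_n:=\|u_n\|_E\to\infty$ and setting $w_n:=u_n/t_n$, division of the displayed inequality by $t_n^4$ leaves the quantity $\big(\int|\nabla w_n|^2\big)^2$ bounded and forces $t_n^{r-4}\int|w_n|^r$ to stay bounded, whence $\int|w_n|^r\to 0$. Thus the normalized sequence vanishes in $L^r$, and since $\|w_n\|_E=1$, the compact embedding of Lemma \ref{lem2} gives $w_n\to 0$ in every $L^s$, $s\in[2,6)$. Feeding this back into $\langle I_\lambda'(u_n),u_n\rangle=o(t_n)$ and using once more the gap $r>4$ yields the contradiction, exactly along the lines of \cite{Liu2015}.

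Granted boundedness, the compactness step is parallel to the continuity argument for $A$ in Lemma \ref{lem3.00}. Passing to a subsequence, $u_n\rightharpoonup u$ in $E$, and by Lemma \ref{lem2} $u_n\to u$ strongly in $L^s(\mathbb{R}^3)$ for $s\in[2,6)$ and a.e.\ in $\mathbb{R}^3$. Since $I_\lambda'(u_n)\to 0$ and $u_n-u\rightharpoonup 0$, I would evaluate
\[
\langle I_\lambda'(u_n)-I_\lambda'(u),\,u_n-u\rangle\to 0 .
\]
The quadratic part equals $\|u_n-u\|_E^2$, to which one adds the nonnegative term $b\big(\int|\nabla u_n|^2\big)\int|\nabla(u_n-u)|^2$; the remaining nonlocal discrepancy has the form $b\big(\int|\nabla u_n|^2-\int|\nabla u|^2\big)\int\nabla u\cdot\nabla(u_n-u)$ and tends to $0$ by weak convergence. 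The two nonlinear contributions $\int(f(u_n)-f(u))(u_n-u)$ and $\lambda\int(|u_n|^{r-2}u_n-|u|^{r-2}u)(u_n-u)$ both vanish: the former by the splitting $f=g_1+g_2$ and the dominated-convergence estimate already used in Lemma \ref{lem3.00}, the latter because $u\mapsto|u|^{r-2}u$ maps $L^r$ continuously into $L^{r/(r-1)}$ and $u_n\to u$ in $L^r$ (as $r<6$). Consequently $\|u_n-u\|_E^2\to 0$, which is the (PS) condition.

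The main obstacle is the boundedness of (PS) sequences. This is precisely the step that breaks down for the original functional $I$ when $\mu\le 4$, because the indefinite quartic nonlocal term is no longer dominated by $f$; it is to repair this that the super-quartic perturbation $\frac{\lambda}{r}\int|u|^r$ was introduced, and the whole point is that $r>4$ lets the rescaled sequence vanish in $L^r$ and thereby triggers the contradiction. The compactness half, by contrast, is routine once boundedness is secured, being a direct transcription of the strong-convergence argument underlying Lemma \ref{lem3.00}.
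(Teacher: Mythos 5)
Your proposal is correct, and its compactness half coincides with the paper's argument; but for the boundedness of (PS) sequences you take a genuinely different route. The paper (following Lemma 4.2 of \cite{Liu2015}) never uses the sign information in $(f_3)$ at this point: it tests with $I_\lambda(u_n)-\frac1\gamma\langle I_\lambda'(u_n),u_n\rangle$ for an auxiliary exponent $\gamma\in(4,r)$, so that the quadratic, nonlocal quartic and $\lambda$-terms all carry \emph{positive} coefficients for every $\mu>2$ (no case split), controls the now sign-less term $\int(\frac1\gamma f(u_n)u_n-F(u_n))$ by the growth conditions $(f_1)$--$(f_2)$ alone, arriving at $\|u_n\|_E^2+\lambda\|u_n\|_r^r\le C(1+\|u_n\|_p^p)$, and then uses the interpolation $\frac1p=\frac t2+\frac{1-t}r$ to force the two-sided comparability $\|u_n\|_r\sim\|u_n\|_2^{2/r}$, hence $\|u_n\|_E^2\le C(\lambda)\|u_n\|_2^2$, which contradicts $w=0$. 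You instead test with $\mu$, keep the (AR) sign, and attack the wrong-signed quartic term by rescaling; this does work, but your final sentence (``feeding this back \dots yields the contradiction'') compresses exactly the step that needs care, and it is not ``along the lines of \cite{Liu2015}''---their mechanism is the $\gamma$-trick just described. To close your argument, substitute into your displayed inequality the exact identity (valid since $\langle I_\lambda'(u_n),u_n\rangle=o(t_n)$)
\begin{equation*}
b\Big(\int_{\mathbb{R}^3}|\nabla u_n|^2\Big)^2=\int_{\mathbb{R}^3}f(u_n)u_n+\lambda\|u_n\|_r^r-t_n^2+o(t_n):
\end{equation*}
the $t_n^2$-terms combine to $\frac14\,t_n^2$ and the two $\|u_n\|_r^r$-contributions combine to the net positive coefficient $\lambda\big(\tfrac14-\tfrac1r\big)$---this is precisely where $r>4$ enters---so that
\begin{equation*}
c+o(t_n)\ \ge\ \tfrac14\,t_n^2+\Big(\tfrac14-\tfrac1\mu\Big)\int_{\mathbb{R}^3}f(u_n)u_n+\lambda\Big(\tfrac14-\tfrac1r\Big)\|u_n\|_r^r .
\end{equation*}
Since $\int f(u_n)u_n\le C_\varepsilon\|u_n\|_2^2+\varepsilon\|u_n\|_r^r$ by \eqref{eq3.0}, interpolation and Young's inequality (using $2<p<r$), choosing $\varepsilon$ small relative to $\lambda(\frac14-\frac1r)$ and invoking $\|u_n\|_2^2=t_n^2\|w_n\|_2^2=o(t_n^2)$ (your $w_n\to0$ in $L^2$, which requires first passing to a weak limit $w$ of $w_n$ and identifying $w=0$ from $\|w_n\|_r\to0$) leaves $c+o(t_n)\ge\frac14\,t_n^2(1+o(1))$, the desired contradiction. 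Comparing what each approach buys: yours exploits $(f_3)$ and makes transparent why the super-quartic perturbation repairs the regime $\mu\le4$, at the price of the case distinction and the feedback step above; the paper's is uniform in $\mu$, needs only the growth conditions in this lemma, and delivers the quantitative relation $\|u_n\|_E^2\le C(\lambda)\|u_n\|_2^2$ directly.
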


\begin{proof}
	Here we adopt a technique in the proof of Lemma 4.2 in \cite{Liu2015}. Assume that there exist $\{u_n\}\subset E$ and $c\in\mathbb{R}$ such that $I_\lambda(u_n)\to c$ and $I'_\lambda(u_n)\to0$ as $n\to\infty$. Choose a number $\gamma\in(4,r)$. For $u\in E$, we have
	\begin{equation*}
	\begin{split}
	I_\lambda(u_n)-\frac{1}{\gamma}\langle I'_\lambda(u_n),u_n\rangle&=\left(\frac{1}{2}-\frac{1}{\gamma}\right)\|u_n\|_E^2+\left(\frac{1}{4}-\frac{1}{\gamma}\right)b\left(\int_{\mathbb{R}^3}|\nabla u_n|^2\right)^2\\
	&\quad+\int_{\mathbb{R}^3}\left(\frac{1}{\gamma}f(u_n)u_n-F(u_n)\right)+\left(\frac{1}{\gamma}-\frac{1}{r}\right)\lambda\|u_n\|_r^r.
	\end{split}
	\end{equation*}
	Then, by $(f_1)$ and $(f_2)$, one sees that
	$$\|u_n\|_E^2+\lambda\|u_n\|_r^r\leq C_1(|I_\lambda(u_n)|+\|u_n\|_E\|I'_\lambda(u_n)\|+\|u_n\|_p^p).$$
	Thus, by the Young inequality, for large $n$,
	\begin{equation}\label{eq4.01}
	\|u_n\|_E^2+\lambda\|u_n\|_r^r\leq C_2(1+\|u_n\|_p^p).
	\end{equation}
	Now we show that $\{u_n\}$ is bounded in $E$. Arguing indirectly, we assume that $\|u_n\|_E\to\infty$ as $n\to\infty$. Let $w_n=u_n/\|u_n\|_E$. Then $\|w_n\|=1$ and thus up to a subsequence if necessary, there exists $w\in E$ such that $w_n\rightharpoonup w$ in $E$ and $w_n\to w$ in $L^s(\mathbb{R}^3)$ for any $s\in[2,6)$. Since $2<p<r$, by \eqref{eq4.01}, it is easy to see that $$\|w\|_r^r\leq0,$$
	which implies $w=0$. On the other hand, from \eqref{eq4.01}, there exists $C(\lambda)>0$ such that for large $n$,
	$$\|u_n\|_2^2+\|u_n\|_r^r\leq C(\lambda)\|u_n\|_p^p\leq C(\lambda)\|u_n\|_2^{tp}\|u_n\|_r^{(1-t)p},$$
	here we have used the interpolation inequality and $t\in (0,1)$ satisfying $\frac{1}{p}=\frac{t}{2}+\frac{1-t}{r}$. As a consequence, there exist $C_1(\lambda),C_2(\lambda)>0$ such that, for large $n$,
	$$C_1(\lambda)\|u_n\|_2^{\frac{2}{r}}\leq\|u_n\|_r\leq C_2(\lambda)\|u_n\|_2^{\frac{2}{r}}.$$
	Therefore $\|u_n\|_p^p\leq C_3(\lambda)\|u_n\|_2^2$ and hence, by \eqref{eq4.01}, for large $n$, we have
	\begin{equation*}
	\|u_n\|_E^2\leq C_4(\lambda)\|u_n\|_2^2,
	\end{equation*}
	which implies that $\|w_n\|_2^2\geq(C_4(\lambda))^{-1}$. Noting that $w_n\to w$ in $L^2(\mathbb{R}^3)$, we see that $\|w\|_2^2\geq(C_4(\lambda))^{-1}$. This contradicts to $w=0$ and therefore $\{u_n\}$ is bounded in $E$.
	
	Then, up to a subsequence, we can assume that $u_n\rightharpoonup u$ in $E$ as $n\to\infty$. By Lemma \ref{lem2}, we see that
	\begin{equation}\label{eq4.5}
	u_n\rightarrow u \,\,\,\, \textrm{\ in\ } L^s(\mathbb{R}^3),\quad \textrm{for\ any\ } s\in[2,6).
	\end{equation}
	Note that
	\begin{equation*}
	\begin{split}
	\langle I'_\lambda(u_n)-I'_\lambda(u),u_n-u\rangle&=\|u_n-u\|_E^2+b\int_{\mathbb{R}^3}|\nabla u_n|^2\int_{\mathbb{R}^3}\nabla u_n\cdot\nabla(u_n-u)\\
	&\quad+b\int_{\mathbb{R}^3}|\nabla u|^2\int_{\mathbb{R}^3}\nabla u\nabla(u_n-u)\\
	&\quad-\int_{\mathbb{R}^3}(f(u_n)-f(u))(u_n-u)\\
	&\quad-\lambda\int_{\mathbb{R}^3}\left(|u_n|^{r-2}u_n-|u|^{r-2}u\right)(u_n-u)\\
	&\geq\|u_n-u\|_E^2\\
	&\quad+b\int_{\mathbb{R}^3}\left(|\nabla u_n|^2-|\nabla u|^2\right)\int_{\mathbb{R}^3}\nabla u\cdot\nabla(u_n-u)\\
	&\quad-\int_{\mathbb{R}^3}(f(u_n)-f(u))(u_n-u)\\
	&\quad-\lambda\int_{\mathbb{R}^3}\left(|u_n|^{r-2}u_n-|u|^{r-2}u\right)(u_n-u).
	\end{split}
	\end{equation*}
	By $(f_1)$-$(f_2)$ and \eqref{eq4.5}, it is standard to show that $$\int_{\mathbb{R}^3}(f(u_n-f(u))(u_n-u)+\lambda\int_{\mathbb{R}^3}\left(|u_n|^{r-2}u_n-|u|^{r-2}u\right)(u_n-u)=o(1),$$ as $n \to \infty$.
	Moreover, by the boundedness of $\{u_n\}$ in $E$, \eqref{eq4.5} and the fact that $u_n\rightharpoonup u$ in $E$, one has $$b\int_{\mathbb{R}^3}\left(|\nabla u_n|^2-|\nabla u|^2\right)\int_{\mathbb{R}^3}\nabla u\cdot\nabla(u_n-u)=o(1),\quad\textrm{as\ }n\to\infty.$$
	Thus, from $I'_\lambda(u_n)\to0$, we conclude that $\|u_n-u\|_E^2\to0$ as $n\to\infty$ and hence the proof is completed.
\end{proof}

Note that for any $\lambda\in(0,1]$,
$$I_\lambda(u)=I(u)-\frac{\lambda}{r}\int_{\mathbb{R}^3}|u|^r\leq I(u).$$
Thus, one can follow the same line of the proof of Lemma \ref{lem3} to obtain the following result.
\begin{lemma}\label{lem4.2}
	Suppose $(f_1)$-$(f_3)$ hold and $m\geq1$. Then there exists $R_m>0$ independent of $\lambda$, such that
	$$\sup_{\mathcal{B}^{c}_{R_m}\cap E_m}I_\lambda<0.$$
\end{lemma}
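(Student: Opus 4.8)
The plan is to sidestep the perturbation altogether and inherit the estimate from the unperturbed functional, using that the extra term only pushes the energy down. Since $\lambda\in(0,1]$ and $r>0$, the subtracted term is nonnegative, so the first thing I would record is the pointwise, $\lambda$-uniform domination
\[
I_\lambda(u)=I(u)-\frac{\lambda}{r}\int_{\mathbb R^3}|u|^r\le I(u)\qquad\text{for every }u\in E,\ \lambda\in(0,1].
\]
The decisive feature is that one and the same $I$ lies above every member of the family $\{I_\lambda\}_{\lambda\in(0,1]}$, so any exterior sublevel bound for $I$ carries over to the whole family with identical constants.

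Next I would produce a radius for $I$ alone by invoking Lemma \ref{lem3}. Following its computation, $(f_1)$--$(f_3)$ furnish $c_1,c_2>0$ with $F(t)\ge c_1|t|^\mu-c_2|t|^2$, so that on the finite-dimensional subspace $E_m$ (where all norms are equivalent) one controls $I$ from above and obtains $R_m=R(E_m)>0$ with $\sup_{\mathcal B^{c}_{R_m}\cap E_m}I<0$. The point I would stress is that this radius is read off from $I$, hence it depends only on the subspace $E_m$ and carries no dependence whatsoever on $\lambda$.

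Combining the two observations closes the argument in one line: for every $\lambda\in(0,1]$,
\[
\sup_{\mathcal B^{c}_{R_m}\cap E_m}I_\lambda\ \le\ \sup_{\mathcal B^{c}_{R_m}\cap E_m}I\ <\ 0,
\]
so a single $R_m$ serves all $\lambda$ at once, which is exactly the assertion.

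I expect the only genuine subtlety to be the $\lambda$-independence of $R_m$, and it is worth spelling out why one must route through $I$ rather than through the individual $I_\lambda$. If instead one tried to extract the negativity from the perturbation $-\frac{\lambda}{r}\int|u|^r$, which on $E_m$ behaves like $-\frac{\lambda}{r}\,c\,\|u\|_E^r$ with $r>4$, it would overcome the positive quartic contribution $\frac{b}{4a^2}\|u\|_E^4$ only once $\|u\|_E$ exceeds a threshold of order $\lambda^{-1/(r-4)}$, which degenerates as $\lambda\to0^+$ and would yield only a $\lambda$-dependent radius. Transferring the bound from $I$ via the monotonicity $I_\lambda\le I$ avoids this degeneration and delivers the $\lambda$-uniform $R_m$ directly; this is precisely the reason the estimate is phrased through $I$.
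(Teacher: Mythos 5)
Your proposal is correct and coincides with the paper's own argument: the paper likewise observes the pointwise domination $I_\lambda(u)=I(u)-\frac{\lambda}{r}\int_{\mathbb{R}^3}|u|^r\leq I(u)$ for all $\lambda\in(0,1]$ and then inherits the radius $R_m$ from the proof of Lemma \ref{lem3}, exactly as you do. Your closing remark on why extracting negativity from the perturbation term would yield only a $\lambda$-dependent radius is a sensible sanity check but not needed for the proof itself.
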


\begin{lemma}\label{lem4.3}
	Assume $(f_1)$ and $(f_2)$ hold. Then for any $\lambda\in(0,1]$, there exist $\rho_\lambda>0$ and $\alpha_\lambda>0$ such that
	$$\inf_{\partial \mathcal{B}_{\rho_\lambda}}I_\lambda\geq \alpha_\lambda.$$
\end{lemma}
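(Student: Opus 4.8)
The plan is to mimic the proof of Lemma \ref{lem3.0} verbatim, treating the extra term $-\frac{\lambda}{r}\int_{\mathbb{R}^3}|u|^r$ as a higher-order perturbation that does not affect the local geometry of the functional near the origin. First I would invoke $(f_1)$ and $(f_2)$ to obtain, for any $\delta>0$, a constant $C_\delta>0$ with $|f(t)|\leq\delta|t|+C_\delta|t|^{p-1}$, and hence $F(t)\leq\frac{\delta}{2}|t|^2+\frac{C_\delta}{p}|t|^p$ by integration. Discarding the nonnegative terms $\frac{b}{4}(\int_{\mathbb{R}^3}|\nabla u|^2)^2$ from $I_\lambda$ then yields
$$I_\lambda(u)\geq\frac{1}{2}\|u\|_E^2-\frac{\delta}{2}\int_{\mathbb{R}^3}|u|^2-\frac{C_\delta}{p}\int_{\mathbb{R}^3}|u|^p-\frac{\lambda}{r}\int_{\mathbb{R}^3}|u|^r.$$

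Next I would apply the Sobolev embedding $\|u\|_s\leq C(s)\|u\|_E$, valid for every $s\in[2,6]$ and in particular for $s=r$, since $r\in(\max\{4,p\},6)\subset[2,6]$. Choosing $\delta$ small enough that the quadratic term $\frac{\delta}{2}\int_{\mathbb{R}^3}|u|^2$ is absorbed into $\frac{1}{4}\|u\|_E^2$ leaves a bound of the form
$$I_\lambda(u)\geq\frac{1}{4}\|u\|_E^2-c_1\|u\|_E^p-c_2\lambda\|u\|_E^r,$$
where $c_1=\frac{C_\delta C(p)^p}{p}$ and $c_2=\frac{C(r)^r}{r}$ are positive constants independent of $\lambda$. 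Since $p>2$ and $r>2$, both subtracted terms are of strictly higher order than $\|u\|_E^2$ as $\|u\|_E\to0$, so the scalar function $g(t)=\frac{1}{4}t^2-c_1t^p-c_2\lambda t^r$ is strictly positive on some interval $(0,\rho_\lambda]$. Taking $\alpha_\lambda:=g(\rho_\lambda)>0$ then gives $\inf_{\partial\mathcal{B}_{\rho_\lambda}}I_\lambda\geq\alpha_\lambda$, as required.

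There is essentially no genuine obstacle here; the argument is routine once one notes that $r<6$ keeps the new term inside the range of the compact Sobolev embedding. The only point worth flagging is that, because the coefficient $\lambda$ multiplies the new nonlinearity, the radius $\rho_\lambda$ and the threshold $\alpha_\lambda$ now genuinely depend on $\lambda$, in contrast with the uniform $\rho,\alpha$ of Lemma \ref{lem3.0}. Since the statement only asserts their existence for each fixed $\lambda\in(0,1]$, this dependence causes no difficulty; indeed, because $\lambda\leq1$, one could even extract $\lambda$-independent constants if a uniform version were later needed.
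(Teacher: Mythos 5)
Your proposal is correct and follows essentially the same route as the paper's own proof: the same growth estimate $|f(t)|\leq\delta|t|+C_\delta|t|^{p-1}$, the same discarding of the nonnegative Kirchhoff term, the same absorption of the quadratic piece via a small $\delta$, and the same conclusion from $r>p>2$, with the paper likewise obtaining $I_\lambda(u)\geq\frac{1}{4}\|u\|^2_{E}-\frac{C_\delta C(p)}{p}\|u\|_E^p-\frac{\lambda C(r)}{r}\|u\|_E^r$. Your closing observation about the $\lambda$-dependence of $\rho_\lambda,\alpha_\lambda$ (and that $\lambda\leq1$ would permit uniform constants) is accurate but not needed; note only that this lemma uses the \emph{continuous} embedding $E\hookrightarrow L^s(\mathbb{R}^3)$, $s\in[2,6]$, so compactness plays no role here.
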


\begin{proof}
	By \eqref{eq3.0}, for $u\in E$, we have
	\begin{equation}\label{eq4.00}
	\begin{split}
	I_\lambda(u)&=\frac{1}{2}\|u\|^2_{E}+\frac{b}{4}\left(\int_{\mathbb{R}^3}|\nabla u|^2\right)^2-\int_{\mathbb{R}^3}F(u)-\frac{\lambda}{r}\int_{\mathbb{R}^3}|u|^r\\
	&\geq\frac{1}{2}\|u\|^2_{E}-\frac{\delta}{2}\int_{\mathbb{R}^3}|u|^2-\frac{C_\delta}{p}\int_{\mathbb{R}^3}|u|^p-\frac{\lambda}{r}\int_{\mathbb{R}^3}|u|^r.
	\end{split}
	\end{equation}
	Notice that for any $s\in[2,6]$, there exists $C(s)>0$ such that $\|u\|_s\leq C(s)\|u\|_E$. Choose $\delta$ satisfying that $C(2)\delta<\frac{1}{2}$. Then, from \eqref{eq4.00} we have
	\begin{equation*}
	I_\lambda(u)\geq\frac{1}{4}\|u\|^2_{E}-\frac{C_\delta C(p)}{p}\|u\|_E^p-\frac{\lambda C(r)}{r}\|u\|_E^r.
	\end{equation*}
	Noting that $r>p>2$, one obtains that there exist $\rho_\lambda,\alpha_\lambda>0$ such that
	$\inf_{\partial \mathcal{B}_{\rho_\lambda}}I_\lambda\geq \alpha_\lambda,$ as required.
\end{proof}

Let $\lambda\in(0,1]$, for any $u\in E$, we consider the following equation
$$-\left(a+b\int_{\mathbb{R}^3}|\nabla u|^2\right)\Delta v+V(x)v=f(u)+\lambda|u|^{r-2}u, \quad v\in E.$$
Similar to Lemma \ref{lem3.00}, one can prove that the above equation has a unique solution, denoted by $v=A_{\lambda}(u)\in E$ and the operator $A_\lambda: E\to E$ is continuous. As in Section \ref{sec3}, we shall study some properties of the operator $A_\lambda$.
\begin{lemma}\label{lem4.4}
	\
	
	\begin{itemize}
		\item [$\rm{(i)}$] $\langle I_\lambda'(u),u-A_\lambda(u)\rangle\geq\|u-A_\lambda(u)\|_E^2$ for all $u\in E$.
		\item [$\rm{(ii)}$] There exists $C>0$ independent of $\lambda$ such that $\|I_\lambda'(u)\|\leq\|u-A_\lambda(u)\|_E(1+C\|u\|_E^2)$ for all $u\in E$.
		\item [$\rm{(iii)}$] For $M>0$ and $\alpha>0$, there exists $\beta_\lambda>0$ such that $\|u-A_\lambda(u)\|_E\geq \beta_\lambda$ for any $u\in E$ with $|I_\lambda(u)|\leq M$ and $\|I_\lambda'(u)\|\geq \alpha$.
		\item [$\rm{(iv)}$] If $f$ is odd, then so is $A_\lambda$.
	\end{itemize}
\end{lemma}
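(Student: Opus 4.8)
The plan is to follow the proof of Lemma~\ref{lem3.1} essentially verbatim, the only new ingredient being the term $\lambda|u|^{r-2}u$ in the definition of $A_\lambda$, which is exactly compensated by the term $-\frac{\lambda}{r}\int_{\mathbb{R}^3}|u|^r$ in $I_\lambda$. Writing $v=A_\lambda(u)$ and testing the defining equation against an arbitrary $\varphi\in E$, one has
\begin{equation*}
\left(a+b\int_{\mathbb{R}^3}|\nabla u|^2\right)\int_{\mathbb{R}^3}\nabla v\cdot\nabla\varphi+\int_{\mathbb{R}^3}V(x)v\varphi=\int_{\mathbb{R}^3}f(u)\varphi+\lambda\int_{\mathbb{R}^3}|u|^{r-2}u\varphi.
\end{equation*}
Subtracting this from the formula for $\langle I_\lambda'(u),\varphi\rangle$, the $f(u)$ and the $\lambda|u|^{r-2}u$ contributions cancel and I obtain, just as in Lemma~\ref{lem3.1},
\begin{equation*}
\langle I_\lambda'(u),\varphi\rangle=(u-A_\lambda(u),\varphi)_E+b\int_{\mathbb{R}^3}|\nabla u|^2\int_{\mathbb{R}^3}\nabla(u-A_\lambda(u))\cdot\nabla\varphi.
\end{equation*}
Choosing $\varphi=u-A_\lambda(u)$ and discarding the nonnegative $b$-term gives (i); choosing general $\varphi$ and estimating the $b$-term by H\"older together with $\int_{\mathbb{R}^3}|\nabla u|^2\leq a^{-1}\|u\|_E^2$ gives (ii) with $C=b/a^2$, a constant depending only on $a,b$ and hence independent of $\lambda$, as required. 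Property (iv) is immediate: since $f$ and $t\mapsto|t|^{r-2}t$ are odd while the coefficient $a+b\int_{\mathbb{R}^3}|\nabla u|^2$ is even in $u$, uniqueness of the solution of the defining equation forces $A_\lambda(-u)=-A_\lambda(u)$.

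For (iii) I would argue by contradiction as in Lemma~\ref{lem3.1}(iii): assume there is $\{u_n\}\subset E$ with $|I_\lambda(u_n)|\leq M$, $\|I_\lambda'(u_n)\|\geq\alpha$ and $\|u_n-A_\lambda(u_n)\|_E\to0$. Once $\{u_n\}$ is shown to be bounded in $E$, property (ii) forces $\|I_\lambda'(u_n)\|\to0$, contradicting $\|I_\lambda'(u_n)\|\geq\alpha$; so everything reduces to proving this boundedness. Here $\mu>4$ is no longer available, so instead of $\mu$ I would fix an exponent $\gamma\in(4,r)$ and form, using the identity above with $\varphi=u_n$,
\begin{equation*}
\begin{split}
I_\lambda(u_n)-\tfrac{1}{\gamma}(u_n-A_\lambda(u_n),u_n)_E&=\left(\tfrac{1}{2}-\tfrac{1}{\gamma}\right)\|u_n\|_E^2+\left(\tfrac{1}{4}-\tfrac{1}{\gamma}\right)b\left(\int_{\mathbb{R}^3}|\nabla u_n|^2\right)^2\\
&\quad+\int_{\mathbb{R}^3}\left(\tfrac{1}{\gamma}f(u_n)u_n-F(u_n)\right)+\left(\tfrac{1}{\gamma}-\tfrac{1}{r}\right)\lambda\int_{\mathbb{R}^3}|u_n|^r\\
&\quad+\tfrac{b}{\gamma}\int_{\mathbb{R}^3}|\nabla u_n|^2\int_{\mathbb{R}^3}\nabla(u_n-A_\lambda(u_n))\cdot\nabla u_n.
\end{split}
\end{equation*}
Since $4<\gamma<r$, the coefficients $\tfrac12-\tfrac1\gamma$, $\tfrac14-\tfrac1\gamma$ and $\tfrac1\gamma-\tfrac1r$ are all positive, while the left-hand side is bounded above by $M+\tfrac1\gamma\|u_n-A_\lambda(u_n)\|_E\|u_n\|_E$.

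The main obstacle is twofold, and it is exactly where Section~\ref{sec4} differs from Section~\ref{sec3}. First, because $\mu>4$ is not assumed, the term $\int_{\mathbb{R}^3}(\tfrac1\gamma f(u_n)u_n-F(u_n))$ need no longer be nonnegative; by $(f_3)$ it is bounded below by $(\tfrac{\mu}{\gamma}-1)\int_{\mathbb{R}^3}F(u_n)$, and by \eqref{eq3.0} one has $\int_{\mathbb{R}^3}F(u_n)\leq C(\|u_n\|_2^2+\|u_n\|_p^p)$. The quadratic part is absorbed into $\|u_n\|_E^2$ (taking the $\delta$ in \eqref{eq3.0} small), while the $\|u_n\|_p^p$ part is controlled, precisely because $p<r$, by the interpolation $\|u_n\|_p\leq\|u_n\|_2^{t}\|u_n\|_r^{1-t}$ with $\tfrac1p=\tfrac t2+\tfrac{1-t}{r}$ followed by Young's inequality, so that it is dominated by $\varepsilon\lambda\|u_n\|_r^r+C_\lambda\|u_n\|_2^2$ and reabsorbed into the positive $\lambda\|u_n\|_r^r$ and $\|u_n\|_E^2$ terms; this is where the threshold $\beta_\lambda$ picks up its dependence on $\lambda$ and where $r>4$ is essential. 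Second, the nonlocal cross term is cubic in $\|u_n\|_E$ but carries the vanishing factor $\|u_n-A_\lambda(u_n)\|_E$, and by the same H\"older–Young estimate as in Lemma~\ref{lem3.1}(iii) it is bounded by $\varepsilon(\int_{\mathbb{R}^3}|\nabla u_n|^2)^2+C(\varepsilon)\|u_n\|_E^2\|u_n-A_\lambda(u_n)\|_E^2$, the first summand being absorbed into the positive quartic term. Collecting everything yields, for $n$ large (so that $\|u_n-A_\lambda(u_n)\|_E$ is small), the analogue of \eqref{eq3.6},
\begin{equation*}
\|u_n\|_E^2\leq C_\lambda\left(M+\|u_n\|_E\|u_n-A_\lambda(u_n)\|_E+\|u_n\|_E^2\|u_n-A_\lambda(u_n)\|_E^2\right),
\end{equation*}
whence $\{u_n\}$ is bounded in $E$, and the contradiction via (ii) completes the proof.
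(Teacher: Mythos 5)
Your treatment of (i), (ii) and (iv) is correct and matches the paper, which simply defers these to the proof of Lemma \ref{lem3.1}; likewise your setup for (iii) --- fixing $\gamma\in(4,r)$, the identity for $I_\lambda(u)-\frac{1}{\gamma}(u-A_\lambda(u),u)_E$, the H\"older--Young treatment of the nonlocal cross term, and the reduction (via the growth bound on $\frac{1}{\gamma}f(u)u-F(u)$, whether through $(f_3)$ as you do or directly through $(f_1)$--$(f_2)$ as the paper does) to the inequality $\|u_n\|_E^2+\lambda\|u_n\|_r^r\leq C(1+\|u_n\|_p^p)$ for large $n$ --- is exactly the paper's route to \eqref{eq4.02}.

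The gap is in your final absorption step. With $\frac1p=\frac t2+\frac{1-t}{r}$ one has $\|u_n\|_p^p=(\|u_n\|_2^2)^{tp/2}(\|u_n\|_r^r)^{(1-t)p/r}$, and the exponents satisfy $\frac{tp}{2}+\frac{(1-t)p}{r}=1$ \emph{exactly}, so Young's inequality only gives $\|u_n\|_p^p\leq\eta\lambda\|u_n\|_r^r+C_{\eta,\lambda}\|u_n\|_2^2$ with a \emph{large} constant in front of $\|u_n\|_2^2$; there is no slack in the exponents that would let you make both coefficients small. Since $\|u_n\|_2^2$ is comparable to $\|u_n\|_E^2$ (only $\|u_n\|_2^2\leq V_0^{-1}\|u_n\|_E^2$, with a fixed constant), the term $C_{\eta,\lambda}\|u_n\|_2^2$ cannot be ``reabsorbed into the positive $\|u_n\|_E^2$ term,'' and your concluding inequality $\|u_n\|_E^2\leq C_\lambda\bigl(M+\|u_n\|_E\|u_n-A_\lambda(u_n)\|_E+\|u_n\|_E^2\|u_n-A_\lambda(u_n)\|_E^2\bigr)$ does not follow --- indeed it fails as a functional inequality on elements with $\|u\|_2^2\sim\|u\|_r^r\sim\|u\|_E^2$ large. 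Tellingly, your argument never invokes compactness, whereas the boundedness here genuinely needs it: the paper closes this step as in Lemma \ref{lem4.1}, assuming $\|u_n\|_E\to\infty$ and normalizing $w_n=u_n/\|u_n\|_E$. Dividing the inequality by $\|u_n\|_E^r$ and using $p<r$ gives $\lambda\|w_n\|_r^r\to0$, hence $w=0$; on the other hand the same inequality plus interpolation yields the two-sided bound $C_1(\lambda)\|u_n\|_2^{2/r}\leq\|u_n\|_r\leq C_2(\lambda)\|u_n\|_2^{2/r}$, whence $\|u_n\|_p^p\leq C_3(\lambda)\|u_n\|_2^2$ and $\|u_n\|_E^2\leq C_4(\lambda)\|u_n\|_2^2$, so $\|w_n\|_2^2\geq C_4(\lambda)^{-1}$; by the compact embedding $E\hookrightarrow L^2$ (Lemma \ref{lem2}), $w_n\to w$ in $L^2(\mathbb{R}^3)$, contradicting $w=0$. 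Replacing your absorption paragraph by this normalization argument repairs the proof; everything else you wrote stands.
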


\begin{proof}
	We only prove $\rm{(iii)}$, because the proofs of $\rm{(i)}$-$\rm{(ii)}$ and $\rm{(iv)}$ are similar to that of Lemma \ref{lem3.1}. Fix a number $\gamma\in(4,r)$. Notice that
	\begin{equation*}
	\langle I_\lambda'(u),u\rangle=(u-A_\lambda(u),u)_E+b\int_{\mathbb{R}^3}|\nabla u|^2\int_{\mathbb{R}^3}\nabla (u-A_\lambda(u))\cdot\nabla u,
	\end{equation*}
	which implies that
	\begin{equation*}
	\begin{split}
	I_\lambda(u)-\frac{1}{\gamma}(u-A_\lambda(u),u)_E
	&=\left(\frac{1}{2}-\frac{1}{\gamma}\right)\|u\|_E^2+\left(\frac{1}{4}-\frac{1}{\gamma}\right)b\left(\int_{\mathbb{R}^3}|\nabla u|^2\right)^2\\
	&\quad+\frac{b}{\gamma}\int_{\mathbb{R}^3}|\nabla u|^2\int_{\mathbb{R}^3}\nabla (u-A_\lambda(u))\cdot\nabla u\\
	&\quad+\int_{\mathbb{R}^3}\left(\frac{1}{\gamma}f(u)u-F(u)\right)+\left(\frac{1}{\gamma}-\frac{1}{r}\right)\lambda\|u\|_r^r.
	\end{split}
	\end{equation*}
	From $(f_1)$ and $(f_2)$, we have
	\begin{equation*}
	\begin{split}
	&\|u\|_E^2+\left(\int_{\mathbb{R}^3}|\nabla u|^2\right)^2+\lambda\|u\|_r^r\\&\quad\leq C\Bigg(|I_\lambda(u)|+\|u\|_E\|u-A_\lambda(u)\|_E
	+\int_{\mathbb{R}^3}|\nabla u|^2\left|\int_{\mathbb{R}^3}\nabla u\cdot\nabla (u-A_\lambda(u))\right|+\|u\|_p^p\Bigg).
	\end{split}
	\end{equation*}
	Then, by the H\"{o}lder inequality and Young inequality, one sees that
	\begin{equation}\label{eq4.02}
	\begin{split}
	\|u\|_E^2+\lambda\|u\|_r^r\leq C_1(|I(u)|+\|u\|_E\|u-A_\lambda(u)\|_E
	+\|u\|_E^2\|u-A_\lambda(u)\|_E^2+\|u\|_p^p).
	\end{split}
	\end{equation}
	Arguing indirectly, suppose that there exists $\{u_n\}\subset E$ with $|I_\lambda(u_n)|\leq M$ and $\|I_\lambda'(u_n)\|\geq \alpha$ such that $\|u_n-A_\lambda(u_n)\|_E\to0$ as $n\to\infty$. Then, it follows from \eqref{eq4.02} that, for large $n$,
	$$\|u\|_E^2+\lambda\|u\|_r^r\leq C_2(1+\|u\|_p^p).$$
	Similar to the proof of Lemma \ref{lem4.1}, one can show that $\{u_n\}$ is bounded in $E$. Hence, jointly with (ii), this implies $\|I'_\lambda(u_n)\|\to0$ as $n\to\infty$, a contradiction. This ends the proof.
\end{proof}

\begin{lemma}\label{lem4.5}
	There exists $\varepsilon_0>0$ independent of $\lambda$ such that for any $\varepsilon\in(0,\varepsilon_0]$, there holds
	$$A_\lambda(\overline{P}^\pm_\varepsilon)\subset P^\pm_\varepsilon.$$ Moreover, every nontrivial solutions $u\in P^+_\varepsilon$ and $u\in P^-_\varepsilon$ of \eqref{eq1} are positive and negative, respectively.
\end{lemma}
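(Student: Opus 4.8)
The plan is to mirror the proof of Lemma \ref{lem3.2} line by line, treating only the inclusion $A_\lambda(\overline{P}^+_\varepsilon)\subset P^+_\varepsilon$, since the other case follows from the oddness of $A_\lambda$ (Lemma \ref{lem4.4}(iv)) together with $P^-_\varepsilon=-P^+_\varepsilon$. Writing $u=u^++u^-$ with $u^+=\max\{u,0\}$ and $u^-=\min\{u,0\}$, and setting $v=A_\lambda(u)$, I would test the equation defining $A_\lambda(u)$ against $v^-$. Because $v^+$ and $v^-$ have disjoint supports, one has $(v,v^-)_E=\|v^-\|_E^2$ and $\int_{\mathbb{R}^3}\nabla v\cdot\nabla v^-=\int_{\mathbb{R}^3}|\nabla v^-|^2\geq0$, so the nonlocal term enters with a favorable sign and yields
\[
\|v^-\|_E^2\leq\int_{\mathbb{R}^3}f(u)v^-+\lambda\int_{\mathbb{R}^3}|u|^{r-2}u\,v^-.
\]

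The key pointwise step, exactly as in Lemma \ref{lem3.2}, is that on $\{u\geq0\}$ we have $u^-=0$ while $f(u)\geq0$ and $|u|^{r-2}u\geq0$ (the sign of $f$ on $(0,\infty)$ follows from $(f_3)$), and on $\{u<0\}$ we have $u^-=u$. Since $v^-\leq0$, this gives $f(u)v^-\leq f(u^-)v^-$ and $|u|^{r-2}u\,v^-\leq|u^-|^{r-2}u^-\,v^-=|u^-|^{r-1}|v^-|$. Estimating the first integral by \eqref{eq3.0} and both by H\"older's inequality, and then using the Sobolev/distance bounds $\|u^-\|_s\leq C(s)\,\textrm{dist}(u,P^+)$ and $\|v^-\|_s\leq C(s)\|v^-\|_E$ from the proof of Lemma \ref{lem3.2}, I would arrive at
\[
\|v^-\|_E^2\leq C\big(\delta\,\textrm{dist}(u,P^+)+C_\delta\,\textrm{dist}(u,P^+)^{p-1}+\lambda\,\textrm{dist}(u,P^+)^{r-1}\big)\|v^-\|_E.
\]

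Dividing by $\|v^-\|_E$ and recalling $\textrm{dist}(v,P^+)\leq\|v^-\|_E$, the decisive observation is that $\lambda\in(0,1]$ lets me replace $\lambda\,\textrm{dist}(u,P^+)^{r-1}$ by $\textrm{dist}(u,P^+)^{r-1}$ uniformly in $\lambda$. Since $p>2$ and $r>2$, after fixing $\delta$ small (say $\delta=\frac{1}{6C}$) I can choose $\varepsilon_0>0$, \emph{independent of} $\lambda$, so small that for $\textrm{dist}(u,P^+)\leq\varepsilon\leq\varepsilon_0$ the two superlinear terms are each dominated by $\frac{1}{6}\textrm{dist}(u,P^+)$; this produces $\textrm{dist}(v,P^+)\leq\frac{1}{2}\textrm{dist}(u,P^+)<\varepsilon$, whence $A_\lambda(\overline{P}^+_\varepsilon)\subset P^+_\varepsilon$. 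For the last assertion, a fixed point $u\in P^+_\varepsilon$ of $A_\lambda$ satisfies $\textrm{dist}(u,P^+)=0$, so $u\in P^+$, i.e. $u\geq0$, and the maximum principle forces $u>0$ when $u\not\equiv0$; the negative case is identical.

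The main obstacle, and the sole genuine difference from Lemma \ref{lem3.2}, is controlling the perturbation $\lambda\int_{\mathbb{R}^3}|u|^{r-2}u\,v^-$ so that $\varepsilon_0$ stays independent of $\lambda$; this is precisely where the uniform bound $\lambda\leq1$ and the exponent condition $r>2$ are used. All remaining estimates are a routine repetition of those in the proof of Lemma \ref{lem3.2}.
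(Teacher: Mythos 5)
Your proposal is correct and follows essentially the same route as the paper: both repeat the estimate from Lemma \ref{lem3.2} with the extra term $\lambda\int_{\mathbb{R}^3}|u|^{r-2}u\,v^-$ bounded by $|u^-|^{r-1}|v^-|$ using $\lambda\leq 1$, leading to $\textrm{dist}(v,P^+)\leq C\bigl(\delta\,\textrm{dist}(u,P^+)+C_\delta\,\textrm{dist}(u,P^+)^{p-1}+\textrm{dist}(u,P^+)^{r-1}\bigr)$ with $C$ independent of $\lambda$, after which $\delta$ and $\varepsilon_0$ are fixed uniformly in $\lambda$ exactly as you describe. Your identification of the $\lambda$-uniformity of the perturbation term as the only new point, and the concluding fixed-point plus maximum-principle argument, match the paper's proof (which chooses $\delta=\frac{1}{2C}$ and $CC_\delta\varepsilon_0^{p-2}+C\varepsilon_0^{r-2}\leq\frac{1}{4}$ rather than your $\delta=\frac{1}{6C}$ split, an inessential difference).
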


\begin{proof}
	As in the proof of Lemma \ref{lem3.2}, we have
	\begin{equation*}
	\begin{split}
	\textrm{dist}(v,P^+)\|v^-\|_E
	&\leq\int_{\mathbb{R}^3}f(u)v^-+\lambda\int_{\mathbb{R}^3}|u|^{r-2}uv^--b\int_{\mathbb{R}^3}|\nabla u|^2\int_{\mathbb{R}^3}\nabla v\cdot\nabla v^-\\
	&\leq\int_{\mathbb{R}^3}(\delta|u^-|+C_\delta|u^-|^{p-1}+|u^-|^{r-1})|v^-|\\
	&\leq C(\delta\textrm{dist}(u,P^+)+C_\delta\textrm{dist}(u,P^+)^{p-1}+\textrm{dist}(u,P^+)^{r-1})\|v^-\|_E,
	\end{split}
	\end{equation*}
	here $C$ is a constant independent of $\lambda$. Thus, choosing $\delta=\frac{1}{2C}$ and $\varepsilon_0>0$ satisfying with $CC_{\delta}\varepsilon_0^{p-2}+C\varepsilon_0^{r-2}\leq\frac{1}{4}$, for any $\varepsilon\in (0,\varepsilon_0]$, one concludes that
	$$\textrm{dist}(v,P^+)\leq\frac{1}{2}\textrm{dist}(u,P^+)<\varepsilon, \quad \textrm{for\ any\ } u\in \overline{P}_\varepsilon^+.$$
	This implies that $A_\lambda(\overline{P}^+_\varepsilon)\subset P^+_\varepsilon$. Moreover, we can show that any nontrivial solutions $u\in P^+_\varepsilon$ is positive. The other case can be proved similarly.
\end{proof}

Denote the set of critical points of $I_\lambda$ by $K_\lambda$. As in Lemma \ref{lem3.3}, we have the following results.
\begin{lemma}\label{lem4.6}
	There exists a locally Lipschitz continuous operator $B_\lambda: E\setminus K_\lambda\to E$ with the following properties:
	\begin{itemize}
		\item [$\rm{(1)}$] $B_\lambda(\overline{P}^\pm_\varepsilon)\subset P^\pm_\varepsilon$ for all $\varepsilon\in(0,\varepsilon_0]$;
		\item [$\rm{(2)}$] $\frac{1}{2}\|u-B_\lambda(u)\|_E\leq\|u-A_\lambda(u)\|_E\leq2\|u-B_\lambda(u)\|_E$ for all $u\in E\setminus K_\lambda$;
		\item [$\rm{(3)}$] $\langle I_\lambda'(u),u-B_\lambda(u)\rangle\geq\frac{1}{2}\|u-A_\lambda(u)\|_E^2$ for all $u\in E\setminus K_\lambda$;
		\item [$\rm{(4)}$] If $f$ is odd, then so is $B_\lambda$.
	\end{itemize}
\end{lemma}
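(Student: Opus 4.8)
The plan is to mirror the construction of $B$ in Lemma~\ref{lem3.3}, replacing $A$ by $A_\lambda$ and invoking Lemmas~\ref{lem4.4} and~\ref{lem4.5} in place of Lemmas~\ref{lem3.1} and~\ref{lem3.2}. The point is that although $A_\lambda$ is only continuous, on $E_0:=E\setminus K_\lambda$ the fixed points of $A_\lambda$ coincide with $K_\lambda$, so $u\neq A_\lambda(u)$ there and $u\mapsto\|u-A_\lambda(u)\|_E$ is a \emph{positive}, continuous function on $E_0$. I would therefore approximate $A_\lambda$ by a locally Lipschitz operator $B_\lambda$ through a locally Lipschitz partition of unity, choosing the mesh so fine that $A_\lambda$ varies negligibly on each patch relative to the two controlling quantities $\|u-A_\lambda(u)\|_E$ and $1+C\|u\|_E^2$ from Lemma~\ref{lem4.4}(ii). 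Concretely: for each $u\in E_0$ pick an open ball $N_u$ on which $\|A_\lambda(w)-A_\lambda(u)\|_E$ stays below a small fraction $\theta(u)\|u-A_\lambda(u)\|_E$ with $\theta(u)\sim(1+C\|u\|_E^2)^{-1}$, and on which the center may be taken inside $\overline{P}^\pm_\varepsilon$ whenever $N_u$ meets $\overline{P}^\pm_\varepsilon$. Since the metric space $E_0$ is paracompact, extract a locally finite refinement $\{N_j\}$ with centers $u_j$ and a subordinate locally Lipschitz partition of unity $\{\rho_j\}$, and set $B_\lambda(u):=\sum_j\rho_j(u)A_\lambda(u_j)$; this is locally a finite sum of locally Lipschitz functions, hence locally Lipschitz.

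For properties (2) and (3) I would use $\sum_j\rho_j\equiv 1$ to write $u-B_\lambda(u)=\sum_j\rho_j(u)\bigl(u-A_\lambda(u_j)\bigr)$. On each patch $A_\lambda(u_j)$ is within the prescribed small fraction of $A_\lambda(u)$, so summing yields the factor-two comparison $\frac12\|u-B_\lambda(u)\|_E\le\|u-A_\lambda(u)\|_E\le 2\|u-B_\lambda(u)\|_E$ of (2). For (3), expand $\langle I_\lambda'(u),u-B_\lambda(u)\rangle=\sum_j\rho_j(u)\langle I_\lambda'(u),u-A_\lambda(u_j)\rangle$ and split each term as $\langle I_\lambda'(u),u-A_\lambda(u)\rangle+\langle I_\lambda'(u),A_\lambda(u)-A_\lambda(u_j)\rangle$; the first is $\ge\|u-A_\lambda(u)\|_E^2$ by Lemma~\ref{lem4.4}(i), while the second is bounded by $\|I_\lambda'(u)\|\,\|A_\lambda(u)-A_\lambda(u_j)\|_E\le\|u-A_\lambda(u)\|_E(1+C\|u\|_E^2)\cdot\theta(u)\|u-A_\lambda(u)\|_E$ via Lemma~\ref{lem4.4}(ii), which by the choice of $\theta(u)$ is at most $\frac12\|u-A_\lambda(u)\|_E^2$. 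Because the constant $C$ in Lemma~\ref{lem4.4}(ii) is independent of $\lambda$, the resulting $\varepsilon_0$ is likewise $\lambda$-independent.

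For (1), fix $u\in\overline{P}^+_\varepsilon$; every $j$ with $\rho_j(u)>0$ satisfies $u\in N_j\cap\overline{P}^+_\varepsilon$, so by the adapted choice $u_j\in\overline{P}^+_\varepsilon$ and Lemma~\ref{lem4.5} gives $A_\lambda(u_j)\in P^+_\varepsilon$; since $P^+_\varepsilon$ is convex, the convex combination $B_\lambda(u)=\sum_j\rho_j(u)A_\lambda(u_j)$ lies in $P^+_\varepsilon$, and likewise for $P^-_\varepsilon$. For (4), when $f$ is odd $A_\lambda$ is odd by Lemma~\ref{lem4.4}(iv); I would choose the covering and partition of unity symmetric under $u\mapsto-u$ (pairing $N_j\leftrightarrow-N_j$ with centers $u_j\leftrightarrow-u_j$), so that $B_\lambda(-u)=\sum_j\rho_j(-u)A_\lambda(u_j)=-\sum_j\rho_j(u)A_\lambda(u_j)=-B_\lambda(u)$.

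The main obstacle is the simultaneous bookkeeping in the choice of the covering: because $A_\lambda$ is merely continuous the patch radii cannot be uniform but must shrink according to the local values of $\|u-A_\lambda(u)\|_E$ and $1+C\|u\|_E^2$, and one must verify that a single refinement can be taken fine enough for (3), with centers respecting $\overline{P}^\pm_\varepsilon$ for (1) and symmetric for (4), all at once, while still producing a locally finite family whose associated $B_\lambda$ is locally Lipschitz. This is exactly where the paracompactness of $E_0$ and the $\lambda$-uniformity in Lemmas~\ref{lem4.4}(ii) and~\ref{lem4.5} are essential.
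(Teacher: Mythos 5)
Your overall route is exactly the one the paper takes: its ``proof'' of Lemma \ref{lem4.6} consists of repeating the construction of Lemma \ref{lem3.3}, which in turn follows Lemma 2.1 of \cite{Bartsch2005} --- a partition-of-unity approximation of the merely continuous operator, with Lemmas \ref{lem4.4} and \ref{lem4.5} playing the roles of Lemmas \ref{lem3.1} and \ref{lem3.2}. Your definition $B_\lambda(u)=\sum_j\rho_j(u)A_\lambda(u_j)$ on a locally finite covering of $E\setminus K_\lambda$, with patch radii calibrated to the two continuous, locally positive quantities $\|u-A_\lambda(u)\|_E$ and $1+C\|u\|_E^2$, and the symmetric pairing $N_j\leftrightarrow-N_j$ for oddness, is precisely that scheme; your verifications of (2), (3) and (4) are sound, modulo the routine step (which you acknowledge) of making $\|u_j-A_\lambda(u_j)\|_E$ and $1+C\|u_j\|_E^2$ comparable to their values at an arbitrary point of the patch.

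There is, however, one concrete defect, in your mechanism for (1). You stipulate that whenever a patch meets $\overline{P}^{+}_\varepsilon$ its center be chosen in $\overline{P}^{+}_\varepsilon$, and likewise for $\overline{P}^{-}_\varepsilon$. But a small patch near a small sign-changing function can meet both $\overline{P}^{+}_\varepsilon$ and $\overline{P}^{-}_\varepsilon$; your rule then forces the center into $\overline{P}^{+}_\varepsilon\cap\overline{P}^{-}_\varepsilon$, which need not intersect the patch at all, so the ``adapted choice'' is not always realizable --- and it also interferes with the symmetric pairing you impose for (4). The standard repair, and what the construction in \cite{Bartsch2005} actually exploits, is the quantitative content of Lemma \ref{lem4.5}: its proof yields the strict contraction $\textrm{dist}(A_\lambda(u),P^{\pm})\leq\frac{1}{2}\,\textrm{dist}(u,P^{\pm})$ for $\textrm{dist}(u,P^{\pm})$ small, not merely the inclusion $A_\lambda(\overline{P}^{\pm}_\varepsilon)\subset P^{\pm}_\varepsilon$. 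With this, no constraint on the centers is needed: if $\rho_j(u)>0$ for some $u\in\overline{P}^{+}_\varepsilon$, then $\textrm{dist}(u_j,P^{+})\leq\varepsilon+2r_j$, hence $\textrm{dist}(A_\lambda(u_j),P^{+})\leq\frac{1}{2}(\varepsilon+2r_j)<\varepsilon$ once $r_j$ is small enough, and since $u\mapsto\textrm{dist}(u,P^{+})$ is convex (distance to a convex cone), the convex combination satisfies $\textrm{dist}(B_\lambda(u),P^{+})<\varepsilon$; a patch meeting both cone neighborhoods then automatically sends its value $A_\lambda(u_j)$ into $P^{+}_\varepsilon\cap P^{-}_\varepsilon$, with no conflict. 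Note also that this contraction estimate, with constants independent of $\lambda$, is the true source of the $\lambda$-independence of $\varepsilon_0$, rather than Lemma \ref{lem4.4}(ii) as you suggest; the constant there only enters the fineness condition for property (3). With this single substitution your argument is complete and coincides with the paper's intended proof.
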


Now we are ready to prove Theorem \ref{thm1.2}.

\begin{proof}[Proof of Theorem \ref{thm1.2}]
	Set
	$$G_m:=\{h\in C(\mathcal{B}_{R_m}\cap E_m,E): h \textrm{\ is\ odd\ and\ } h=\text{id} \textrm{\ on\ }\partial\mathcal{B}_{R_m}\cap E_m\},$$
	where $R_m>0$ is given by Lemma \ref{lem4.2}. For $k\geq2$, we denote $\Gamma_k$ by
	$$\Gamma_k:=\{h(\mathcal{B}_{R_m}\cap E_m\setminus Y):h\in G_m, m\geq k, Y=-Y \textrm{\ is\ open\ and\ } \gamma(Y)\leq m-k\}.$$ With the help of Lemmas \ref{lem4.1}--\ref{lem4.6}, as in Section \ref{sec3}, for any fixed $\lambda\in(0,1]$, we can define
	a minimax value $c_{k}^\lambda$ for the functional $I_\lambda(u)$ as
	$$c^\lambda_{k}:=\inf_{A\in \Gamma_k}\sup_{A\cap Q}I_\lambda, \quad k=2,3,\cdots.$$
	Moreover, one can show that for all $k\geq2$ there exists $u_k^\lambda\in Q\cap K_{c_{k}^\lambda}$ and $$0<\alpha_\lambda\leq c^\lambda_{k}\to\infty,\quad \textrm{as\ }k\to\infty,$$
	where $\alpha_\lambda$ is defined in Lemma \ref{lem4.3}. Then, we have the following
	
	{\bf Claim:} {\it For any fixed $k\geq2$, the sequence $\{u_k^\lambda\}_{\lambda\in(0,1]}$ obtained above is bounded in $E$.}
	
	Indeed, notice that for any $\lambda\in(0,1]$, $I_\lambda(u)\leq I(u)$ for $u\in E$. Then, for any given $k\geq2$, by the definition of $c^\lambda_{k}$, we can obtain that
	\begin{equation}\label{eq4.1}
	c^\lambda_{k}\leq c(m):=\sup_{\mathcal{B}_{R_m}\cap E_m}I<+\infty, \quad \textrm{for\ all\ } \lambda\in(0,1],
	\end{equation}
	where $m\geq k$ is fixed and $R_m>0$ is given by Lemma \ref{lem4.2}. Moreover, we have
	\begin{equation}\label{eq4.2}
	\begin{split}
	\frac{a}{2}\int_{\mathbb{R}^3}|\nabla u_k^\lambda|^2&+\frac{b}{4}\left(\int_{\mathbb{R}^3}|\nabla u_k^\lambda|^2\right)^2+\frac{1}{2}\int_{\mathbb{R}^3}V(x)|u_k^\lambda|^2\\&-\int_{\mathbb{R}^3}F(u_k^\lambda)-\frac{\lambda}{r}\int_{\mathbb{R}^3}|u_k^\lambda|^r=c^\lambda_{k}
	\end{split}
	\end{equation}
	\begin{equation}\label{eq4.3}
	\begin{split}
	a\int_{\mathbb{R}^3}|\nabla u_k^\lambda|^2&+b\left(\int_{\mathbb{R}^3}|\nabla u_k^\lambda|^2\right)^2+\int_{\mathbb{R}^3}V(x)|u_k^\lambda|^2\\&-\int_{\mathbb{R}^3}f(u_k^\lambda)u_k^\lambda-\lambda\int_{\mathbb{R}^3}|u_k^\lambda|^r=0
	\end{split}
	\end{equation}
	and the Pohozaev type identity
	\begin{equation}\label{eq4.4}
	\begin{split}
	\frac{a}{2}\int_{\mathbb{R}^3}|\nabla u_k^\lambda|^2+\frac{3}{2}\int_{\mathbb{R}^3}V(x)|u_k^\lambda|^2&+\frac{1}{2}\int_{\mathbb{R}^3}(DV(x),x)|u_k^\lambda|^2+\frac{b}{2}\left(\int_{\mathbb{R}^3}|\nabla u_k^\lambda|^2\right)^2\\
	&\qquad\qquad-3\int_{\mathbb{R}^3}F(u_k^\lambda)-\frac{3\lambda}{r}\int_{\mathbb{R}^3}|u_k^\lambda|^r=0.
	\end{split}
	\end{equation}
	Multiplying \eqref{eq4.2} by $\mu+6$, \eqref{eq4.3} by $-1$ and \eqref{eq4.4} by $-2$ and adding them up, one concludes that
	\begin{equation*}
	\begin{split}
	\frac{\mu+2}{2}a&\int_{\mathbb{R}^3}|\nabla u_k^\lambda|^2+\frac{\mu-2}{4}b\left(\int_{\mathbb{R}^3}|\nabla u_k^\lambda|^2\right)^2\\&+\int_{\mathbb{R}^3}\left(\frac{\mu-2}{2}V(x)-(DV(x),x)\right)|u_k^\lambda|^2
	+\int_{\mathbb{R}^3}(f(u_k^\lambda)u_k^\lambda-\mu F(u_k^\lambda))\\
	&+\frac{r-\mu}{r}\lambda\int_{\mathbb{R}^3}|u_k^\lambda|^r=(\mu+6)c^\lambda_{k}.
	\end{split}
	\end{equation*}
	Thus, noting that $2<\mu\leq p<r$, by $(V_2)$, $(f_3)$ and \eqref{eq4.1}, we see that $\int_{\mathbb{R}^3}|\nabla u_k^\lambda|^2$ and $\lambda\int_{\mathbb{R}^3}|u_k^\lambda|^r$ are bounded uniformly in $\lambda\in(0,1]$. Using this fact, from $(f_3)$, \eqref{eq4.2} and \eqref{eq4.3}, we deduce that $\{u_k^\lambda\}_{\lambda\in(0,1]}$ is bounded in $E$ and hence the proof of the claim is finished.
	
	Then, up to a subsequence, we can assume that $u_k^\lambda\rightharpoonup u_k$ in $E$ as $\lambda\to0^+$ and
	$u_k^\lambda\rightarrow u_k$ in $L^s(\mathbb{R}^3)$ for any $s\in[2,6)$.
	Note that
	\begin{equation*}
	\begin{split}
	\langle I'_\lambda(u_k^\lambda)-I'(u_k),&u_k^\lambda-u_k\rangle=\|u_k^\lambda-u_k\|_E^2+b\int_{\mathbb{R}^3}|\nabla u_k^\lambda|^2\int_{\mathbb{R}^3}\nabla u_k^\lambda\cdot\nabla(u_k^\lambda-u_k)\\
	&\quad+b\int_{\mathbb{R}^3}|\nabla u_k|^2\int_{\mathbb{R}^3}\nabla u_k\cdot\nabla(u_k^\lambda-u_k)\\
	&\quad-\int_{\mathbb{R}^3}(f(u_k^\lambda)-f(u_k))(u_k^\lambda-u_k)-\lambda\int_{\mathbb{R}^3}|u_k^\lambda|^{r-2}u_k^\lambda(u_k^\lambda-u_k)\\
	&\geq\|u_k^\lambda-u_k\|_E^2+b\int_{\mathbb{R}^3}\left(|\nabla u_k^\lambda|^2-|\nabla u_k|^2\right)\int_{\mathbb{R}^3}\nabla u_k\cdot\nabla(u_k^\lambda-u_k)\\
	&\quad-\int_{\mathbb{R}^3}(f(u_k^\lambda)-f(u_k))(u_k^\lambda-u_k)-\lambda\int_{\mathbb{R}^3}|u_k^\lambda|^{r-2}u_k^\lambda(u_k^\lambda-u_k).
	\end{split}
	\end{equation*}
	Then, by $(f_1)$-$(f_2)$ and $I'_\lambda(u_k^\lambda)=0$, as in the proof of Lemma \ref{lem4.1}, we see that $\|u_k^\lambda-u_k\|_E^2\to0$ as $\lambda\to0^+$. Consequently, $I'(u_k)=0$ and $I(u_k)=c_k:=\lim_{\lambda\to0^+}c_\lambda^k$.
	
	We claim that $u_k$ is still sign-changing. Indeed,
	using the fact that\\ $\langle I_\lambda'(u_k^\lambda),u_k^{\lambda^\pm}\rangle=0$, we have
	\begin{equation*}
	\begin{split}\|u_{k}^{\lambda^{\pm}}\|_{E}^2\leq\|u_{k}^{\lambda^{\pm}}\|_{E}^2+b\int_{\mathbb{R}^3}|\nabla u_k^\lambda|^2\int_{\mathbb{R}^3}|\nabla u^{\lambda^\pm}_k|^2&=\int_{\mathbb{R}^3}f(u_k^{\lambda^\pm})u_k^{\lambda^\pm}+\lambda\int_{\mathbb{R}^3}|u_k^{\lambda^\pm}|^{r}\\
	&\leq\int_{\mathbb{R}^3}f(u_k^{\lambda^\pm})u_k^{\lambda^\pm}+\int_{\mathbb{R}^3}|u_k^{\lambda^\pm}|^{r},
	\end{split}
	\end{equation*}
	which, jointly with \eqref{eq3.0} and Sobolev embedding theorem, implies that $\|u_k^{\lambda^\pm}\|_{E}\geq \alpha_0>0,$ where $\alpha_0$ is a constant independent of $\lambda$. From this fact and $u_k^\lambda\to u_k$ in $E$, it is easy to see that $\|u_k^\pm\|_{E}\geq \alpha_0$. Therefore, $u$ is a sign-changing solution of \eqref{eq1}.
	
	Noting that $c_\lambda^k$ is nonincreasing in $\lambda$, we see that $c_k\geq c^\lambda_{k}$ for any $\lambda\in(0,1]$. Since $c^\lambda_{k}\to\infty$ as $k\to\infty$, one deduces that $$\lim_{k\to\infty}c_k=\infty.$$
	This implies that equation \eqref{eq1} admits infinitely many sign-changing solutions and therefore the proof is completed.
\end{proof}


\vspace{0.4cm}


\begin{thebibliography}{44}

\bibitem{Badiale2011}
\newblock M. Badiale, E. Serra,
\newblock \emph{Semilinear elliptic equations for beginners. Existence results via the variational approach},
\newblock Springer, London, 2011.

\bibitem{Bartsch2004}
\newblock T. Bartsch, Z. Liu and T. Weth,
\newblock \emph{Sign changing solutions of superlinear Schr\"{o}dinger equations,}
\newblock Commun. Partial Differential Equations, \textbf{29} (2004), 25--42.

\bibitem{Bartsch2005}
\newblock T. Bartsch, Z.L. Liu and T. Weth,
\newblock \emph{Nodal solutions of a $p$-Laplacian equation,}
\newblock Proc. Lond. Math. Soc., \textbf{91} (2005), 129--152.

\bibitem{Bartsch2001}
\newblock T. Bartsch, A. Pankov and Z.-Q. Wang,
\newblock \emph{Nonlinear Schr\"{o}dinger equations with steep potential well,}
\newblock Commun. Contemp. Math., \textbf{3} (2001), 1--21.

\bibitem{Bartsch1995}
\newblock T. Bartsch and Z.-Q. Wang,
\newblock \emph{Existence and multilicity results for some superlinear elliptic problems on $\mathbb{R}^N$,}
\newblock Commun. Partial Differential Equations, \textbf{20} (1995), 1725--1741.

\bibitem{Chen2011}
     \newblock C. Y. Chen, Y. C. Kuo and T. F. Wu,
     \newblock \emph{The Nehari manifold for a Kirchhoff type problem involving sign-changing weight functions},
     \newblock J. Differential Equations, \textbf{250} (2011), 1876--1908.

\bibitem{Deng2015}
     \newblock Y.B. Deng, S.J. Peng and W. Shuai,
     \newblock \emph{Existence and asymptotic behavior of nodal solutions for the Kirchhoff-type problems in $\mathbb{R}^3$},
     \newblock J. Func. Anal., \textbf{269} (2015), 3500--3527.

\bibitem{Figueiredo2014}
\newblock G.M. Figueiredo, N. Ikoma and J.R. Santos J\'{u}nior,
\newblock \emph{Existence and concentration result for the Kirchhoff type equations with general nonlinearities,}
\newblock Arch. Rational Mech. Anal., \textbf{213} (2014), 931--979.

\bibitem{He2012}
\newblock X.M. He and W.M. Zou,
\newblock \emph{Existence and concentration behavior of positive solutions for a Kirchhoff type equations in $\mathbb{R}^3$,}
\newblock J. Differential Equations, \textbf{252} (2012), 1813--1834.

\bibitem{He2015}
\newblock Y. He and G.B. Li,
\newblock \emph{Standing waves for a class of Kirchhoff type problems in $\mathbb{R}^3$ involving critical Sobolev exponents,}
\newblock Calc. Var. Partial Differ. Equ., \textbf{54} (2015), 3067--3106.

\bibitem{Huang2014}
\newblock Y.S. Huang and Z. Liu,
\newblock \emph{On a class of Kirchhoff type problems,}
\newblock Arch. Math., \textbf{102} (2014), 127--139.

\bibitem{Kirchhoff1883}
\newblock G. Kirchhoff,
\newblock Mechanik,
\newblock Teubner, Leipzig, 1883.

\bibitem{Li2014}
\newblock G.B. Li and H.Y. Ye,
\newblock \emph{Existence of positive ground state solutions for the nonlinear Kirchhoff type equations in $\mathbb{R}^3$,}
\newblock J. Differential Equations, \textbf{257} (2014), 566--600.

\bibitem{Li2015}
\newblock L. Li and J.J. Sun,
\newblock \emph{Existence and multiplicity of solutions for the Kirchhoff equations with asymptotically linear nonlinearities},
\newblock Nonlinear Anal. Real World Appl., \textbf{26} (2015), 391--399.

\bibitem{Sun2017}
\newblock L. Li, J.J. Sun and S. Tersian,
\newblock \emph{Infinitely many sign-changing solutions for the Br\'{e}zis-Nirenberg problem involving the fractional laplacian},
\newblock Fract. Calc. Appl. Anal., \textbf{20} (2017), 1146--1164.

\bibitem{Liy2012}
\newblock Y.H. Li, F.Y. Li and J.P. Shi,
\newblock \emph{Existence of a positive solution to Kirchhoff type problems without compactness conditions,}
\newblock J. Differential Equations, \textbf{253} (2012), 2285--2294.

\bibitem{Lions1977}
\newblock J.L. Lions,
\newblock On some questions in boundary value problems of mathematical physics,
\newblock in: Contemporary Developments in Continuum Mechanics and Partial Differential
	Equations, Proc. Internat. Sympos., Inst. Mat., Univ.
	Fed. Rio de Janeiro, Rio de Janeiro, 1977, in: North-Holland Math. Stud., vol. 30, North-Holland, Amsterdam, 1978, pp. 284--346.

\bibitem{Liuj2015}
\newblock J.Q. Liu, X.Q. Liu and Z.-Q. Wang,
\newblock \emph{Multiple mixed states of nodal solutions for nonlinear Schr\"{o}inger systems,}
\newblock Calc. Var. Partial Differ. Equ., \textbf{52} (2015), 565--586.

\bibitem{Liu2001}
\newblock Z.L. Liu and J.X. Sun,
\newblock \emph{Invariant sets of descending flow in critical point theory with applications to nonlinear differential equations,}
\newblock J. Differential Equations, \textbf{172}, (2001) 257--299.

\bibitem{Liu2005}
\newblock Z. L. Liu, F. A. van Heerden and Z.-Q. Wang,
\newblock \emph{Nodal type bound states of Schr\"{o}dinger equations via invariant set and minimax methods,}
\newblock J. Differential Equations, \textbf{214} (2005), 358--390.

\bibitem{Liu2015}
\newblock Z.L. Liu, Z.-Q. Wang and J.J. Zhang,
\newblock \emph{Infinitely many sign-changing solutions for the nonlinear Schr\"{o}dinger-Poisson system,}
\newblock Annali di Matematica, (2015), doi: 10.1007/s10231-015-0489-8.

\bibitem{Lius2015}
\newblock Z.S. Liu and S.J. Guo,
\newblock \emph{Existence and concentration of positive ground states for a Kirchhoff equation involving critical Sobolev exponent,}
\newblock Z. Angew. Math. Phys., \textbf{66} (2015), 747--769.

\bibitem{Lu2015}
\newblock S.S. Lu,
\newblock \emph{Signed and sign-changing solutions for a Kirchhoff-type equation in bounded domains,}
\newblock J. Math. Anal. Appl., \textbf{432} (2015), 965--982.

\bibitem{Ma2003}
     \newblock T. F. Ma and J. E. Mu\~{n}oz Rivera,
     \newblock \emph{Positive solutions for a nonlinear nonlocal elliptic transmission problem},
     \newblock Appl. Math. Lett., \textbf{16} (2003), 243--248.

\bibitem{Mao2009}
     \newblock A. M. Mao and Z. T. Zhang,
     \newblock \emph{Sign-changing and multiple solutions of Kirchhoff type problems without the P.S. condition},
     \newblock Nonlinear Anal., \textbf{70} (2009), 1275--1287.

\bibitem{Naimen2014}
     \newblock D. Naimen,
     \newblock \emph{The critical problem of Kirchhoff type elliptic equations in dimension four},
     \newblock J. Differential Equations, \textbf{257} (2014), 1168--1193.

\bibitem{Perera2006}
\newblock K. Perera and Z. T. Zhang,
\newblock \emph{Nontrivial solutions of Kirchhoff-type problems via the Yang index},
\newblock J. Differential Equations, \textbf{221} (2006), 246--255.

\bibitem{Rabinowitz1986}
\newblock P.H. Rabinowitz,
\newblock \emph{Minimax Methods in Critical Point Theory with Applications to Differential Equations},
\newblock CBMS Regional Conference Series in Mathematics, vol. \textbf{65}, American Mathematical Society, Providence, RI, 1986.

\bibitem{Shuai2015}
\newblock W. Shuai,
\newblock \emph{Sign-changing solutions for a class of Kirchhoff type problem in bounded domains},
\newblock J. Differential Equations, \textbf{259} (2015), 1256--1274.

\bibitem{Struwe2000}
\newblock M. Struwe,
\newblock \emph{``Variational Methods. Applications to Nonlinear Partial Differential Equations and Hamiltonian Systems"},
\newblock 3$^{rd}$ Edition, Springer--Verlag, Berlin, 2000.

\bibitem{Sun2014}
\newblock J.J. Sun and S.W. Ma,
\newblock \emph{Infinitely many sign-changing solutions for the Br\'{e}zis-Nirenberg problem},
\newblock Commun. Pure Appl. Anal., \textbf{13} (2014), 2317--2330.

\bibitem{Sun2011}
     \newblock J.J. Sun and C.L. Tang,
     \newblock \emph{Existence and multiplicity of solutions for Kirchhoff type equations},
     \newblock Nonlinear Anal., \textbf{74} (2011), 1212--1222.

\bibitem{Sun2012}
     \newblock J.J. Sun and C.L. Tang,
     \newblock \emph{Resonance problems for Kirchhoff type equations},
     \newblock Discrete Contin. Dyn. Syst., \textbf{33} (2013), 2139--2154.

\bibitem{Wang2012}
\newblock J. Wang, L.X. Tian, J.X. Xu and F.B. Zhang,
\newblock \emph{Multiplicity and concentration of positive solutions for a Kirchhoff type problem with critical growth},
\newblock J. Differential Equations, \textbf{253} (2012), 2314--2351.

\bibitem{Wu2011}
\newblock X. Wu,
\newblock \emph{Existence of nontrivial solutions and high energy solutions for
Schr\"{o}dinger-Kirchhoff-type equations in $\mathbb{R}^N$,}
\newblock Nonlinear Anal. Real World Appl., \textbf{12} (2011), 1278--1287.

\bibitem{Yao2016}
 \newblock X.Z. Yao and C.L. Mu,
     \newblock \emph{Infinitely many sign-changing solutions for Kirchhoff-type equations with power nonlinearity},
     \newblock Electron. J. Differential Equations, \textbf{59} (2016), 1--7.

\bibitem{Ye2015}
 \newblock H.Y. Ye,
     \newblock \emph{The existence of least energy nodal solutions for some class of Kirchhoff equations and Choquard equations in $\mathbb{R}^N$},
     \newblock J. Math. Anal. Appl., \textbf{431} (2015), 935--954.

\bibitem{Zhang2006}
     \newblock Z.T. Zhang and K. Perera,
     \newblock \emph{Sign changing solutions of Kirchhoff type problems via invariant sets of descent flow},
     \newblock J. Math. Anal. Appl., \textbf{317} (2006), 456--463.

\end{thebibliography}
\end{document}